\pgfplotsset{compat=1.16}
\DeclareMathOperator*{\argmax}{arg\,max} 
\newtheorem{thm}{Theorem}[section]
\newtheorem{cor}[thm]{Corollary}
\newtheorem{prop}[thm]{Proposition}
\newtheorem{lem}[thm]{Lemma}
\theoremstyle{definition}
\newtheorem{defn}[thm]{Definition}
\newtheorem{cnd}[thm]{Condition}
\newtheorem{rem}[thm]{Remark}
\newcommand{\lemref}[1]{Lemma~{\rm \ref{#1}}}
\newcommand{\cndref}[1]{Condition~{\rm \ref{#1}}}
\newcommand{\propref}[1]{Proposition~{\rm \ref{#1}}}
\newcommand{\defref}[1]{Definition~{\rm \ref{#1}}}
\newcommand{\remref}[1]{Remark~{\rm \ref{#1}}}
\newcommand{\sectref}[1]{Section~{\rm \ref{#1}}}
\makeatletter \@addtoreset{equation}{section}
\def\R{\ensuremath {\mathbb R}}
\newcommand{\EE}{\mathbb E}
\newcommand{\NN}{\mathbb N}
\newcommand{\RR}{\mathbb R}
\def\P{\ensuremath{\mathbb P}}
\newcommand{\F}{\mathcal F}
\newcommand{\I}{{\mathcal I}}
\newcommand{\A}{{\mathcal A}}
\newcommand{\AS}{\mathcal A_{\mathrm{Sing}}}
\newcommand{\AF}{\mathcal A_{\mathrm F}}
\newcommand{\AI}{\mathcal A_{\mathrm I}}
\newcommand{\B}{{\mathcal B}}
\newcommand{\cR}{\mathcal R}
\newcommand{\e}{\varepsilon}\newcommand{\eps}{\epsilon}
\newcommand{\id}{\mathfrak i}
\newcommand{\wdh}{\widehat}
\newcommand{\wdt}{\widetilde}
\newcommand{\lan}{\langle} \newcommand{\ran}{\rangle}
\renewcommand{\hat}{\widehat}
\newcommand{\E}{{\cal E}}
\newcommand{\comment}[1]{} 
\title{Long-Term Average Impulse and Singular Control of a Growth Model with Two Revenue Sources\thanks{This research was supported by the Simons Foundation under grant number 8035009.}}
\author[1]{K.L. Helmes}
\author[2]{R.H. Stockbridge}
\author[2]{C. Zhu}
\affil[1]{\small Institute for Operations Research, Humboldt University of Berlin, Spandauer Str. 1, 10178, Berlin, Germany, {\tt helmes@wiwi.hu-berlin.de}}
\affil[2]{Department of Mathematical Sciences,   University of Wisconsin-Milwaukee,   Milwaukee, WI 53201,   USA,   {\tt stockbri@uwm.edu}, {\tt zhu@uwm.edu}}
\date{\today}
\begin{document} 
\maketitle 
\begin{abstract}
This paper analyzes and explicitly solves a class of long-term average impulse control problems and a related class of singular control problems.  The underlying process is a general one-dimensional diffusion with appropriate boundary behavior.  The model is motivated by applications such as the optimal long-term management of renewable resources and financial portfolio management.  A large class of admissible policies is identified over which the agent seeks to maximize her long-term average reward, consisting of a running reward and income from either discrete impulses or singular actions.    In addition to the long-term average objective, we will briefly consider the long-term expected total reward functional and its relation to overtaking optimality.  Sensitivity analysis with regard to the parameters of the impulse control model are performed. Key connections between the impulse and singular control problems are displayed. 
\end{abstract}
%
{\bf 2020 Mathematics Subject Classification.} {93E20,  60H30, 60J60}  
 {\bf Keywords}. {Impulse control; singular control; long-term average reward; renewal theory; overtaking optimality; probabilistic cell problem; sensitivity.}
\maketitle
\section{Introduction} \label{sect-intro}
 
This paper considers and explicitly solves both a long-term average stochastic impulse control problem and a related singular control problem, with particular emphasis on impulse control.  Our motivation stems from two sources.  The first is the applications in natural resource management, specifically in the context of optimal and sustainable harvesting strategies.  The second is mathematical in nature.  It concerns  (a) the important but subtle interplay between two revenue streams, the incomes from a running reward and from intervention decisions, (b) the exploration of a direct approach to analyze such control problems over a large class of admissible policies using renewal theory and the renewal reward theorem, and (c) ways in which the impulse and singular problems are related.   An additional motivation arises from the connection of the notion of optimality for an impulse problem with accumulated rewards to the concept of overtaking policies and their relation to an optimal policy for the long-term average problem.  

Let us now introduce the underlying uncontrolled process.  In the absence of controls, the dynamics of the state process -- which may describe the evolution of some renewable natural resource  -- is modeled by  a one-dimensional diffusion process  on an interval $\I \subset \RR$  
\begin{equation} \label{e:X0}
d X_{0}(t) = \mu(X_{0}(t))\, dt +\sigma(X_{0}(t))\, dW(t), \ \ X_{0}(0) = x_{0},
\end{equation}  
where $x_{0}\in \I$ is an arbitrary but fixed point throughout the paper,  $W$ is a one-dimensional standard Brownian motion, and the drift and diffusion are given by the functions $\mu$ and $\sigma$, respectively.  The diffusion process is assumed to have certain boundary behaviors,   which are consistent with growth models arising from important applications; see \cndref{diff-cnd} for details.

For the impulse control problems under consideration, the decision maker wants to specify when and by how much the state of the process should be reduced to achieve economic benefits.  Her strategy is modeled by a policy $R : =\{(\tau_{k}, Y_{k}), k =1,2,\dots\}$ such that for each $k\in \NN$,  $\tau_{k}$ is the time of the $k$th intervention and $Y_{k}$ is the size of the intervention.  The resulting  controlled process $X^{R}$ satisfies  
\begin{equation} \label{e:X} 
X^{R}(t) =x_{0} + \int_{0}^{t} \mu(X^{R}(s))\, ds + \int_{0}^{t} \sigma(X^{R}(s))\, dW(s) - \sum_{k=1}^{\infty}I_{\{\tau_{k} \le t\}} Y_{k},  \qquad t\ge 0,
\end{equation} 
where we set $\tau_{0} =0$ and $ X^{R}(0-) = x_{0} \in \I$.  In addition, since this paper is concerned with long-term average problems,  we restrict ourselves to policies  with  $\lim_{k\to\infty}\tau_{k} = \infty$ a.s.

As well as earning income at discrete times when the state is reduced for economic gain, the manager continually receives a subsidy based on the state of the process.  Let $c$ be a running reward function and $\gamma > 0$ a scaling parameter.  The function $\gamma c > 0$ gives the net reward after the associated running costs are taken into account.  Let $p_1>0$ be the gross price received per unit when a reduction in the state process is initiated.  We assume that the costs for instantaneous reductions consist of both a fixed amount $K > 0$ for each action and a proportional cost with parameter $q_1 < p_1$.  Set $p = p_1 - q_1 > 0$ to be the net price per unit of reduction due to an intervention.  Thus the reward functional for a policy $R$ is the expected long-term average profit:
\begin{equation}\label{e:reward-fn} 
J(R; p, K, \gamma) := J(R) := \liminf_{t\to\infty}  t^{-1} \EE \left[\int_{0}^{t} \gamma c(X^{R}(s))\, ds +  \sum_{k=1}^{\infty} I_{\{\tau_{k} \le t\}} (p Y_k - K)\right].
\end{equation} 
In the context of  harvesting problems, the function $\gamma c$ can represent the utility derived from maintaining desirable state values $X^{R}(s)$  at time $s$, as well as the state's contribution to the overall ecosystem's stability.  For example, the function $\gamma c$ can be used to model running carbon credits for managing large tracts of forest. 
The positive fixed cost for each intervention  in \eqref{e:reward-fn} implies that the optimal policy involves discrete interventions rather than continuous adjustments, ensuring effective product management while maximizing  the overall profit rate.

Due to the presence of the nonnegative running reward rate $c$, the interplay between $c$ and the mean production/growth rate $\mu$ is one of the essential and important   features of
the model and requires careful analysis.  Although the case of a negative function $c$ is typical in applications such as  inventory control and industrial animal husbandry, a negative running or holding cost term in fact simplifies both the analysis and the characterization of optimal controls near the right boundary of the state space. Specifically, a negative $c$ prizes interventions that keep the controlled process away from the right boundary, thereby avoiding complications associated with boundary behavior of the underlying diffusion. By contrast, a nonnegative running reward function $c$ may encourage the manager to allow the process to approach the right boundary. 

Although the impulse control model considered here resembles those analyzed in our prior work (e.g., \cite{helm:18,HelmSZ-25}), a crucial difference exists: we incorporate a positive running reward rather than a negative holding cost. This distinction, particularly when combined with an attracting natural right boundary, necessitates a careful re-examination of several fundamental results.  Specifically, we identify  conditions on the model under which a strategy analogous  to an $(s, S)$-type ordering policy in the context of inventory control is optimal and adopt further conditions that guarantee the uniqueness of such a policy. The details are in Propositions \ref{prop-optimal-control} and \ref{prop-sec3-uni+existence}.  (In inventory theory, $s$ is the threshold at which orders are placed to raise the inventory to $S$; in this paper, $S$ is the threshold at which the state is lowered to the reset level $s$.  We adopt the notation $(s,S)=(w,y)$ for this type of policy, see \defref{thresholds-policy}.)

Turning to the singular control problem, as before, the underlying uncontrolled process $X_0$ satisfies \eqref{e:X0}.  A (singular) control $Z$ is a nondecreasing and right continuous process, with the resulting state process $X^Z$ satisfying 
 \begin{equation}\label{e:SDE-XZ}
 X^{Z}(t) = x_0 + \int_0^t \mu(X^{Z}(s))\, ds + \int_0^t \sigma(X^{Z}(s)) dW(s) - Z(t), \quad t\ge 0. 
 \end{equation}
The long-term average reward associated with the singular control $Z$ is  
 \begin{equation}\label{e:singular-reward}
 \wdh J(Z; p,\gamma) := \wdh J(Z) := \liminf_{t\to\infty} t^{-1} \EE_{x_0}\left[\int_{0}^{t} \gamma\, c(X^{Z}(s))\, ds + p\, Z(t)\right].
 \end{equation}

Observe that for a given impulse policy $R =\{(\tau_{k}, Y_{k}), k =1,2,\dots\}$, we may define the nondecreasing, right continuous process $Z$ by
$$Z(t) = \sum_{k=1}^\infty I_{\{ \tau_k \leq t\}} Y_k, \qquad t \geq 0,$$
in which case the two equations \eqref{e:X} and \eqref{e:SDE-XZ} describe the same controlled process $X^R = X^Z$.  Thus the collection of singular policies contains the impulse policies.

Now briefly compare the singular long-term average payoff rate \eqref{e:singular-reward} with the similar quantity \eqref{e:reward-fn} for the impulse control problem for a given impulse policy $R$.  Both problems have the same running reward.  For each $k \in \NN$, when the nondecreasing process $Z$ has a jump discontinuity at time $\tau_k$, $\Delta Z(\tau_k):= Z(\tau_k)- Z(\tau_k-) = Y_k$ so the corresponding revenue generated at time $\tau_k$ by the singular problem is $p\, Y_k$ whereas for the same intervention the impulse problem also incurs the positive fixed cost $K$ so generates the smaller revenue $p\, Y_k - K$.  Intuitively, one can regard the singular control problem \eqref{e:singular-reward} as a limiting case of the impulse control problem \eqref{e:reward-fn} when the fixed cost $K$ approaches zero.  This intuition will be made precise and rigorously established in Section \ref{sect-imp_sing}.

To avoid penury, the impulse control problem with a fixed cost $K>0$ avoids policies with infinitely many interventions in any finite time interval.  In contrast, with $K=0$,  the singular control problem allows for continuous control such as a local time process that reflects the state to prevent it from growing too large. The term $p Z(t)$ then includes the total revenue earned from these infinitesimal reductions (up to time $t$) which result in the reflection of $X^Z$.

 Since the seminal work by \cite{BensL-75} and the monograph \cite{BensL-84}, which lays the mathematical foundations of impulse control theory, stochastic impulse control has been extensively studied. Notable developments include investigations into the impulse control of one-dimensional Itô diffusions (\cite{AlvaL:08, JackZ-06}), applications in inventory management (\cite{HelmSZ-17, helm:18, HelmSZ-25, YaoCW-15, HeYZ-17}), and various financial models (\cite{EastH-88, Korn-99, Cadenillas-06}). 

Stochastic singular control has an even  longer history, dating back to the pioneering work of \cite{BatherC-67,BatherC:67}, which inspired subsequent   research  such as \cite{Alvarez,BenesSW-80,Kara-83,Weera-02,GuoP-05,Shreve-88,Weerasinghe-07,JackZ-06}, among others.  
 More recent developments include applications in  reversible investment (\cite{DeAngelisF-14}), optimal harvesting and renewing (\cite{HeniT-20,HeniTPY-19}), and dividend payment and capital injection  (\cite{JinYY-13,LindL-20}). We also refer to  \cite{ChrisMO-25,LiangLZ-25,Hynd-13,KunwXYZ-22,MenaR-13,WuChen-17} for developments on ergodic singular control problems in various settings.
 
The rest of the paper is organized as follows.  Section  \ref{sect-formulation}  gives the precise model formulations and collects the key conditions used in later sections.  In particular, a large class of admissible policies is defined by requiring a weak transversality condition to be satisfied, based on the fundamental quantities of the underlying model.  
Section~\ref{sect-preliminaries} introduces the class of $(w,y)$-impulse policies.  
Using a renewal argument for the impulse control problem, we then express the corresponding payoff \eqref{e:reward-fn} as the nonlinear function $F$ defined in \eqref{e:F_K} and derive the analytical form of the controlled process's stationary density in \eqref{e:nu_density-wy}.  Of particular significance, Section~\ref{sect-preliminaries} also analyzes the asymptotic behavior of various components of the function $F$, which will be used in the maximization of $F$, a fractional optimization problem.  

The long-term average optimal impulse control problem is addressed in Section~\ref{sect:classical impulse}. Conditions are given for the existence and uniqueness of an optimal $(w,y)$-policy.  Furthermore, it is shown that the ``impulse reward potential'' function $G$ (defined up to a constant in \eqref{e:G_p}) along with the maximal value $F^*$  is a solution of the associated system of quasi-variational inequalities \eqref{e:qvi}. This, in turn, establishes the optimality of the corresponding $(w,y)$-policy.   The potential $G$ is then used to define the natural solution $\wdt G$ of the system \eqref{e:qvi}.  A brief digression examines the growth of the long-term expected total reward.  A specific shift of $\blue \wdt G$ is interpreted as the value function of an infinite-horizon growth rate optimization problem.  The section concludes by relating the optimal $(w,y)$-policy to the concept of an optimal overtaking policy.  

Using the framework established in \sectref{sect:classical impulse}, the singular control problem is defined and quickly dispensed with in \sectref{sect:singular}.  \sectref{sect:sensitivity} examines the sensitivity of the solutions for the impulse control problem to changes in the model parameters $(p, K, \gamma)$. Section \ref{sect-imp_sing} discusses the intrinsic relationship between the impulse control and singular control problems. In particular, the first-order optimality conditions for the impulse problem lead to the asymptotic result that the singular control solution emerges as the limit of the impulse solutions as the fixed cost $K$ converges to $0$.   Surprisingly, the reduction in the long-term average reward rate due to the positive fixed cost $K$ is quantified by a comparison of the values corresponding to two reflection policies; see \remref{quantifying-fixed-cost}.  The paper concludes with a summary of findings and final remarks in Section \ref{sect-conclusion}.  Some technical but subsidiary proofs of Sections \ref{sect-formulation}, \ref{sect-preliminaries}, and \ref{sect:classical impulse} are arranged in Appendix \ref{Appen-proofs-sect2}.

Throughout the paper, we use the notation that $\lan f,\pi \ran : = \int f d\pi$ if $f$ is a function and $\pi$ is a measure, as long as the integral $ \int f d\pi$ is well-defined.  The indicator function of a set $A$ is denoted  by $I_{A}$.

\section{Formulation}\label{sect-formulation} 

In this section, we establish the models under consideration and collect some key conditions that will be used in later sections of the paper.
\medskip

\noindent
{\bf \em Dynamics.}\/ 
Let $\I: = (a, b) \subset \RR$ with $a> -\infty$ and $b\le \infty$. In the absence of interventions, the process $X_{0}$ satisfies \eqref{e:X0} and is a regular diffusion with state space $\I$.   The measurable functions $\mu$ and $\sigma $ are assumed to be such that a unique non-explosive weak solution to  \eqref{e:X0} exists; we refer to Section 5.5 of \cite{Karatzas-S} for details.   For simplicity, we assume $(\Omega, \F, \{\F_{t}\}, \P)$ is  a filtered probability space with an $\{\F_{t}\}$-adapted Brownian motion $W$ and on which $X_0$ is defined, as well as each controlled process. In addition, we assume that  $\sigma^{2}(x) > 0$ for all $x \in \I$.  We closely follow the notation and terminology on boundary classifications of one-dimensional diffusions in Chapter 15 of \cite{KarlinT81}.  The following standing assumption is imposed on the model throughout the paper:

\begin{cnd} \label{diff-cnd}
\begin{itemize}
\item[(a)] Both the speed measure $M$ and the scale function $S$ of the process $X_0$ are absolutely continuous with respect to Lebesgue measure. The scale and speed densities, respectively, are given by 
\begin{equation}
\label{e:s-m}
s(x) : = \exp\left\{-\int_{x_{0}}^{x} \frac{2\mu(y)}{\sigma^{2}(y)}dy \right\}, \quad m(x) = \frac{2}{\sigma^{2}(x) s(x)}, \quad x\in (a,b),
\end{equation} where $x_{0}\in \I$ is as in \eqref{e:X0} and is an arbitrary point, which will be held fixed.   (Note we follow the convention of using the factor $2$ in the definition of $m$.)
\item[(b)]  The left boundary $a > -\infty$ is a non-attracting point and the right boundary $b \le \infty$ is a natural point. 
 Moreover,  
 \begin{equation} \label{e:M(a-y)-finite}
  M[a, y] < \infty \text{ for each }y \in \I, 
\end{equation} 
and the (hitting-time) potential function $\xi$ defined by  
\begin{align}\label{e-xi}
 \xi(x) :=  \int_{x_{0}}^{x} M[a, v] dS(v), \quad  x\in \I
\end{align}  
satisfies
\begin{equation}\label{e-sM-infty}
    \lim_{x\to b} \xi'(x) = \lim_{x\to b} s(x) M[a,x] =\infty.
  \end{equation}
\item[(c)] The function $\mu$ is continuous on $\I$ and extends continuously to the boundary points  with $|\mu(a)| < \infty$.
\end{itemize}
\end{cnd} 

 Condition~\ref{diff-cnd}(a) places restrictions on the model \eqref{e:X0} which are quite natural for harvesting problems and other applications, such as in mathematical finance.
 The assumption that $a> -\infty$ is a non-attracting point implies that it cannot be attained in finite time by the uncontrolled diffusion.  For growth models  with $a=0$, this condition excludes the possibility of extinction. Likewise,  $b\le \infty$ being a natural boundary prevents the state from exploding to $b$ in finite time.  Note that $a$ can be either an entrance point or a natural point; the state space for $X_{0}$ is respectively $\E = [a, b)$ or $\E = (a, b)$.    
 
\cndref{diff-cnd}(b,c) imposes further limitations on the model.  For instance, the assumption that $|\mu(a)| < \infty$ excludes the consideration of Bessel processes.  In addition, the finiteness condition \eqref{e:M(a-y)-finite} always holds when $a$ is an entrance boundary, and implies that the expected hitting times from $w$ to $y$ are finite whenever $a<w<y<b$.   This requirement eliminates some diffusions when $a$ is natural;  see Table~6.2 on p.~234 of \cite{KarlinT81}. 

The next proposition highlights implications of Condition~\ref{diff-cnd} and introduces the useful identity \eqref{e:1/s-identity}, which will be crucial for  both theoretical and numerical calculations.
The proof of Proposition \ref{lem-mu-at-b} is placed in Appendix~\ref{appen-pfs-sec2}  to preserve the flow of the presentation.

\begin{prop}\label{lem-mu-at-b} Assume Condition \ref{diff-cnd} holds.   Then
\begin{itemize}
  \item[(i)] $\mu(a) \geq 0$. Moreover, if  $\mu(a) > 0$ then  $a$ is an entrance point;
  \item[(ii)] for each $x \in \I$, 
\begin{equation}
\label{e:1/s-identity}
 \int_{a}^{x} \mu(u) m(u)\, du =  \int_{a}^{x} \mu(u) dM( u) = \frac1{s(x)};
\end{equation} 

  \item[(iii)] if  $M[a, b] < \infty$, then  $\langle \mu, \pi \rangle = 0$ in which $ \pi(du) = \frac{dM(u)}{M[a, b]}$;
  \item[(iv)] if $M[a, b] = \infty$,    
   then $\mu(b) = 0$.
\end{itemize}
\end{prop}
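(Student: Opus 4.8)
The workhorse is part (ii), so I would prove it first. Since $\mu$ is continuous and $\sigma^2>0$, the scale density in \eqref{e:s-m} is $C^1$ with $s'(u)=-\tfrac{2\mu(u)}{\sigma^2(u)}s(u)$, so $1/s$ is $C^1$ on $\I$ with $\tfrac{d}{du}\big(1/s(u)\big)=-s'(u)/s(u)^2=\tfrac{2\mu(u)}{\sigma^2(u)s(u)}=\mu(u)m(u)$. Because $M$ has density $m$ (Condition~\ref{diff-cnd}(a)) and $\mu$ is bounded on the compact set $[a,x]$ for each $x\in\I$ (Condition~\ref{diff-cnd}(c)), we get $\int_a^x|\mu|\,dM\le\big(\sup_{[a,x]}|\mu|\big)M[a,x]<\infty$ by \eqref{e:M(a-y)-finite}, so $\int_a^x\mu\,dM$ is an absolutely convergent Lebesgue integral. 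Integrating the derivative identity over $[x',x]$ and letting $x'\downarrow a$ then shows that $\ell:=\lim_{x'\downarrow a}1/s(x')$ exists and lies in $[0,\infty)$, and that $\int_a^x\mu\,dM=\tfrac1{s(x)}-\ell$. Finally $\ell=0$, for otherwise $s$ would be bounded on a right neighbourhood of $a$, forcing $\int_a^{x_0}s(y)\,dy<\infty$ and contradicting the hypothesis that $a$ is non-attracting. This gives \eqref{e:1/s-identity}.

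For part (i) I would exploit that $1/s(x)=\int_a^x\mu\,dM$ is strictly positive with $m>0$. If $\mu(a)<0$ then $\mu<0$ on some $(a,a+\delta)$, making this integral negative for $x\in(a,a+\delta)$; impossible, so $\mu(a)\ge0$. If $\mu(a)>0$, fix $\delta$ with $\mu\ge\mu(a)/2$ on $(a,a+\delta)$; then \eqref{e:1/s-identity} yields $1/s(x)\ge\tfrac{\mu(a)}2 M[a,x]$, i.e.\ $s(x)M[a,x]\le 2/\mu(a)$ on $(a,a+\delta)$. Since $v\mapsto s(v)M[a,v]$ is also bounded on $[a+\delta,x_0]$ (being continuous), $\int_a^{x_0}M[a,v]\,dS(v)<\infty$, i.e.\ $\xi$ has a finite limit at $a$. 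A non-attracting left boundary is inaccessible, hence entrance or natural, and it is natural precisely when this integral diverges; see Chapter~15 of \cite{KarlinT81}. Therefore $a$ is an entrance point.

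Parts (iii) and (iv) come from letting $x\to b$ in \eqref{e:1/s-identity}. If $M[a,b]<\infty$, then $M[a,x]\to M[a,b]\in(0,\infty)$, so \eqref{e-sM-infty} forces $s(x)\to\infty$, hence $1/s(x)\to0$; as $\mu$ is bounded and $M[a,b]<\infty$, dominated convergence gives $\langle\mu,\pi\rangle=\tfrac1{M[a,b]}\int_a^b\mu\,dM=\tfrac1{M[a,b]}\lim_{x\to b}\tfrac1{s(x)}=0$. If $M[a,b]=\infty$, put $L:=\mu(b)=\lim_{x\to b}\mu(x)\in[-\infty,\infty]$ (which exists by Condition~\ref{diff-cnd}(c)) and note that \eqref{e:1/s-identity}, applied at $x$ and at $b-\delta$, gives $\tfrac1{s(x)}=\tfrac1{s(b-\delta)}+\int_{b-\delta}^x\mu\,dM$ for $b-\delta<x<b$. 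If $L<0$ then $\mu\le-c<0$ near $b$, so $\tfrac1{s(x)}\le\tfrac1{s(b-\delta)}-c\big(M[a,x]-M[a,b-\delta]\big)\to-\infty$ as $x\to b$ (using $M[a,x]\to\infty$), contradicting $1/s(x)>0$. If $L>0$ then $\mu\ge c>0$ near $b$, so $1/s(x)\ge c\,M[a,x]-C$ for a constant $C$ and all $x$ near $b$, whence $s(x)M[a,x]\le M[a,x]/(c\,M[a,x]-C)\to1/c<\infty$, contradicting \eqref{e-sM-infty}. Hence $L=0$.

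The step I expect to be the main obstacle is the boundary-classification argument inside (i): one must invoke the exact Feller criterion that, for an inaccessible left boundary, the entrance/natural dichotomy is governed by finiteness of $\int_a^{x_0}M[a,v]\,dS(v)$ (equivalently, of $\lim_{x\downarrow a}\xi(x)$), so that the bound $s(v)M[a,v]\le 2/\mu(a)$ extracted from \eqref{e:1/s-identity} actually certifies entrance and not merely inaccessibility. The other two delicate points — evaluating $\lim_{x\downarrow a}1/s(x)=0$ in (ii) and $\lim_{x\to b}s(x)=\infty$ in (iii) — are secured, respectively, by the absolute-integrability estimate near $a$ together with non-attraction, and by \eqref{e-sM-infty} together with $M[a,b]<\infty$, and need no further hypotheses.
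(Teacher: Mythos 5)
Your overall strategy is the same as the paper's: differentiate $1/s$ to get $\tfrac{d}{du}(1/s(u))=\mu(u)m(u)$, integrate and send the lower limit to $a$ (using non-attraction to kill the constant of integration), and then read off (i), (iii), (iv) from the behavior of $1/s(x)=\int_a^x\mu\,dM$ at the two boundaries. Your treatment of (i) via the lower bound $s(x)M[a,x]\le 2/\mu(a)$ is a clean substitute for the paper's L'H\^opital computation $\lim_{x\downarrow a}s(x)M(a,x]=1/\mu(a)$, and your part (iv) is actually \emph{more} direct than the paper's: you rule out $\mu(b)\ne 0$ by a sign argument on $\int_{b-\delta}^x\mu\,dM$, whereas the paper first proves finiteness of $\mu(b)$ by contradiction and then invokes weak convergence of the truncated measures $\pi_x$ to $\delta_b$. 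Both routes are valid there.

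There is, however, a genuine gap in part (iii). You assert that ``$\mu$ is bounded'' and apply dominated convergence to get $\int_a^b\mu\,dM=\lim_{x\to b}1/s(x)=0$. Condition~\ref{diff-cnd}(c) only guarantees $|\mu(a)|<\infty$; the continuous extension at $b$ is allowed to take the values $\pm\infty$, and the paper's proof of (iii) is devoted precisely to the cases $\mu(b)=-\infty$ and $\mu(b)=+\infty$ (which are compatible with $M[a,b]<\infty$ if $m$ decays fast enough near $b$). When $\mu$ is unbounded near $b$ your dominating function does not exist, so the step as written fails. The fix is short: your identity gives $\int_{y_0}^x\mu\,dM=1/s(x)-1/s(y_0)\to -1/s(y_0)$ as $x\to b$, and choosing $y_0$ so that $\mu$ has constant sign on $(y_0,b)$ (possible by the continuous extension), monotone convergence shows $\mu$ is $M$-integrable on $(y_0,b)$ and that $\int_a^b\mu\,dM=\lim_{x\to b}1/s(x)=0$; alternatively one argues by cases as the paper does. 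With that repair, part (iii) and the rest of your proof are correct.
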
 

\propref{lem-mu-at-b}(iii) and (iv) have an intuitive interpretation.  In (iii), $\pi$ is the stationary distribution for the uncontrolled process $X_0$ so this result means that the mean drift rate under $\pi$ is $0$, which accords with the notion of stationarity.  Somewhat similarly, the ``stationary'' distribution in (iv) is a unit point mass at $b$ and again the result indicates the (degenerate) mean drift is $0$.

\medskip
We now specify the class of admissible policies for the impulse control problem which, apart from the transversality condition when $a$ is a natural boundary, is quite standard.  

\begin{defn}[Impulse Admissibility]  \label{admissible-policy}
We say that $R : =\{(\tau_{k}, Y_{k}), k =1,2,\dots\}$ is an {\em admissible impulse policy}\/ if 
\begin{enumerate}
  \item[(i)] $\{\tau_{k}\}$ is an increasing sequence of $\{\F_{t}\}$-stopping times with $\lim_{k\to\infty} \tau_{k} =\infty$, 
  \item[(ii)]  for each $k\in \NN$,  $Y_{k}$ is $\F_{\tau_{k}}$-measurable with $0 < Y_k \leq X^R(\tau_k-) - a$ when $\tau_k < \infty$, where equality is only allowed when $a$ is an entrance boundary; 
  \item[(iii)] $X^R$ satisfies \eqref{e:X} and we set $\tau_{0} =0$ and $X^{R}(0-) = x_{0}\in \I$; and  
 \item[(iv)]  {\em if} $a$ is a natural boundary, either 
 \begin{enumerate}
 \item[(a)] there exists an $N \in \mathbb N$ such that $\tau_N =\infty$, which implies $\tau_k = \infty$ for all $k \geq N$ and, to completely specify the policy, we set $Y_k = 0$ for all $k \ge N$; or
 
 \item[(b)] $\tau_k < \infty$ for each $k \in \mathbb N$ and, for the function $\xi$  defined in \eqref{e-xi}, it holds that 
 \begin{equation} 
   \label{eq-xi-transversality}
    \lim_{t\to \infty} \limsup_{n\to \infty}t^{-1} \EE[\xi^{-}(X^R(t\wedge \beta_{n}) )]  = 0,
   \end{equation}
 where $\xi^-$ denotes the negative part of the  time potential $\xi$, and  for each $n\in \NN$,  $\beta_n := \inf\{t  \geq 0: X^R(t) \notin( a_n,  b_n)\}$, in which   $\{a_n\}\subset \I $ is a decreasing sequence with $a_n \to a$ and $\{b_n\}\subset \I $ is an increasing sequence with $b_n \to b$.  
\end{enumerate} 
\end{enumerate} 
 We denote by $\A_{\rm Imp}$ the set of admissible impulse strategies. 
\end{defn}


\begin{rem} \label{rem-admissible-policy} 
 An admissible impulse policy $R$ satisfying Definition \ref{admissible-policy} (i), (ii), (iii), and (iv)(a) has a finite number of  interventions or no intervention (corresponding to the case when $\tau_1 =\infty$); the latter  is  called a ``do-nothing'' policy and is denoted by $\mathfrak R$.  For convenience of later presentation, we denote by $\AF$ the set of admissible policies with  at most a finite number of interventions.   
 
 On the other hand, if $R\in \A_{\rm Imp}$ satisfies Definition \ref{admissible-policy} (i), (ii), (iii), and (iv)(b), the number of interventions is infinite; the set of such  policies is denoted by $\AI$. We have 
 $$\A_{\rm Imp} =  \AF \cup \AI \quad \text{  and }\quad  \AF \cap \AI = \emptyset.$$  
 Note that \eqref{eq-xi-transversality} is a transversality condition {\em imposed only on diffusions for which $a$ is a natural boundary.}  It is satisfied by the $(w,y)$-policies defined in \defref{thresholds-policy}; see  \lemref{lem-(wy)admissible} for details.  When $a$ is an entrance boundary, \eqref{eq-xi-transversality} is automatically satisfied because $\xi$ is bounded below; see the proof of Lemma \ref{lem-Gp-limit} for details. Finally we point out that the $\{ \beta_n\}$ sequence in \eqref{eq-xi-transversality} satisfies   $\lim_{n\to\infty} \beta_{n} =\infty$ a.s. since  $a$ is non-attracting and $b$ is natural thanks to Condition \ref{diff-cnd}.  

  We emphasize that the admissible impulse policies defined in Definition \ref{admissible-policy} are not required to be of any particular type, such as a $(w,y)$-policy or a stationary policy.  For example, while the class of $(w,y)$-policies belongs to the admissible set  $\A_{\rm Imp}$, nonstationary policies that alternate  between a finite number of such policies are also admissible.  Requiring transversality in \defref{admissible-policy}(iv)(b) for models in which $a$ is a natural boundary is a weak condition that allows for a large class of admissible policies.
  \end{rem}

\medskip

We now turn to the formulation of the rewards.
\medskip

\noindent
{\bf \em Reward Structures.}\/  A running reward is earned at rate $\gamma\, c$, in which $c$ depends on the state of the process and $\gamma > 0$ is a scale parameter.  

For the impulse control problem, the reward is proportional to the size of the intervention,  with the unit price $p = p_1-q_1 > 0$ being the net price as described in the introduction.  Each intervention also incurs a fixed cost $K> 0$. Consequently, given the parameters $p$, $K$ and $\gamma$, the long-term average reward for the product manager who adopts the strategy $R\in \A_{\rm Imp}$ is given by \eqref{e:reward-fn}.

We assume that $c$ satisfies the following condition.

\begin{cnd} \label{c-cond}  
The function $c:\I\mapsto \R_{+}$ is continuous,     increasing, and extends continuously at the endpoints, with $0 \le c(a) < c(b) < \infty$.  
\end{cnd}

{  
We now introduce an essential condition that connects the mean growth rate $\mu$ to the subsidy function $c$.   It is the interplay between the subsidy and growth rates that gives the uniqueness result in \propref{prop-sec3-uni+existence}.  For the given parameters $\gamma$ and $p$, define the reward rate function by
\begin{equation} \label{e:r_p}
r(x; \gamma, p) = r(x) := \gamma c(x) + p \mu(x), \qquad x\in \I.
\end{equation}
The summand $\gamma c(x)$ gives the subsidy rate when the process is in state $x$ while $p \mu(x)$ gives the mean growth rate of the value of the product as the process traverses $x$ (cf. \eqref{e:mu-tauy-mean}).

\begin{cnd} \label{3.9-suff-cnd}
There exist $\wdh x, \wdt x\in \I$ with $\wdh x \le \wdt x$ so that the function $r$ of \eqref{e:r_p} is strictly increasing on $(a, \wdh x)$, constant on $(\wdh x, \wdt x)$,   and strictly   decreasing on $(\wdt x, b)$.
\end{cnd}

\begin{rem}
With a view towards a mean field game problem in which the price depends on the mean field interactions and is not a fixed parameter, it is shown in \cite{HelmSZ:25} that a sufficient condition on $\mu$ and $c$ such that this condition is satisfied for every pair $(\gamma, p)$ is that there exists some $\hat x_{\mu,c} \in \I$ so that $\mu$ is strictly increasing on $(a, \hat x_{\mu,c})$ and the functions $c$ and $\mu$ are concave on $(\hat x_{\mu,c}, b)$.
\end{rem}
}

 It is also worth pointing out that the long-term average reward for every $R\in  \AF$ is equal to that of the do-nothing policy  $\mathfrak R$, which will be shown to be equal to ${   \gamma \bar c(b)}$ in \eqref{e:reward-zero-policy}. 

To have a meaningful impulse problem, we require that the parameters $p$, $K$ and $\gamma$ be such that it is always beneficial to intervene with an impulse as compared to the do-nothing policy; \cndref{cond-interior-max}  captures this requirement in a functional form.

Now define the operators $A$ and $B$ that will be used often in the sequel.  The generator of the process $X^{R}$ between jumps (corresponding to the uncontrolled diffusion process $X_{0}$) is 
\begin{equation} \label{generator}
A f:= \frac12 \sigma^{2} f'' + \mu f' = \frac12 \frac{d}{dM}(\frac{df}{dS}),
\end{equation}  
where $f \in C^{2}(\I)$.   Define the set  $\cR: = \{(w, y) \in \E \times \E: w < y \}$. 
For any function $f: \E  \mapsto \R$ and $(w, y) \in \cR$, \begin{equation} \label{e:Bf-defn}
  Bf(w, y): = f(y) - f(w).
\end{equation} The jump operator $B$ captures the effect of an instantaneous impulse. 

We finish this section with some elementary but important observations concerning the time potential $\xi$ defined in \eqref{e-xi} and its   (running reward) companion  
\begin{align}
\label{e:g-defn}
g(x)&: = \int_{x_{0}}^{x}\int_{a}^{v} c(u)\, dM(u)\, dS(v), \quad  x\in \I, 
\end{align} 
which is well-defined and finite due to Conditions  \ref{diff-cnd}(b) and  \ref{c-cond}.  Both $\xi$ and $g$ are $0$ at $x_0$, negative for $x < x_0$ and positive for $x> x_0$.    The functions $\xi$ and $g$ are twice continuously differentiable on $\I$ with  
\begin{align}
\label{e:xi-derivatives}\xi'(x) & = s(x) M[a,x],   && \xi''(x) = -\frac{2\mu(x)}{\sigma^{2}(x) } \xi'(x) + s(x) m(x), \\ 
\label{e:g-derivatives}g'(x) & = s(x) \int_{a}^{x} c(u)\, dM(u),   && g''(x) =  -\frac{2\mu(x)}{\sigma^{2}(x) } g'(x) +s(x) m(x)c(x).
\end{align} Using these derivatives, we can immediately see that  
\begin{align}\label{e:AxiAg} 
A\xi(x) =1,\ \text{ and } \  Ag(x) = c(x), \quad \forall x\in \I. 
\end{align}   
Alternatively, the  equations of \eqref{e:AxiAg} can be derived from  the second representation of the generator $A$ in \eqref{generator}.   We note also that for the identity function $\id(x) = x$, it is immediate that $A\id(x) = \mu(x)$ for all $x \in \I$. Morever,   by \eqref{e:1/s-identity},     $\id$  
admits  a double integral representation  $\id(x) =  \id(x_0) + \int_{x_{0}}^{x}\int_{a}^{v} \mu(u)\, dM(u)\, dS(v)$ for any $x\in \I$.  

Finally, the following proposition provides  probabilistic representations for the differences of the functions of $\xi$, $g$ and $\id$. Its proof, being similar to that of Proposition~2.6 of \cite{HelmSZ-17}, is omitted here for brevity.    
\begin{prop}\label{prop-tau-y-mean} Assume Conditions \ref{diff-cnd} and \ref{c-cond} hold and let  $a < w < y < b$. Denote by $\tau_{y}:=\inf\{ t > 0: X_{0}(t) = y\}$ the first passage time to $y\in\I$ of the process $X_{0}$ of \eqref{e:X0} with initial state $x_0=w$.  Then  
\begin{align}  \label{e:tau_x-b}
\EE_{w}&[\tau_{y}]  = \int_{w}^{y} S[u,y]\, dM(u) + S[w,y] M[a,w] = \xi(y)  -\xi(w) = B\xi(w,y), \\   \label{e:c-tauy-mean}
\EE_{w}& \left[\int_{0}^{\tau_{y}} c(X_{0}(s)) ds \right]  = \int_{w}^{y} c(u) S[u, y]\, dM(u) + S[w, y]\int_{a}^{w} c(u)\, dM(u) = Bg(w,y),
\end{align}   
 and  
\begin{equation} \label{e:mu-tauy-mean}
\EE_{w}\left[\int_{0}^{\tau_{y}} \mu(X_{0}(s)) ds \right]= \int_{w}^{y} \mu(u) S[u, y]\, dM(u) + S[w, y]\int_{a}^{w} \mu(u)\, dM(u) = B\id(w,y).
\end{equation}   
\end{prop}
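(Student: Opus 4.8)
The plan is to recognize the three identities as instances of a single Dynkin-type formula, prove that formula by It\^o's formula together with a localization near the left boundary, and extract the stated closed forms from Fubini's theorem. For $h\in\{1,c,\mu\}$ set
\[
\Phi_h(x):=\int_{x_0}^{x}\int_{a}^{v}h(u)\,dM(u)\,dS(v),\qquad x\in\I,
\]
which is finite on $\I$ by \cndref{diff-cnd}(b), \cndref{c-cond}, and $|\mu(a)|<\infty$. Then $\Phi_1=\xi$, $\Phi_c=g$, and by \eqref{e:1/s-identity} one has $\Phi_\mu(x)=\int_{x_0}^{x}s(v)^{-1}\,dS(v)=x-x_0$, so $B\Phi_\mu=B\id$. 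Each $\Phi_h$ is $C^2$ on $\I$ ($\xi$ and $g$ by the observations preceding \eqref{e:AxiAg}, $\Phi_\mu$ since it is affine), and $A\Phi_h=h$ on $\I$ by \eqref{e:AxiAg} and $A\id=\mu$. Consequently $v_h:=B\Phi_h(\cdot,y)=\Phi_h(y)-\Phi_h(\cdot)$ lies in $C^2(\I)$ with $Av_h=-h$ on $\I$ and $v_h(y)=0$. A short Fubini computation---split $\int_a^v h\,dM=\int_a^w h\,dM+\int_w^v h\,dM$ for $v\ge w$ and interchange the order of integration in $\int_w^y\big(\int_w^v h(u)\,dM(u)\big)dS(v)$---shows that, for $a<w<y<b$,
\[
B\Phi_h(w,y)=\int_w^y h(u)\,S[u,y]\,dM(u)+S[w,y]\int_a^w h(u)\,dM(u),
\]
which for $h=1,c,\mu$ is precisely the closed form on the right of \eqref{e:tau_x-b}, \eqref{e:c-tauy-mean}, \eqref{e:mu-tauy-mean}, together with $B\xi(w,y)=\xi(y)-\xi(w)$, $Bg(w,y)=g(y)-g(w)$ and $B\id(w,y)=y-w$.

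It remains to establish the probabilistic identity $\EE_w\big[\int_0^{\tau_y}h(X_0(s))\,ds\big]=v_h(w)$ for $h\in\{1,c,\mu\}$. Under \cndref{diff-cnd}, in particular \eqref{e:M(a-y)-finite}, we have $\tau_y<\infty$ a.s.\ and $\EE_w[\tau_y]<\infty$; since $c\le c(b)$ on $\I$ and $\mu$ is bounded on $(a,y]$ by \cndref{diff-cnd}(c), the variable $\int_0^{\tau_y}h(X_0(s))\,ds$ is integrable. Fix a sequence $a_n\downarrow a$ with $a_n<w$, and let $\sigma_n:=\tau_{a_n}\wedge\tau_y$ be the first exit time of $X_0$ from $(a_n,y)$, a subinterval whose closure is compact in $\I$, so $\EE_w[\sigma_n]<\infty$. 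Applying It\^o's formula to $v_h(X_0(\cdot\wedge\sigma_n))$, using that $v_h$ and $Av_h=-h$ are bounded on $[a_n,y]$ (so the stochastic integral is a martingale and the time horizon may be sent to infinity), and using $v_h(y)=0$ with the continuity of $X_0$, we obtain
\[
\EE_w\Big[\int_0^{\sigma_n}h(X_0(s))\,ds\Big]=v_h(w)-v_h(a_n)\,\PP_w(\tau_{a_n}<\tau_y),\qquad \PP_w(\tau_{a_n}<\tau_y)=\frac{S(y)-S(w)}{S(y)-S(a_n)}.
\]

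Now I let $n\to\infty$. Since $a$ is non-attracting it is inaccessible, so the continuous path $X_0|_{[0,\tau_y]}$ stays at positive distance above $a$; hence $\sigma_n\uparrow\tau_y$ a.s., and the left-hand side above converges to $\EE_w[\int_0^{\tau_y}h(X_0(s))\,ds]$ (monotone convergence when $h\ge0$; dominated convergence, with dominating function a constant times $\tau_y$, when $h=\mu$). For the boundary term, the identity
\[
v_h(a_n)\,\PP_w(\tau_{a_n}<\tau_y)=\big(S(y)-S(w)\big)\,\frac{\int_{a_n}^{y}\big(\int_a^{v}h(u)\,dM(u)\big)\,dS(v)}{\int_{a_n}^{y}dS(v)}
\]
exhibits it as $S(y)-S(w)$ times the $dS$-weighted average of $v\mapsto\int_a^{v}h(u)\,dM(u)$ over $(a_n,y)$. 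Because $M$ has no atom at $a$ (\cndref{diff-cnd}(a)) this integrand tends to $0$ as $v\downarrow a$, and because $a$ non-attracting together with \eqref{e:M(a-y)-finite} forces $S(a)=-\infty$, the measure $dS$ on $(a_n,y)$ concentrates near $a$ as $n\to\infty$; splitting $(a_n,y)$ at a level close to $a$ then shows this weighted average, hence the whole boundary term, tends to $0$. Therefore $\EE_w[\int_0^{\tau_y}h(X_0(s))\,ds]=v_h(w)=B\Phi_h(w,y)$, which combined with the Fubini computation proves \eqref{e:tau_x-b}, \eqref{e:c-tauy-mean} and \eqref{e:mu-tauy-mean}. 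The one genuine obstacle is this vanishing of the boundary term: when $a$ is a natural boundary $v_h$ is unbounded near $a$, so it rests on the $dS$-averaging estimate combined with $S(a)=-\infty$ and the atomlessness of $M$ near $a$; the remaining ingredients (It\^o's formula, the standard limit interchanges, Fubini) are routine. If one prefers to bypass the limit in $n$, one can instead invoke the classical Green-function representation $\EE_w[\int_0^{\tau_y}f(X_0(s))\,ds]=\int_{(a,y)}f(z)\,S[z\vee w,y]\,dM(z)$ for $X_0$ killed at $y$ with its non-attracting left boundary, and then apply the same Fubini step.
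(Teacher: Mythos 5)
Your argument is correct, and it reaches the result by a somewhat different route than the one the paper has in mind. The paper omits the proof, pointing to Proposition~2.6 of \cite{HelmSZ-17}: there one quotes the classical Karlin--Taylor closed form for the two-sided exit quantities $\EE_w[\tau_l\wedge\tau_y]$ (and its analogues with an integrand), rewrites the kernels as $\frac{S[u,y]}{S[l,y]}$ and $\frac{S[l,v]}{S[l,y]}\le 1$, and lets $l\downarrow a$ using $S(a,y]=\infty$ (from $a$ non-attracting) together with dominated convergence and $M[a,y]<\infty$. You instead build the potentials $\Phi_h$ with $A\Phi_h=h$, derive the exit identity on $(a_n,y)$ yourself via It\^o's formula, and then must kill the boundary term $v_h(a_n)\PP_w(\tau_{a_n}<\tau_y)$ by the $dS$-weighted averaging estimate; the Fubini step recovering the stated kernels is common to both. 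The two proofs hinge on the same two facts ($S(a,y]=\infty$ and $M[a,y]\to 0$ near $a$), but yours is self-contained and makes explicit why $B\xi$, $Bg$, $B\id$ solve the relevant Dirichlet problems, at the cost of having to justify the martingale property of the localized It\^o term and the vanishing of the (possibly unbounded, when $a$ is natural) boundary contribution --- precisely the point you correctly single out as the only non-routine step. Two cosmetic remarks: $S(a,y]=\infty$ follows from $a$ being non-attracting alone, without invoking \eqref{e:M(a-y)-finite}; and the decay $\int_a^v h\,dM\to 0$ is best stated as a consequence of $M[a,y]<\infty$ together with the absolute continuity of $M$, rather than merely the absence of an atom at $a$.
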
 
The difference \eqref{e:tau_x-b} gives the expected passage time of $X_0$ from $w$ to $y$, whereas \eqref{e:c-tauy-mean} and \eqref{e:mu-tauy-mean}  are the expected running reward and the expected growth during this passage time.  Based on \eqref{e:tau_x-b}, \eqref{e:c-tauy-mean} and \eqref{e:mu-tauy-mean}, we call $\xi$ the time potential, $g$ the running reward potential and $\id$ the drift potential.

Furthermore, from  \eqref{e:tau_x-b} and \eqref{e:xi-derivatives}, the limit  \eqref{e-sM-infty}  implies that the rate of increase of the expected passage time of $X_0$ from $w$ to $y$ diverges to infinity as $y$ approaches $b$.  Intuitively, this indicates that the process slows down as it approaches a finite right boundary.

\section{$(w,y)$-Impulse Policies} 
\label{sect-preliminaries}
We introduce $(w,y)$-impulse policies in \defref{thresholds-policy} and establish in \lemref{lem-(wy)admissible} that these policies are admissible in the sense of \defref{admissible-policy}. Moreover, by employing a renewal argument, we can derive the long-term average reward and establish preliminary results for these functions. We also characterize the product supply rate and determine asymptotic relations involving the  functions $\xi$ and $g$ as the thresholds $w$ and $y$ converge to the boundaries of the state space.  

\begin{defn}[$(w,y)$-Policies] \label{thresholds-policy}
 Let $(w,y) \in \cR$ and set $\tau_0 = 0$ and $X^{(w,y)}(0-)=x_0$.  Define the {\em $(w,y)$-impulse policy}\/ $R^{(w,y)}$, with corresponding state process $X^{(w,y)}$, such that for $k \in \NN$,
$$ \tau_k = \inf\{t > \tau_{k-1}: X^{(w,y)}(t-) \geq y\} \qquad \text{and} \qquad Y_k = X^{(w,y)}(\tau_k-) - w.$$ 
The   definition of $\tau_k$ must be slightly modified when $k = 1$ to be $\tau_1 = \inf\{ t\ge 0: X(t-)\ge y\}$ to allow for the first jump to occur at time 0 when $x_0\ge y$.
\end{defn}

Under this policy, the impulse controlled process $X^{(w,y)}$ immediately resets to the level $w$ at the time it would reach (or initially exceed) the threshold $y$.  For simplicity, this type of policy will be called a $(w,y)$-policy, with the fact that it consists of impulses being understood from the notation.

Observe that all interventions, except possibly the first, have a reduction of size $y-w$ and an expected cycle length of $\xi(y) - \xi(w)$.  Using a renewal argument as in \cite{helm:18,HelmSZ-25},  the long-term average rate of interventions for the policy $R^{(w,y)}$ is
\begin{equation} \label{lta-intervention-rate}
\lim_{t\to \infty} t^{-1} \EE\left[\sum_{k=1}^\infty I_{\{\tau_{k} \le t\}}\right] = \frac{1}{\xi(y) - \xi(w)}
\end{equation}
and hence the long-term average supply rate is
\begin{equation} \label{eq-kappa-Q} 
\lim_{t\to\infty}   t^{-1} \EE\left[  \sum_{k=1}^{\infty}I_{\{\tau_{k} \le t\}} (X^{(w,y)} (\tau_{k}-) - X^{(w,y)} (\tau_{k}))\right]  = \frac{y-w}{\xi(y) - \xi(w)}  
  =: {\mathfrak z}(w,y).
\end{equation}
 Furthermore, the renewal argument shows that the long-term average reward \eqref{e:reward-fn} is given by the ratio of the expected net rewards in a cycle to the expected cycle length.  This means that when the parameters are $p$, $K$ and $\gamma$, the value is $J(R^{(w,y)};p,K,\gamma) = F(w, y;p,K,\gamma)$, where    
\begin{equation}
\label{e:F_K}
F(w, y;p,K,\gamma) = F(w,y) := \frac{p(y-w) - K + \gamma(  g(y) - g(w) )}{\xi(y) - \xi(w)}, \quad  (w,y) \in \mathcal R. 
\end{equation} 
Notice that $F$ is comprised of three terms.  The first is the price $p$ times the long-term average supply rate $\mathfrak z(w,y)$.    The second term, $\frac{K}{\xi(y)-\xi(w)}$, is a reduction by the equivalent expected rate at which the fixed cost would be continually assessed during the cycle.  The third term is the fraction $\frac{\gamma ( g(y)-g(w) )}{\xi(y)-\xi(w)}$, which is the expected running reward during a cycle divided by the expected cycle length.

Using  the elegant renewal argument presented  in Chapter 15, Section 8 of \cite{KarlinT81} or the standard analytical arguments,  one can show that the  policy $R^{(w,y)}$  induces an invariant measure having density $\nu(x; w,y)$ on $\E$, where 
\begin{equation}\label{e:nu_density-wy}
\nu(x; w,y) = \nu(x) := \begin{cases}
   \varrho\, m(x)\, S[w, y]   & \text{ if }x \le w, \\
   \varrho\, m(x)\, S[x, y]   & \text{  if } w < x \le y,\\
   0 & \text{  if }   x > y,
\end{cases}
\end{equation} 
and $\varrho= (\int_{a}^{w} m(x) S[w, y] dx + \int_{w}^{y} m(x) S[x, y] dx) ^{-1} $ is the normalizing constant.  In view of the second equality of \eqref{e:tau_x-b},  we have $  \varrho	=( \xi(y) - \xi(w)) ^{-1}$.   Furthermore, using \eqref{e:c-tauy-mean} and the definition of $\nu(x)$, we can write 
\begin{displaymath}
g(y) - g(w) =  \int_{w}^{y} c(u) S[u, y]\, dM(u) + S[w, y]\int_{a}^{w} c(u)\, dM(u) = \varrho^{ -1}\int_{a}^{b} c(x) \nu(x)\, dx.
\end{displaymath} 
Similarly, with the aid of \eqref{e:1/s-identity},  straightforward  computations  leads to \begin{align*}
y-w & = \int_{w}^{y}\left( \int_{a}^{x} \mu(u) m(u) d u\right) s(x) dx 
= \varrho^{-1} \int_{a}^{b}\mu(u) \nu(u) du.
\end{align*} Consequently, the long-term average supply rate \eqref{eq-kappa-Q} can be rewritten as
 \begin{align*}  
\mathfrak z(w,y) = \frac{y -w}{\xi(y) - \xi(w)} & = \int_{a}^{b}\mu(u) \nu(u) du, 
\end{align*}
and   the long-term average running reward rate is 
\begin{align*}  
\frac{\gamma ( g(y)-g(w) )}{\xi(y)-\xi(w)} & = \int_a^b \gamma\, c(u)\, \nu(u)\, du.  \rule{0pt}{22pt}
\end{align*}
Hence the long-term average reward rate \eqref{e:F_K}  of the policy $R^{(w,y)}$ has the representation 
$$F(w, y) =  \frac{p(y-w) - K + \gamma ( g(y) - g(w) ) }{\xi(y) - \xi(w)} = \int_{a}^{b} [r(x) - K\varrho] \nu(x)\, dx,$$ 
where the instantaneous reward rate $r$ is defined in \eqref{e:r_p}.  
This reward rate is adjusted by the rate at which the fixed cost would be expected to be assessed over the entire cycle.  Thus the integral representation of $F$ gives the stationary profit rate.  Notice also that the long-term average supply rate ${\mathfrak z}(w,y)$ is the mean drift under the stationary distribution.

 \begin{lem} \label{lem-(wy)admissible}
  Assume Condition \ref{diff-cnd} holds. Then the $(w,y)$-policy $R^{(w,y)}$ defined in \defref{thresholds-policy} is admissible in the sense of \defref{admissible-policy}.
\end{lem}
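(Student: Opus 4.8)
The plan is to check, in turn, the four clauses of \defref{admissible-policy} for $R^{(w,y)}$; clauses (i)--(iii) are routine and clause (iv) carries the real content.

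For (i): each $\tau_k$ is a first hitting time of $[y,\infty)$ by the \cadlag process $X^{(w,y)}$ (after $\tau_{k-1}$), hence an $\{\F_t\}$-stopping time, and the $\tau_k$ increase by construction. Each $\tau_k<\infty$ a.s.\ because, $a$ being non-attracting, $S(a+)=-\infty$, so from any starting point $<y$ one has $\PP_\cdot(\tau_y<\tau_a)=\lim_{a'\downarrow a}\frac{S(\cdot)-S(a')}{S(y)-S(a')}=1$; thus every cycle completes and, by \eqref{e:tau_x-b}, the cycle lengths $\tau_k-\tau_{k-1}$ ($k\ge2$) are i.i.d.\ with finite positive mean $\ell:=\xi(y)-\xi(w)$, so $\tau_k\to\infty$ a.s.\ by the strong law. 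For (ii): on $\{\tau_k<\infty\}$, continuity of $X^{(w,y)}$ between interventions forces $X^{(w,y)}(\tau_k-)\ge y$, whence $0<y-w\le Y_k<X^{(w,y)}(\tau_k-)-a$ (strict because $w>a$), and $Y_k$ is clearly $\F_{\tau_k}$-measurable. For (iii): one builds $X^{(w,y)}$ by concatenating, on each interval $[\tau_{k-1},\tau_k)$, the unique non-explosive weak solution of \eqref{e:X0} started at $w$ (at $x_0$ for $k=1$), inserting a downward jump $Y_k$ at $\tau_k$; since $\tau_k\uparrow\infty$ this defines $X^{(w,y)}$ on all of $[0,\infty)$ and it satisfies \eqref{e:X}.

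The substantive step is (iv). If $a$ is an entrance boundary the clause is vacuous; if $a$ is natural, (i) shows all $\tau_k<\infty$, so I must establish the transversality condition \eqref{eq-xi-transversality}. I would exploit the identity $A\xi\equiv1$: applying It\^o's formula to $\xi(X^{(w,y)})$ and stopping at $t\wedge\beta_n$ --- for large $n$ the stopped path lies in the compact set $[a_n,\max(x_0,y)]\subset\I$, so the stochastic integral is a genuine martingale --- gives
\[
\EE\big[\xi(X^{(w,y)}(t\wedge\beta_n))\big]=\EE[t\wedge\beta_n]-\EE\Big[\textstyle\sum_{k:\,\tau_k\le t\wedge\beta_n}\big(\xi(X^{(w,y)}(\tau_k-))-\xi(w)\big)\Big].
\]
Here $X^{(w,y)}(\tau_k-)=y$ for every $k$ at which the process actually reaches $y$ (all $k\ge2$, and $k=1$ unless $x_0\ge y$), so each summand equals $\ell$ apart from one possibly-different first term; hence the sum lies in $[\ell N_n,\ell N_n+C_2]$, where $N_n:=\#\{k:\tau_k\le t\wedge\beta_n\}$ and $C_2:=(\xi(y))^-\mathbf 1_{\{x_0\ge y\}}$. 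I would then use two facts: $\EE[N_n]\le\EE[\#\{k:\tau_k\le t\}]<\infty$ (delayed renewal counting), and $N_n$ is a stopping time for $\{\F_{\tau_j}\}_j$ since $\{\tau_j\le t\wedge\beta_n\}\in\F_{\tau_j}$. Because $\EE[\tau_k-\tau_{k-1}\mid\F_{\tau_{k-1}}]=\ell$ with $\EE\big[\,|(\tau_k-\tau_{k-1})-\ell|\,\mid\F_{\tau_{k-1}}\big]\le2\ell$ for $k\ge2$, a Wald-type optional-stopping identity for the induced martingale-difference sequence yields $\EE[\tau_{N_n}]\ge\ell\EE[N_n]-\ell$, and therefore $\EE[t\wedge\beta_n]\ge\EE[\tau_{N_n}]\ge\ell\EE[N_n]-\ell$. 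Combining, $\EE[\xi(X^{(w,y)}(t\wedge\beta_n))]\ge-\ell-C_2$, uniformly in $t$ and $n$.

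To finish, I would observe that $X^{(w,y)}(t\wedge\beta_n)\le\max(x_0,y)$ always, so $\xi^+(X^{(w,y)}(t\wedge\beta_n))\le\xi^+(\max(x_0,y))=:C_4<\infty$; hence
\[
\EE\big[\xi^-(X^{(w,y)}(t\wedge\beta_n))\big]=\EE\big[\xi^+(X^{(w,y)}(t\wedge\beta_n))\big]-\EE\big[\xi(X^{(w,y)}(t\wedge\beta_n))\big]\le C_4+\ell+C_2,
\]
a bound free of $t$ and $n$, so $\limsup_n t^{-1}\EE[\xi^-(X^{(w,y)}(t\wedge\beta_n))]\le(C_4+\ell+C_2)/t\to0$ as $t\to\infty$, which is \eqref{eq-xi-transversality}. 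The main obstacle is precisely this uniform bound: since $\xi^-$ is unbounded near the natural boundary $a$, one cannot simply bound $\xi^-$ on the state space, and the argument must instead capture the exact cancellation between the unit running cost encoded by $A\xi\equiv1$ and the per-cycle decrease $\ell$ of $\xi$ at each intervention --- the renewal/Wald bookkeeping above being the way to make that cancellation rigorous using only $\EE[N_n]$ and $\EE[t\wedge\beta_n]$.
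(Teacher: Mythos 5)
Your overall route is the same as the paper's: apply It\^o's formula to $\xi(X^{(w,y)})$, use $A\xi\equiv 1$ together with the per-cycle decrease $\ell=\xi(y)-\xi(w)$ at each intervention, and then exploit the upper bound $\xi^+(X^{(w,y)})\le \xi^+(x_0\vee y)$ to convert control of $\EE[\xi(X(t\wedge\beta_n))]$ into control of $\EE[\xi^-(X(t\wedge\beta_n))]$. Parts (i)--(iii) are fine (the paper treats them as obvious). The gap is in the quantitative renewal step: you claim the \emph{uniform-in-$(t,n)$} bound $\EE[\xi(X(t\wedge\beta_n))]\ge -\ell-C_2$ via $\EE[\tau_{N_n}]\ge \ell\,\EE[N_n]-\ell$. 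First, $N_n$ is not a stopping time for $\{\F_{\tau_j}\}$ in the sense Wald requires: $\{N_n\ge j\}=\{\tau_j\le t\wedge\beta_n\}$ is \emph{not} $\F_{\tau_{j-1}}$-measurable (it depends on the $j$-th cycle); only $N_n+1$ is a legitimate stopping time, and optional stopping at $N_n+1$ gives $\EE[\tau_{N_n+1}]=\EE[\tau_1]+\ell\,\EE[N_n]$. Your inequality is therefore equivalent to $\EE[Z_{N_n+1}]\le \EE[\tau_1]+\ell$, i.e.\ a uniform bound on the expected length of the cycle straddling $t\wedge\beta_n$. By the inspection paradox this cycle is length-biased, and its expectation tends to $\EE[Z^2]/\EE[Z]$; Conditions \ref{diff-cnd}--\ref{c-cond} only guarantee $\EE[Z]=\ell<\infty$, not $\EE[Z^2]<\infty$. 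When $\EE[Z^2]=\infty$ one has $\EE[\xi(X(t))]\to-\infty$ (sublinearly) --- equivalently $\int\xi^-\,d\nu=\infty$ for the stationary density \eqref{e:nu_density-wy} --- so the uniform lower bound you assert is actually false in general, even though \eqref{eq-xi-transversality} itself still holds.

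The repair is exactly what the paper does: do not aim for an $O(1)$ bound. Divide by $t$ \emph{before} estimating, and invoke the elementary renewal theorem \eqref{lta-intervention-rate}, $\lim_{t\to\infty}t^{-1}\EE[N(t)]=1/\ell$, which needs only the first moment. This yields $\lim_{t\to\infty}\lim_{n\to\infty}t^{-1}\EE[\xi(X(t\wedge\beta_n))]=1-\ell\cdot\ell^{-1}=0$, and then your final step (bounding $\xi^+$ on $[x_0,x_0\vee y]$ and writing $\EE[|\xi|]\le -\EE[\xi]+C$) goes through verbatim to give $t^{-1}\EE[\xi^-(X(t\wedge\beta_n))]\to 0$, which is \eqref{eq-xi-transversality}.
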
 
\begin{proof} Obviously, we only need to show that   \eqref{eq-xi-transversality} holds.  To this end,  denote by $X = X^{(w, y)}$ the controlled process under the  policy $R^{(w,y)}$. Also let $\beta_n$ be as in Definition \ref{admissible-policy}.  An application of Itô's formula gives
 \begin{align*} 
 \EE[\xi(X(t\wedge \beta_n))] & = \xi(x_0) +\EE\left[ \int_{0}^{t\wedge \beta_n} A\xi(X(s)) ds + \sum_{k=1}^\infty I_{\{\tau_k \le t\wedge \beta_n \}} [\xi(X( \tau_k))- \xi(X( \tau_k-))] \right] \\
 & = \xi(x_0) +\EE[t\wedge \beta_n] + (\xi(w)-\xi(y))\EE\left[ \sum_{k=1}^\infty I_{\{\tau_k \le t\wedge \beta_n \}}  \right] \\ & \quad + [(\xi(w)-\xi(x_0))- (\xi(w)-\xi(y))]\EE[I_{\{\tau_1 =0\}}]; 
  \end{align*} 
  the last term is to account for the possibility of an intervention at time 0 in the event that $x_0>y$. Dividing both sides by $t$ and then taking the limits as $n, t\to\infty$, we have from the monotone convergence theorem and \eqref{lta-intervention-rate} that 
\begin{align*} 
 \lim_{t\to\infty}\lim_{n\to\infty} t^{-1} \EE[\xi(X(t\wedge \beta_n))] & =  1 + (\xi(w)-\xi(y))\lim_{t\to\infty} t^{-1} \EE\left[ \sum_{k=1}^\infty I_{\{\tau_k \le t  \}}  \right] \\
  & = 1 + (\xi(w)-\xi(y)) \frac1{\xi(y)-\xi(w)} =0.  
  \end{align*} 
  On the other hand, under the policy $R^{(w,y)}$, the process $X$ is bounded above by $y\vee x_0< b$. Also recall that $\xi(x)$ is negative for $x< x_0$ and positive for $x> x_0$. Therefore, we have 
\begin{align*}  
\EE[\, |\xi(X(t\wedge \beta_n))|\,]  
    & =   \EE[\xi(X(t\wedge \beta_n))I_{\{X(t\wedge \beta_n) \in[x_0, x_0\vee y] \}}]
     -\EE[\xi(X(t\wedge \beta_n))I_{\{X(t\wedge \beta_n)  < x_0 \}}] \\ 
   & =-\EE[\xi(X(t\wedge \beta_n))] + 2 \EE[\xi(X(t\wedge \beta_n))I_{\{X(t\wedge \beta_n) \in[x_0, x_0\vee y] \}}]\\ 
& \le -  \EE[\xi(X(t\wedge \beta_n))] + C, \end{align*} where $C= 2 \max_{x\in[x_0,x_0\vee y]} \xi(x) < \infty$ is a positive constant.  Thus, we have 
\begin{align*} 
0\le  \liminf_{t\to\infty}\limsup_{n\to\infty} t^{-1} \EE[\,|\xi (X(t\wedge \beta_n))|\,] & \le  \limsup_{t\to\infty}\limsup_{n\to\infty} t^{-1} \EE[\,|\xi (X(t\wedge \beta_n))|\,] \\ & \le  -  \liminf_{t\to\infty}\liminf_{n\to\infty} t^{-1} \EE[\xi(X(t\wedge \beta_n))] =  0.  \end{align*} This implies \eqref{eq-xi-transversality} and hence completes the proof.  
\end{proof}

We next present  some important observations concerning the  functions $\xi$ and $g$. These observations help  to establish the existence (Proposition \ref{prop-Fmax}) of a unique (Proposition~\ref{prop-sec3-uni+existence}) maximizer for the function $F$.   To preserve the flow of presentation, we place their proofs in Appendix~\ref{appen-pfs-sec3}.

{  The asymptotic results in Lemmas \ref{lem-limit:Bid/Bxi-a}, \ref{lem1-Bg/Bxi:a} and \ref{lem2-Bg/Bxi:b} analyze the long-term average supply and subsidy rates near the boundaries.  They are used to establish that, when $a$ is a natural boundary, the long-term average payoff expression $F$ of \eqref{e:F_K} achieves its maximal value in the interior of $\cR$ by showing that the limiting values on the boundary are smaller than some value of $F$ in the interior.  For models in which $a$ is an entrance boundary, these asymptotic results are used similarly but the maximal value of $F(w,y)$ may have $w=a$ and occur on the left boundary.  These results are given in \propref{prop-Fmax}. }

The first lemma shows that when both boundaries are natural, the long-term average supply rates converge to $0$ as the reset level $w$ approaches $a$, or as the threshold $y$ converges to $b$, or as both levels converge to the same endpoint.  These results are intuitively clear when $b < \infty$ since the process $X_{0}$ does not diffuse to either boundary in finite time so the expected cycle lengths converge to $\infty$.  The limits hold when $b = \infty$ as well and also the asymptotic ratios of the identity $\id$ to $\xi$ at both boundaries are $0$.

\begin{lem}\label{lem-limit:Bid/Bxi-a} Assume Condition  \ref{diff-cnd}   holds. 
\begin{itemize}
  \item[(i)] If $a$ is natural, then 
\begin{equation}
\label{e1:limit:Bid/Bxi-a}
\lim_{w\downarrow a} \mathfrak z(w,y_0) = \lim_{w\downarrow a} \frac{ w}{   \xi(w)}=\lim_{(w, y)\to (a, a)} \mathfrak z(w,y) = 0, 
\end{equation} where $y_{0}\in \I$ is an arbitrary fixed point. 

  \item[(ii)] Let $w_{0}\in \I$ be arbitrary. Then 
\begin{equation}
\label{e:limit:Bid/Bxi-b}
\lim_{y\to b} \mathfrak z(w_0,y)  =  \lim_{y\to b} \frac{y}{ \xi(y)  } = \lim_{(w, y)\to(b, b)} \mathfrak z(w,y)  =0. 
\end{equation} 
  \end{itemize}
\end{lem}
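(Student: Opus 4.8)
The plan is to reduce everything to the boundary behaviour of the potential $\xi$ of \eqref{e-xi} and of its derivative $\xi' = s\,M[a,\cdot]$, using the identities $\xi(y)-\xi(w) = \EE_w[\tau_y]$ and $y-w = \EE_w\!\big[\int_0^{\tau_y}\mu(X_0(s))\,ds\big]$ from \eqref{e:tau_x-b} and \eqref{e:mu-tauy-mean} (these involve only $\xi$, $\id$ and $\mu$, so \cndref{diff-cnd} alone suffices), the hypothesis \eqref{e-sM-infty}, and \propref{lem-mu-at-b}. The three ingredients I will extract are: (a) if $a$ is natural then $\xi(w)\to-\infty$ as $w\downarrow a$; (b) $\xi(y)\to+\infty$ as $y\uparrow b$; and (c) $\xi'(x)\to\infty$ as $x\to b$, which is \eqref{e-sM-infty} verbatim. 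I will also use that $a$ natural forces $\mu(a)=0$, which is immediate from \propref{lem-mu-at-b}(i) since $\mu(a)>0$ would make $a$ an entrance point.

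\emph{Proof of (i).} For ingredient (a), fix $y_0\in\I$ and use \eqref{e:tau_x-b}: $\xi(y_0)-\xi(w) = \EE_w[\tau_{y_0}] \ge \int_w^{y_0} S[u,y_0]\,dM(u)$, which by monotone convergence increases, as $w\downarrow a$, to $\int_a^{y_0}S[u,y_0]\,dM(u)$. Under \cndref{diff-cnd} the boundary $a$, being non-attracting, is inaccessible, and it is an entrance point precisely when this last integral is finite; since $a$ is natural the integral equals $+\infty$, so $\xi(w)\to-\infty$. Now in $\mathfrak z(w,y_0) = (y_0-w)/(\xi(y_0)-\xi(w))$ the numerator tends to the finite nonzero constant $y_0-a$ and the denominator to $+\infty$, which gives the first limit; moreover $w/\xi(w)\to a/(-\infty)=0$, the same value. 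For the double limit I write, via \eqref{e:mu-tauy-mean} and \eqref{e:tau_x-b}, $\mathfrak z(w,y) = \EE_w\!\big[\int_0^{\tau_y}\mu(X_0(s))\,ds\big]\big/\EE_w[\tau_y]$; since $a$ is non-attracting the uncontrolled path stays in $(a,y)$ up to time $\tau_y$, so $|\mu(X_0(s))|\le\max_{[a,y]}|\mu|$ there and hence $|\mathfrak z(w,y)|\le\max_{[a,y]}|\mu|$; letting $(w,y)\to(a,a)$ and invoking continuity of $\mu$ together with $\mu(a)=0$ yields $\mathfrak z(w,y)\to0$.

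\emph{Proof of (ii).} For ingredient (b), as $y\uparrow b$ the times $\tau_y$ increase a.s.\ to the hitting time $\tau_b$ of $b$; since $b$ is a natural, hence inaccessible, boundary and the diffusion is non-explosive, $\tau_b=\infty$ a.s., so by monotone convergence $\xi(y)=\EE_{x_0}[\tau_y]\uparrow+\infty$. If $b<\infty$, then in $\mathfrak z(w_0,y) = (y-w_0)/(\xi(y)-\xi(w_0))$ the numerator stays bounded and the denominator tends to $+\infty$, so the limit is $0$. If $b=\infty$, \eqref{e-sM-infty} provides, for each $M>0$, a level past which $\xi'>M$, whence $\xi(y)\ge My+\mathrm{const}$ for large $y$ and therefore $\xi(y)/y\to\infty$; writing $\mathfrak z(w_0,y) = (1-w_0/y)\big/\big(\xi(y)/y-\xi(w_0)/y\big)$ again gives limit $0$. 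In both cases $y/\xi(y)\to0$ as well, the same value. Finally, for the double limit the mean value theorem gives $\mathfrak z(w,y) = (y-w)/(\xi(y)-\xi(w)) = 1/\xi'(\zeta)$ for some $\zeta=\zeta(w,y)\in(w,y)$; since $(w,y)\to(b,b)$ forces $\zeta\to b$, \eqref{e-sM-infty} gives $\xi'(\zeta)\to\infty$ and hence $\mathfrak z(w,y)\to0$.

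\emph{Main obstacle.} The delicate step is ingredient (a): it is precisely here that the entrance-versus-natural dichotomy at the left endpoint enters, and one must invoke the boundary classification (Karlin--Taylor, Ch.\ 15) to the effect that, once $a$ is inaccessible, $a$ is natural exactly when the Feller integral $\int_a S[u,\cdot]\,dM$ diverges. A related subtlety is that the $(w,y)\to(a,a)$ statement cannot be obtained from the mean value theorem the way the $(w,y)\to(b,b)$ one is, because $\xi'$ need not tend to $\infty$ at a natural left boundary -- divergence of $\int_a\xi'$ forces only $\limsup_{x\downarrow a}\xi'(x)=\infty$; the bound through $\mu$ and the identity $\mu(a)=0$ is what sidesteps this. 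Finally, in (ii) the cases $b<\infty$ and $b=\infty$ genuinely require different endgames: for $b=\infty$ it is the superlinear growth of $\xi$, a consequence of $\xi'\to\infty$, rather than merely $\xi\to\infty$, that is needed.
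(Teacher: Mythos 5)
Your proof is correct, and its overall architecture matches the paper's: reduce everything to $\xi(w)\to-\infty$ at a natural left boundary, $\xi(y)\to+\infty$ at $b$, and $\xi'\to\infty$ at $b$ via \eqref{e-sM-infty}, with the mean value theorem handling the $(b,b)$ double limit. The one place you genuinely diverge is the third limit in (i). The paper also uses the mean value theorem there: it first shows via L'H\^opital (see \eqref{e:xi':a} in the proof of \propref{lem-mu-at-b}) that $\lim_{x\downarrow a}\xi'(x)=\lim_{x\downarrow a} s(x)M(a,x]=1/\mu(a)$, which under \cndref{diff-cnd}(c) is a genuine two-sided limit because the ratio of derivatives is exactly $1/\mu(x)$ and $\mu$ extends continuously to $a$; since $a$ natural forces $\mu(a)=0$, this gives $\xi'(x)\to\infty$ as $x\downarrow a$ and the MVT argument goes through. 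So your ``main obstacle'' remark overstates the difficulty: the issue you raise (divergence of $\int_a\xi'$ only yielding a limsup) is real in general but is bypassed here by the continuity of $\mu$ at $a$, not only by your workaround. Your alternative -- bounding $|\mathfrak z(w,y)|\le\max_{[a,y]}|\mu|$ through the probabilistic representations \eqref{e:tau_x-b} and \eqref{e:mu-tauy-mean} and then using $\mu(a)=0$ -- is a valid and arguably more transparent substitute, and it isolates the same essential fact ($\mu(a)=0$ at a natural left boundary). Your derivations of $\xi(w)\to-\infty$ and $\xi(y)\to+\infty$ via expected hitting times and the Feller/Karlin--Taylor boundary classification are equivalent to the paper's direct citations of Tables 6.2 and 7.1, and your superlinear-growth argument for $b=\infty$ is just an elementary proof of the L'H\^opital step the paper invokes. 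No gaps.
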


The previous \lemref{lem-limit:Bid/Bxi-a} is concerned with the long-term average product supply rates of $(w,y)$-policies.  The next two lemmas concentrate on the asymptotics of the long-term average running reward rates of such policies as the threshold and reset level converge to the boundaries.

\begin{lem}\label{lem1-Bg/Bxi:a} Assume Conditions \ref{diff-cnd} and \ref{c-cond} hold.  Then 
 \begin{equation}
\label{e0:g/xi-limit:a}
\lim_{(w,y)\to (a,a)} \frac{g(y) - g(w) }{\xi(y) - \xi(w)}=c(a).
\end{equation} Furthermore, if  $a$ is a natural point, then 
\begin{equation}
\label{e:g/xi-limit:a}
\lim_{w\to a} \frac{g(w) }{ \xi(w)} =  \lim_{w\to a} \frac{g(y_{0}) - g(w) }{\xi(y_{0}) - \xi(w)} = c(a),   
\end{equation} where $y_{0}$ is an arbitrary fixed number in $\I$. 
\end{lem}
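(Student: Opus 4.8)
The plan is to reduce everything to one elementary estimate on the ratio $g'/\xi'$ of the derivatives computed in \eqref{e:xi-derivatives}--\eqref{e:g-derivatives}, and then to pass from the derivatives to the functions themselves by a squeeze (for the first limit) and by L'Hôpital's rule (for the second).

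First I would record the pointwise bounds
\[
  c(a)\,\xi'(v) \;\le\; g'(v) \;\le\; c(v)\,\xi'(v), \qquad v\in\I .
\]
These are immediate: by \cndref{c-cond} the function $c$ is increasing and continuous at $a$, so $c(a)\le c(u)\le c(v)$ whenever $a\le u\le v$; inserting this into $g'(v)=s(v)\int_a^v c(u)\,dM(u)$ and comparing with $\xi'(v)=s(v)M[a,v]$ gives the claim (recall $s>0$ and $M[a,v]>0$ for $v>a$). Since $\xi'>0$ on $(a,b)$, integrating over $[w,y]$ and dividing by $\xi(y)-\xi(w)>0$ yields, for \emph{every} $(w,y)\in\cR$,
\[
  c(a) \;\le\; \frac{g(y)-g(w)}{\xi(y)-\xi(w)} \;\le\; c(y).
\]
Letting $(w,y)\to(a,a)$ — so in particular $y\downarrow a$ — and using continuity of $c$ at $a$, the squeeze theorem gives \eqref{e0:g/xi-limit:a}. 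Applying the same bounds to the derivative ratio itself gives $c(a)\le g'(w)/\xi'(w)\le c(w)$, hence $\lim_{w\downarrow a}g'(w)/\xi'(w)=c(a)$; this is the estimate that will drive the second part.

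For \eqref{e:g/xi-limit:a} I now assume $a$ is a natural boundary. The key extra input is that then $\lim_{w\downarrow a}\xi(w)=-\infty$: indeed $-\xi(w)=\xi(x_0)-\xi(w)=\int_w^{x_0}M[a,v]\,dS(v)$ increases as $w\downarrow a$ to a limit that is finite exactly when $a$ is an entrance boundary and infinite when $a$ is natural — recall that $a$ is non-attracting, hence not an exit boundary (see the boundary classification in Chapter~15 of \cite{KarlinT81}, and the discussion around \remref{rem-admissible-policy}). Granting this, both limits in \eqref{e:g/xi-limit:a} follow from L'Hôpital's rule in the form requiring only the denominator to diverge, using that $g',\xi'$ are continuous, $\xi'>0$ on $(a,b)$, and $g'(w)/\xi'(w)\to c(a)$: for $g(w)/\xi(w)$ the denominator $\xi(w)\to-\infty$, while for $\frac{g(y_0)-g(w)}{\xi(y_0)-\xi(w)}$ the denominator $\xi(y_0)-\xi(w)\to+\infty$ and differentiating numerator and denominator in $w$ again produces the ratio $g'(w)/\xi'(w)$; hence both limits equal $c(a)$. (Alternatively, divide numerator and denominator of the second quotient by $-\xi(w)$ and use $g(w)/\xi(w)\to c(a)$ together with $g(y_0)/\xi(w),\,\xi(y_0)/\xi(w)\to 0$.)

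The one point requiring genuine care is the second statement: unlike in \eqref{e0:g/xi-limit:a}, here the interval $[w,y_0]$ does \emph{not} shrink to $\{a\}$, so the crude bound only yields $c(a)\le\frac{g(y_0)-g(w)}{\xi(y_0)-\xi(w)}\le c(y_0)$, which is not sharp; it is essential to exploit that $a$ is natural through the divergence $\xi(w)\to-\infty$. Establishing that divergence cleanly — equivalently, that $\int_w^{x_0}M[a,v]\,dS(v)\to\infty$ as $w\downarrow a$ — is the main obstacle, although it is a standard consequence of \cndref{diff-cnd} and the boundary classification. Everything else is routine.
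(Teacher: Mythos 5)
Your proposal is correct and takes essentially the same route as the paper's proof: both rest on the local bound $c(a)\le \frac{g(y)-g(w)}{\xi(y)-\xi(w)}\le c(y)$ near $a$ (the paper derives it from the double-integral form of $Bg$, you from the derivative bound $c(a)\xi'\le g'\le c\,\xi'$) together with the divergence $\xi(w)\to-\infty$ at a natural left boundary, which is exactly the paper's \eqref{e-xi a=infty} obtained from the Karlin--Taylor boundary classification. The only cosmetic difference is that you invoke the denominator-diverges form of L'H\^opital's rule (valid here since $\xi'>0$ and $g'/\xi'\to c(a)$), whereas the paper carries out the equivalent estimate by hand, splitting $[w,y_0]$ at a point $w_{\varepsilon}$ and letting the finite piece be swamped by the diverging one.
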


\begin{lem}\label{lem2-Bg/Bxi:b}  Assume Conditions \ref{diff-cnd} and \ref{c-cond} hold.  Recall the stationary distribution $\pi$ of $X_0$ given in \propref{lem-mu-at-b}(iii) and define
\begin{equation} \label{eq:c-bar(b)}
   \bar c(b): =  \begin{cases}
c(b),     & \text{ if } M[a, b] = \infty,\\  
\lan c,\pi\ran,      & \text{ if } M[a, b] < \infty.  \end{cases} 
\end{equation}
Let $y_{0} \in \I$ be an arbitrary point.     Then 
\begin{equation}
\label{e1:g/xi-limit:b}
\lim_{y\to b} \frac{g(y) }{ \xi(y)} = \lim_{(w,y)\to (b,b)} \frac{g(y) - g(w) }{\xi(y) - \xi(w)}= \lim_{y\to b} \frac{g(y) - g(y_{0}) }{\xi(y) - \xi(y_{0})} = \bar c(b).
\end{equation}  
\end{lem}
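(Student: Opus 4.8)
The plan is to deduce all three limits in \eqref{e1:g/xi-limit:b} from the single asymptotic relation $\lim_{v\to b} g'(v)/\xi'(v)=\bar c(b)$. By the derivative formulas \eqref{e:xi-derivatives}--\eqref{e:g-derivatives} and cancellation of the strictly positive scale density $s$, this is the same as $\lim_{v\to b}\frac{\int_a^v c(u)\,dM(u)}{M[a,v]}=\bar c(b)$; recall also from the excerpt that $\xi,g\in C^2(\I)$ and $\xi'(v)=s(v)M[a,v]>0$ on $\I$. Granting this relation, the limit as $(w,y)\to(b,b)$ follows immediately from Cauchy's mean value theorem: for each admissible pair there is $\theta=\theta(w,y)\in(w,y)$ with $\frac{g(y)-g(w)}{\xi(y)-\xi(w)}=\frac{g'(\theta)}{\xi'(\theta)}$, and $\theta$ is squeezed toward $b$. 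For the other two limits, where one endpoint is held fixed, the intermediate point produced by Cauchy's theorem need not tend to $b$, so I would instead invoke the $\infty/\infty$ form of l'H\^opital's rule for the single variable $y$; this applies to $g(y)/\xi(y)$ and to $\frac{g(y)-g(y_0)}{\xi(y)-\xi(y_0)}$ once the denominators $\xi(y)$ and $\xi(y)-\xi(y_0)$ are known to diverge to $+\infty$ as $y\to b$.

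That the denominators diverge is a short intermediate step. By \eqref{e:tau_x-b} in \propref{prop-tau-y-mean}, $\xi(y)-\xi(w)=\EE_w[\tau_y]$ for $a<w<y<b$, which is finite for each $y<b$. Since $b$ is a natural boundary it is inaccessible, so $\tau_b=\infty$ $\PP_w$-a.s.; and as $y\uparrow b$ the passage times $\tau_y$ increase to $\sup_{y<b}\tau_y$, which is a.s.\ infinite---on the event that it were finite, the continuous path $X_0$ would exceed every level below $b$ on a compact time interval and hence attain $b$ itself, a contradiction. Monotone convergence then gives $\EE_w[\tau_y]\uparrow\infty$, i.e.\ $\xi(y)-\xi(w)\to\infty$, and taking $w=x_0$ yields $\xi(y)\to\infty$ (so also $\xi(y)-\xi(y_0)\to\infty$).

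It remains to prove the asymptotic relation for $\int_a^v c\,dM$ against $M[a,v]$, and this is where the two cases defining $\bar c(b)$ enter. If $M[a,b]<\infty$, then $c$ is bounded on $\I$ by \cndref{c-cond}, so monotone (or dominated) convergence gives $\int_a^v c\,dM\to\int_a^b c\,dM<\infty$ while $M[a,v]\to M[a,b]$, whence the ratio tends to $\frac{\int_a^b c\,dM}{M[a,b]}=\langle c,\pi\rangle$ by the definition of $\pi$ in \propref{lem-mu-at-b}(iii). If $M[a,b]=\infty$, fix $\varepsilon>0$; since $c$ is increasing and extends continuously to $b$, choose $v_1\in\I$ with $c(b)-\varepsilon\le c(v)\le c(b)$ for all $v\in[v_1,b)$. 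Writing $\int_a^v c\,dM=\int_a^{v_1}c\,dM+\int_{v_1}^v c\,dM$ and $M[a,v]=M[a,v_1]+M[v_1,v]$, and using that $M[a,v]\to\infty$ forces $\frac{\int_a^{v_1}c\,dM}{M[a,v]}\to0$ and $\frac{M[v_1,v]}{M[a,v]}\to1$, the sandwich
\[
\frac{\int_a^{v_1}c\,dM+(c(b)-\varepsilon)\,M[v_1,v]}{M[a,v]}\ \le\ \frac{\int_a^{v}c\,dM}{M[a,v]}\ \le\ \frac{\int_a^{v_1}c\,dM+c(b)\,M[v_1,v]}{M[a,v]}
\]
yields $c(b)-\varepsilon\le\liminf_{v\to b}\frac{\int_a^v c\,dM}{M[a,v]}$ and $\limsup_{v\to b}\frac{\int_a^v c\,dM}{M[a,v]}\le c(b)$; letting $\varepsilon\downarrow0$ gives the limit $c(b)=\bar c(b)$.

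I expect the $M[a,b]=\infty$ case just above to be the main obstacle: it is a Stolz--Ces\`{a}ro / l'H\^opital statement for the Stieltjes integral $\int_a^{\cdot}c\,dM$ against the unbounded ``clock'' $M[a,\cdot]$, and it relies only on the monotonicity and boundary continuity of $c$ from \cndref{c-cond}. Everything else---the reduction through Cauchy's mean value theorem and l'H\^opital's rule, and the divergence of $\xi$ at $b$---is routine given the $C^2$-regularity of $\xi$ and $g$ and the fact that $b$ is natural.
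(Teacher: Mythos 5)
Your proposal is correct and follows essentially the same route as the paper's proof: reduce everything to the limit $\int_a^v c\,dM / M[a,v]\to\bar c(b)$, treat the two cases of $M[a,b]$ separately, use l'H\^opital (with divergence of $\xi$ at $b$) for the limits with one endpoint fixed, and the generalized mean value theorem for the $(w,y)\to(b,b)$ limit. The only differences are cosmetic: you establish $\xi(y)-\xi(w)\to\infty$ probabilistically via $\EE_w[\tau_y]$ and inaccessibility of the natural boundary where the paper cites the analytic divergence $\int_{y_0}^b M[y_0,v]\,dS(v)=\infty$, and in the $M[a,b]=\infty$ case you use a direct sandwich argument where the paper applies l'H\^opital a second time.
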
 

At first glance, the limiting value in \lemref{lem2-Bg/Bxi:b} when $(w,y) \to (b,b)$
 is a bit surprising in the case that $b < \infty$ and $M[a, b]< \infty$.  The points $w$ and $y$ become very close to each other and one would intuitively expect the cycle lengths to be quite short.  However, $b$ being natural with $M[a, b]< \infty$ means that $b$ is non-attracting.  This, in turn, implies that, starting from $w$, a point close to $b$,   a substantial fraction of the paths of  the process $X_{0}$ must wander below $w$ before reaching the level $y$, yielding {\em in the limit}\/  \eqref{e1:g/xi-limit:b} the average of $c$ with respect to the stationary distribution $\pi$ on the entire interval $\I$.

\section{Impulse Control Problem}\label{sect:classical impulse}

In this section, we consider the long-term average impulse control problem of maximizing $J(R; p, K, \gamma)$ with fixed parameters $p$, $K$ and $\gamma$.  For notational simplicity, we omit the superscript $R$ in $X^{R}$ throughout the section and suppress the parametric dependence of the functions. 

\subsection{Existence of an Optimal Policy}\label{subsect-existence}
Inspired by our investigation of optimal inventory control problems in \cite{helm:18,HelmSZ-25}, we consider the $(w, y)$-policy $R^{(w,y)}$ of \defref{thresholds-policy},  which resets the state level to $w$ whenever it reaches or initially exceeds level $y$.  We have observed in Section \ref{sect-preliminaries} that the long-term average reward of the policy $R^{(w,y)}$ is given by $F(w, y)$ of \eqref{e:F_K}.  However, under the   policy $\mathfrak R$, $X = X_{0}$. 
Moreover, since $b$ is natural, we can regard $\mathfrak R$ as the limit of $(w, y)$ policy when $y\to b$, where $w\in \I$ is an arbitrarily fixed point.  {  Now in view of Table 7.1 on p.~250 of \cite{KarlinT81}, we have 
\begin{equation}  \label{eq:xi_limit_b}
\lim_{x\to b} (\xi(x) - \xi(y_{0}) ) = \int_{y_{0}}^{b} M[a, v] dS(v) \ge  \int_{y_{0}}^{b} M[y_{0}, v] dS(v) =\infty   \quad \forall y_{0}\in \I. 
\end{equation} 
Therefore, under Conditions  \ref{diff-cnd} and \ref{c-cond}, we can use Lemmas \ref{lem-limit:Bid/Bxi-a} and  \ref{lem2-Bg/Bxi:b}, along with \eqref{eq:xi_limit_b} to derive   
\begin{align}\label{e:reward-zero-policy}
\nonumber J(\mathfrak R) & = \liminf_{t\to \infty}t^{-1} \EE\left[ \int_0^t c(X_0(s))\, ds\right] = \lim_{y\to b} F(w, y)\\ & = \lim_{y\to b}  \left(\frac{p(y- w)}{\xi(y)- \xi(w)}- \frac{K }{\xi(y)- \xi(w)} + \frac{\gamma ( g(y) - g(w) )}{\xi(y)- \xi(w)} \right) = \gamma \bar c(b). 
\end{align} 
}

As a result of these observations, the requirement imposed on the parameters $p$, $K$ and $\gamma$  that it is better to intervene than not can be expressed in the following functional form.

\begin{cnd}\label{cond-interior-max}
There exists a pair $(\wdt w, \wdt y) \in \cR$ so that $F(\wdt w, \wdt y) > \gamma \bar c(b)$, where $\bar c(b)$ is defined in \eqref{eq:c-bar(b)}
\end{cnd}

   Condition \ref{cond-interior-max}  indicates that there exists at least one $(w, y)$-policy that outperforms the do-nothing policy $\mathfrak R$.   

We now turn to the maximization of $F$ over pairs $(w,y)\in \cR$, a nicely structured fractional optimization problem.

\begin{prop}\label{prop-Fmax} Assume Conditions \ref{diff-cnd}, \ref{c-cond},  
and \ref{cond-interior-max} hold.
 Then  there exists a   pair $(w^*,y^*)\in \cR$ so that 
 \begin{equation}
\label{e-Fmax}
F(w^{*}, y^{*}) = \sup_{(w, y)\in \cR} F(w, y). 
\end{equation} 
In other words, the function $F$ has an optimizing pair $(w^*,y^*)\in \cR$. Furthermore, an optimizing pair satisfies the following (rearranged) first-order conditions: 
\begin{itemize}
  \item[(i)] If $a$ is a natural point, then every optimizing pair  $(w^*,y^*) \in \cR$  satisfies $a < w^{*} < y^{*} < b$ and
\begin{equation}  \label{e-1st-order-condition}  
F(w^{*}, y^{*}) = \sup_{(w, y)\in \cR} F(w, y)=  h(w^{*}) = h(y^{*}),
\end{equation} 
where the function $h: \I\mapsto \R$ is defined by 
\begin{equation}
\label{e-h-fn-defn}
h(x; p, \gamma) = h(x) := \frac{\gamma g'(x) +p}{\xi'(x)}, 
 \quad x\in \I.
\end{equation}
  \item[(ii)] If $a$ is an entrance point, then an optimizing pair  $(w^*,y^*) \in \cR$ may have $w^*=a$; in such a case, we have 
 \begin{equation}\label{e2-1st-order-condition}  
h(a) \ge   {F(a, y^{*})} = \sup_{(w, y)\in \cR} F(w, y) = h(y^{*}). 
\end{equation} 
But if $w^*>a$, \eqref{e-1st-order-condition} still holds.  
\end{itemize} 

  \end{prop}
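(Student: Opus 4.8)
The plan is to first show that $\sup_{\cR}F$ is attained, by a compactness argument ruling out escape to $\partial\cR$, and then to read off the stated first-order conditions from $\nabla F$. Since $\xi,g\in C^{2}(\I)$ and $\xi$ is strictly increasing, $F$ is continuous on $\cR$; and the integral representation $F(w,y)=\int_{a}^{b}[r(x)-K\varrho]\,\nu(x)\,dx$ with $\int_{a}^{b}c(x)\,\nu(x)\,dx\le c(b)$, $\mathfrak z(w,y)\le z_{0}$ (\propref{lem-z=Bid/Bxi}(i),(iii)) and $K\varrho\ge 0$ gives $F\le\gamma c(b)+pz_{0}<\infty$. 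Put $M^{\ast}:=\sup_{\cR}F$; by \cndref{cond-interior-max}, $M^{\ast}\ge F(\wdt w,\wdt y)>\gamma\bar c(b)$, and $\gamma\bar c(b)>\gamma c(a)$ since $c(a)<c(b)$ (when $M[a,b]<\infty$, $\pi$ charges $\{c>c(a)\}$). Pick $(w_{n},y_{n})\in\cR$ with $F(w_{n},y_{n})\to M^{\ast}$ and, along a subsequence, $(w_{n},y_{n})\to(w^{\ast},y^{\ast})$ in the compact set $[a,b]\times[a,b]$ (with $b=\infty$ read as $[a,\infty]$), $a\le w^{\ast}\le y^{\ast}\le b$.

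I would rule out every limit point with $(w^{\ast},y^{\ast})\notin\cR$ by showing that then $\limsup_{n}F(w_{n},y_{n})<M^{\ast}$. \emph{(a)} If $y^{\ast}=b$: drop the nonpositive term $-K/(\xi(y_{n})-\xi(w_{n}))$ and use $\mathfrak z(w_{n},y_{n})\to 0$ (\lemref{lem-limit:Bid/Bxi-a}(ii), and its $(b,b)$-corner form if also $w^{\ast}=b$) and $\limsup_{n}\frac{g(y_{n})-g(w_{n})}{\xi(y_{n})-\xi(w_{n})}\le\bar c(b)$ (\lemref{lem2-Bg/Bxi:b}) to get $\limsup_{n}F(w_{n},y_{n})\le\gamma\bar c(b)$. \emph{(b)} If $w^{\ast}=y^{\ast}\in[a,b)$ — which when $w^{\ast}=a$ is possible only with $a$ an entrance point — then $y_{n}-w_{n}\to 0$ while $\xi(y_{n})-\xi(w_{n})\to 0^{+}$, so the numerator of $F$ tends to $-K<0$ and $F(w_{n},y_{n})\to-\infty$. \emph{(c)} If $a$ is natural and $w^{\ast}=a$ (the subcase $y^{\ast}=b$ being covered by (a)): then $\mathfrak z(w_{n},y_{n})\to 0$ (\lemref{lem-limit:Bid/Bxi-a}(i)) and, again dropping $-K/(\xi(y_{n})-\xi(w_{n}))\le 0$, $\limsup_{n}\frac{g(y_{n})-g(w_{n})}{\xi(y_{n})-\xi(w_{n})}\le c(a)$ (by \eqref{e:g/xi-limit:a} if $y^{\ast}\in(a,b)$, by \eqref{e0:g/xi-limit:a} if $y^{\ast}=a$), so $\limsup_{n}F(w_{n},y_{n})\le\gamma c(a)$. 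Since $\gamma\bar c(b)<M^{\ast}$ and $\gamma c(a)<\gamma\bar c(b)$, each case is a contradiction. Hence $(w^{\ast},y^{\ast})\in\cR$: either $a<w^{\ast}<y^{\ast}<b$, or $a$ is an entrance point and $a=w^{\ast}<y^{\ast}<b$, in which latter case $F$ extends continuously across the edge $\{w=a\}$ since $\xi(a),g(a)$ are finite ($\xi$ is bounded below at an entrance point and $0\le g'\le c(b)\,\xi'$) and $\xi(y^{\ast})>\xi(a)$. Continuity of $F$ then yields $F(w^{\ast},y^{\ast})=M^{\ast}$, which is \eqref{e-Fmax}.

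For the first-order conditions, write $F=N/D$ with $N(w,y)=p(y-w)-K+\gamma(g(y)-g(w))$ and $D(w,y)=\xi(y)-\xi(w)>0$; a direct computation using \eqref{e-h-fn-defn} gives, for $w,y\in(a,b)$,
\begin{equation*}
\partial_{w}F(w,y)=\frac{\xi'(w)}{D(w,y)}\bigl(F(w,y)-h(w)\bigr),\qquad
\partial_{y}F(w,y)=\frac{\xi'(y)}{D(w,y)}\bigl(h(y)-F(w,y)\bigr),
\end{equation*}
with $\xi'(x)=s(x)M[a,x]>0$ on $(a,b)$. By the existence step any optimizing pair satisfies $a\le w^{\ast}<y^{\ast}<b$. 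If $w^{\ast}>a$ — forced when $a$ is natural, since then $\cR\subset(a,b)^{2}$ — the pair is an interior maximum, so both partials vanish and, $\xi'$ and $D$ being positive, $F(w^{\ast},y^{\ast})=h(w^{\ast})=h(y^{\ast})$; with $F(w^{\ast},y^{\ast})=M^{\ast}$ this is \eqref{e-1st-order-condition}, proving (i) and the last clause of (ii). If $a$ is an entrance point and $w^{\ast}=a$: vanishing of $\partial_{y}F(a,y^{\ast})$ gives $h(y^{\ast})=F(a,y^{\ast})=M^{\ast}$; and since $F(\cdot,y^{\ast})$ is maximized at its left endpoint, the mean value theorem produces $v_{n}\downarrow a$ with $\partial_{w}F(v_{n},y^{\ast})\le 0$, i.e.\ $F(v_{n},y^{\ast})\le h(v_{n})$. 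Letting $n\to\infty$ and noting $h(x)\to h(a)\in(0,\infty]$ — because $g'(x)/\xi'(x)=\bigl(\int_{a}^{x}c\,dM\bigr)/M[a,x]\to c(a)$, so $h(a)=\gamma c(a)+p/\lim_{x\downarrow a}\xi'(x)$, equal to $+\infty$ when $\lim_{x\downarrow a}\xi'(x)=0$ — gives $h(a)\ge F(a,y^{\ast})$, which is \eqref{e2-1st-order-condition}.

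The crux is the boundary exclusion in the existence step: every avenue by which a maximizing sequence could leave $\cR$ — the corners $(a,a)$ and $(b,b)$, the edge $\{y=b\}$, the edge $\{w=a\}$ when $a$ is natural, and the interior diagonal — must be closed off, and this rests on combining the delicate boundary asymptotics of $\xi$, $g$ and $\mathfrak z$ (Lemmas~\ref{lem-limit:Bid/Bxi-a}--\ref{lem2-Bg/Bxi:b}) with the strict gap $M^{\ast}>\gamma\bar c(b)$ from \cndref{cond-interior-max}. The ensuing first-order computation is routine; its only subtlety is the bookkeeping of $h(a)$, which may equal $+\infty$ in the entrance case.
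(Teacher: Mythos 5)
Your overall strategy coincides with the paper's (the paper's proof, given in \cite{HelmSZ:25} and visible in a commented-out block of the source, proceeds by exactly this boundary-exclusion-plus-first-order-conditions scheme), and the first-order computation at the end is correct. However, there is a concrete gap in the existence step, precisely at the corner that the paper's own proof treats as its most delicate case: the vertex $(a,b)$ when $a$ is natural. In your case (a) you allow $w_n\to w^{\ast}$ to be arbitrary (including $w^{\ast}=a$) while $y_n\to b$, and you justify $\mathfrak z(w_n,y_n)\to 0$ and $\limsup_n \frac{g(y_n)-g(w_n)}{\xi(y_n)-\xi(w_n)}\le\bar c(b)$ by citing \lemref{lem-limit:Bid/Bxi-a}(ii) and \lemref{lem2-Bg/Bxi:b}. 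Those lemmas only assert the limits for \emph{fixed} $w_0$ as $y\to b$, or for $(w,y)\to(b,b)$; they say nothing about sequences with $w_n\downarrow a$ and $y_n\uparrow b$ simultaneously, where both $\xi(w_n)\to-\infty$ and $\xi(y_n)\to+\infty$. The same over-reach occurs, more mildly, in cases (a) and (c) whenever the coordinate you treat as ``fixed'' actually varies along the sequence. The conclusion is true, but it needs an argument: for instance, insert an intermediate point $x_0$ and use the mediant inequality $\frac{A_1+A_2}{B_1+B_2}\le\max\{\frac{A_1}{B_1},\frac{A_2}{B_2}\}$ (valid for $B_1,B_2>0$) applied to $g(y)-g(x_0)+g(x_0)-g(w)$ over $\xi(y)-\xi(x_0)+\xi(x_0)-\xi(w)$, together with \lemref{lem1-Bg/Bxi:a} at $a$ and \lemref{lem2-Bg/Bxi:b} at $b$, and a separate estimate for $\frac{p(y-w)-K}{\xi(y)-\xi(w)}$ via \eqref{e-sM-infty}. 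This is exactly the content of the paper's case for the vertex $(a,b)$, and without it your proof does not rule out a maximizing sequence drifting into that corner.

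Two smaller points. First, your parenthetical that $h(a)$ may equal $+\infty$ is incorrect: by \eqref{e:xi':a}, $\lim_{x\downarrow a}1/\xi'(x)=\mu(a)\in[0,\infty)$, so $h(a)=\gamma c(a)+p\,\mu(a)$ is always finite; this does not affect the validity of \eqref{e2-1st-order-condition}. Second, your one-sided argument at $w^{\ast}=a$ in the entrance case (mean value theorem producing points $u_n\downarrow a$ with $F(u_n,y^{\ast})\le h(u_n)$) is a correct and slightly more careful rendering of the paper's bare assertion that $\partial_w F(a,y^{\ast})\le 0$.
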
 
{  The proof of Proposition \ref{prop-Fmax} follows a similar approach to that in \cite{helm:18}, analyzing the boundary behavior of  $F$, and is given in Appendix \ref{sect:Appen-B}.}

In preparation for the analyses that follow, it is necessary to establish certain properties of $h$  and its  relationship with $F$.
Detailed  calculations using \eqref{e:xi-derivatives}, \eqref{e:g-derivatives}, and \eqref{e:1/s-identity} reveal that for any $x\in \I$, recalling   the revenue rate  function $r$ defined in \eqref{e:r_p}, we have
 \begin{align} \label{sect 2-e-h-defn} 
 h(x)  = \frac{s(x) (\int_{a}^{x} \gamma c(u)\, dM(u) + \frac p{s(x)})}{s(x) M[a,x]} = \frac{\int_{a}^{x} (\gamma c(u) + p \mu(u) )\, dM(u) }{M[a,x]}= \frac{\int_{a}^{x}   r(u)\,  m(u)\, du}{M[a, x]},
\end{align}
 and 
\begin{align}\label{e:h'-expression}  
h'(x) &=  \frac{m(x)}{M[a,x]} (r(x) - h(x))  = \frac{m(x)}{M^{2}[a,x]} \int_{a}^{x} [r(x) - r(u)]\, dM(u).
\end{align}  

The representation of $h$ in \eqref{sect 2-e-h-defn} will be shown in \sectref{sect:singular} to be related to the payoff of a particular singular control policy.

Next, for any 
$(w, y) \in \cR$, since  $K > 0$ and $\xi'(x)> 0$ for all $x\in \I$, we apply the generalized  mean value theorem to observe that 
\begin{align} \label{e:F<ell}
 F(w, y) 
 & <   \frac{\gamma ( g(y)-g(w) )+ p(y-w)}{\xi(y)-\xi(w)}  = \frac{\gamma g'(\theta) + p }{\xi'(\theta)}   = h(\theta) \le \sup_{x\in \I} h(x), 
\end{align} 
where $\theta \in (w, y)$. 

The next lemma presents the limiting behavior of  the function $h$ at $b$.  

 \begin{lem}\label{lem-ell-limit at b} 
 Assume Condition \ref{c-cond}. Then $\lim_{x\to b}h(x ) = \gamma \bar c(b)$.  
 \end{lem}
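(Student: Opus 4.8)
The plan is to decompose $h$ according to its two sources of reward — the running reward $\gamma c$ and the drift reward $p\mu$ — and to show that the drift contribution vanishes as $x\to b$ while the running-reward contribution converges to $\bar c(b)$. Starting from the representation \eqref{sect 2-e-h-defn}, I would write, for $x\in\I$,
\[
 h(x) \;=\; \frac{\int_a^x r(u)\,m(u)\,du}{M[a,x]} \;=\; \gamma\,\frac{\int_a^x c(u)\,dM(u)}{M[a,x]} \;+\; p\,\frac{\int_a^x \mu(u)\,dM(u)}{M[a,x]}.
\]
By the identity \eqref{e:1/s-identity}, $\int_a^x\mu\,dM = 1/s(x)$, and by \eqref{e:xi-derivatives}, $s(x)M[a,x]=\xi'(x)$; hence the second summand equals $p/\xi'(x)$. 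Since $b$ is a natural boundary, \eqref{e-sM-infty} gives $\xi'(x)\to\infty$ as $x\to b$, so this summand tends to $0$. It therefore suffices to prove
\[
 \lim_{x\to b}\,\frac{\int_a^x c(u)\,dM(u)}{M[a,x]} \;=\; \bar c(b).
\]

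For this limit I would split along the two cases in \eqref{eq:c-bar(b)}. When $M[a,b]<\infty$: by Condition~\ref{c-cond} one has $0\le c\le c(b)<\infty$, so dominated convergence yields $\int_a^x c\,dM\to\int_a^b c\,dM$ while $M[a,x]\to M[a,b]\in(0,\infty)$, and the quotient tends to $\int_a^b c\,dM/M[a,b]=\langle c,\pi\rangle=\bar c(b)$. When $M[a,b]=\infty$: the quotient is a weighted average of the nondecreasing, continuous function $c$, so the monotonicity bound $\int_a^x c\,dM\le c(x)\,M[a,x]$ gives $\limsup_{x\to b}\le\lim_{x\to b}c(x)=c(b)$; conversely, given $\e>0$ choose $x_{\e}\in\I$ with $c(u)\ge c(b)-\e$ for all $u\in[x_{\e},b)$, so that $\int_a^x c\,dM\ge(c(b)-\e)\,(M[a,x]-M[a,x_{\e}])$ and, dividing by $M[a,x]$ and letting $x\to b$ (so that $M[a,x]\to\infty$ while $M[a,x_{\e}]$ stays finite by \eqref{e:M(a-y)-finite}), $\liminf_{x\to b}\ge c(b)-\e$; finally let $\e\downarrow0$. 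In both cases the quotient converges to $\bar c(b)$, and combining with the vanishing drift term gives $\lim_{x\to b}h(x)=\gamma\bar c(b)$.

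I do not expect a serious obstacle; the core is a routine Cesàro/Toeplitz-type estimate. The one point requiring care is the drift term: when $M[a,b]=\infty$ the quantity $1/s(x)=\int_a^x\mu\,dM$ need not stay bounded near $b$, and it is exactly the natural-boundary hypothesis \eqref{e-sM-infty} (morally, the fact that the diffusion's expected passage times blow up near $b$) that forces $p/\xi'(x)\to0$ and lets the finite- and infinite-mass cases be treated uniformly. Note that Condition~\ref{diff-cnd} is standing, so all of \eqref{e:1/s-identity}, \eqref{e:xi-derivatives}, \eqref{e-sM-infty}, as well as the stationary distribution $\pi$ invoked when $M[a,b]<\infty$, are available even though the lemma states only Condition~\ref{c-cond} explicitly.
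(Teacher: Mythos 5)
Your proposal is correct and follows essentially the same route as the paper: isolate the drift contribution $p/\xi'(x)$, kill it with \eqref{e-sM-infty}, and identify the remaining quotient $\int_a^x c\,dM/M[a,x]$ with $\bar c(b)$ by splitting on whether $M[a,b]$ is finite. The only (immaterial) difference is that in the case $M[a,b]=\infty$ the paper invokes L'H\^opital's rule, whereas you give a direct squeeze using the monotonicity of $c$ and \eqref{e:M(a-y)-finite}; both are valid.
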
  

 \begin{proof} 
 Note that  $\lim_{x\to b} \frac{p}{\xi'(x)} = 0$ thanks to  \eqref{e-sM-infty}. Therefore it remains to show that   $\frac{\gamma g'(x)}{\xi'(x)}$ converges as $x\to b$ to  $\gamma c(b)$ (if $M[a, b] =\infty$) or $\lan \gamma c, \pi\ran$ (if   $M[a, b] < \infty$). To this end,  we note that   the expressions for $\xi'$ and $g'$ in \eqref{e:xi-derivatives} and \eqref{e:g-derivatives}  allow  us to write 
\begin{equation}  \label{g'-xi'}
\frac{\gamma g'(x)}{\xi'(x)}= \frac{\int_{a}^{x} \gamma c(u)\, dM(u)}{ M[a, x]};    
\end{equation}  
as $x\to b$,  the last expression converges to  $\gamma c(b)$ (if $M[a, b] =\infty$) or $\lan \gamma c, \pi\ran$ (if   $M[a, b] < \infty$) thanks to \eqref{e1.1-lem35-pf} and \eqref{e1.2-lem35-pf} in the proof of Lemma  \ref{lem2-Bg/Bxi:b}. 
\end{proof}

 Motivated by Proposition \ref{prop-Fmax}, we now consider the impulse reward potential  
\begin{equation} \label{e:G_p}
G(x): = F^{*} \xi(x) - \gamma g(x),\ \ x\in \I,
\end{equation}  
where $ F^{*}  = F(w^{*}, y^{*})$ is the maximum value of the  function $ F$ defined  in \eqref{e:F_K}, and $(w^{*}, y^{*})$ is an optimizing   pair  for  $ F$.    The potential $G$ can be used to enlarge the class of admissible policies (see \remref{rem1-transversality}) and is intimately related to the relative value function for the impulse control problem.

 First, observe that  $G$ is a strictly increasing function. To see this, we write 
$$G'(x) = \xi'(x) \left(F^{*} - \frac{\gamma g'(x)}{\xi'(x)}\right) = \xi'(x) (F^{*} - h_{0}(x)), $$ 
where $h_{0}$ is defined in \eqref{e-h-fn-defn} with $p=0$. Using \eqref{e:h'-expression} and Condition \ref{c-cond}, $h_{0}$ is an increasing  function. \lemref{lem-ell-limit at b} further says that $\lim_{x\to b} h_{0}(x) = \gamma \bar c(b)$.  This, together with Proposition~\ref{prop-Fmax}, implies that  $F^{*} - h_{0}(x) > 0$ for any $x\in \I$.  Recall that $\xi'(x) > 0$ and hence $G'(x)  > 0$ for any $x\in \I$. This establishes the claim that  $G$ is  strictly increasing.  

 Next, using \eqref{e:AxiAg} and the definition of $F^{*} $, we can immediately verify that $G$ is twice continuously differentiable and satisfies 
\begin{equation}   
\label{e:AG_BG}
\begin{cases}
   A G(x) + \gamma c(x) - F^{*} = 0,  &  \ \forall  x \in \I,   \\ 
  G(w) +  p(y-w) -   K - G(y) \le  0 ,    &  \ \forall (w,y) \in \cR, \\
  G(w^*) +  p(y^*-w^*) -   K - G(y^*) =  0. & 
\end{cases}  
\end{equation}  
Note that, defining $ \mathcal M v(x) : = \sup_{w \le x}\{ v(w)+ p(x-w) -K\}$,  $(G, F^{*})$ is a solution to the  quasi-variational inequalities (QVI)
\begin{equation}\label{e:qvi} 
\max\{ A v(x) + \gamma c(x) -\lambda, \mathcal M v(x) - v(x) \} =0, \quad \forall x\in \I
\end{equation} 
for the long-term average impulse control problem \eqref{e:reward-fn}. 

We also point out that the third line of \eqref{e:AG_BG} can be written as
$$p B\id(w^*,y^*) + \gamma  Bg(w^*,y^*) - F^* B\xi(w^*,y^*) = K,$$
which is a balance equation exhibiting a zero-profit equilibrium in which the expected excess value exactly compensates for the fixed cost.

\comment{\begin{defn}\label{defn-Ap} \footnote{If we keep \defref{admissible-for-p}, then this definition can be deleted, though the references need to be updates.}
Let ${\A}_p \subset {\cal A}$ 
consist of   impulse control policies $Q = (\tau,Y)\in \A$ for which 
the transversality condition 
\begin{equation}
\label{eq-transversality}
\liminf_{t\to \infty} \liminf_{n\to \infty}t^{-1} \EE[G_{p}(X(t\wedge \beta_{n}))] \ge  0
\end{equation}
holds, where for each $n\in \NN$,  $\beta_{n}: = \inf\{t\ge 0: X(t) \not\in (a_{n}, b_{n} )\}$, where $\{a_{n}\}\subset (a, b)$ is a decreasing sequence with $\lim_{n\to\infty} a_{n} =a$ and $\{b_{n}\}\subset (a, b)$ is an increasing sequence with $\lim_{n\to\infty} b_{n} =b$. Note that $\lim_{n\to\infty} \beta_{n} =\infty$ a.s.  because  $a$ is non-attracting and $b$ is natural thanks to Condition \ref{diff-cnd}. 
\end{defn}}

\begin{lem}\label{lem-Gp-limit} Assume Conditions \ref{diff-cnd}, \ref{c-cond}, and \ref{cond-interior-max} hold. Then for any $R\in  \AI$, we have  
\begin{equation}  \label{eq-transversality}
  \liminf_{t\to \infty} \liminf_{n\to \infty}t^{-1} \EE[G(X(t\wedge \beta_{n}))] \ge  0,
  \end{equation}
 where $\beta_n$ is given in \defref{admissible-policy}(iv)(b).
\end{lem}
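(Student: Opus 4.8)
The plan is to bound $G$ below by a fixed multiple of $-\xi^{-}$ and then invoke the transversality condition \eqref{eq-xi-transversality}, which every $R\in\AI$ satisfies. First observe that \eqref{eq-xi-transversality} is indeed available here: when $a$ is a natural boundary it is part of the definition of $\AI$ (Definition~\ref{admissible-policy}(iv)(b)); when $a$ is an entrance boundary the potential $\xi$ is bounded below (namely $\xi\ge\xi(a)>-\infty$, so $\xi^{-}\le\xi^{-}(a)<\infty$), hence $t^{-1}\EE[\xi^{-}(X(t\wedge\beta_{n}))]\le t^{-1}\xi^{-}(a)\to 0$ and \eqref{eq-xi-transversality} holds trivially. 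This disposes of the remark made after Definition~\ref{admissible-policy}.

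The key step is the pointwise estimate
\[
 G(x)\ \ge\ -C\,\xi^{-}(x),\qquad x\in\I,\quad\text{where } C:=F^{*}-\gamma c(a)>0 .
\]
Recall from the discussion following \eqref{e:G_p} that $G$ is twice continuously differentiable with $G'(x)=\xi'(x)\bigl(F^{*}-h_{0}(x)\bigr)$, where $h_{0}$ is the function $h$ of \eqref{e-h-fn-defn} taken with $p=0$; by \eqref{sect 2-e-h-defn}, $\gamma^{-1}h_{0}(x)=\int_{a}^{x}c(u)\,dM(u)/M[a,x]$ is the $M$-weighted mean of $c$ over $[a,x]$. Since $c$ is increasing (Condition~\ref{c-cond}) this mean is at least $c(a)$, so $h_{0}(x)\ge\gamma c(a)$; moreover $h_{0}$ is increasing (by \eqref{e:h'-expression} with $p=0$) with $\lim_{x\to b}h_{0}(x)=\gamma\bar c(b)$ by Lemma~\ref{lem-ell-limit at b}, so $h_{0}(x)\le\gamma\bar c(b)$. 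Combining these with $F^{*}>\gamma\bar c(b)$, which holds by Condition~\ref{cond-interior-max} and Proposition~\ref{prop-Fmax}, gives $0<F^{*}-h_{0}(x)\le F^{*}-\gamma c(a)=C$ and hence $0<G'(x)\le C\,\xi'(x)$ for all $x\in\I$. Since $G(x_{0})=\xi(x_{0})=0$, integrating yields $G(x)\ge 0$ for $x\ge x_{0}$, while for $x<x_{0}$,
\[
 G(x)=-\int_{x}^{x_{0}}G'(u)\,du\ \ge\ -C\int_{x}^{x_{0}}\xi'(u)\,du\ =\ C\,\xi(x)\ =\ -C\,\xi^{-}(x),
\]
using $\xi<0$ on $(a,x_{0})$; the claimed estimate, equivalently $G^{-}\le C\,\xi^{-}$ on $\I$, follows.

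To conclude, I would note that $t^{-1}\EE[G(X(t\wedge\beta_{n}))]\ge -C\,t^{-1}\EE[\xi^{-}(X(t\wedge\beta_{n}))]$ for every $t,n$ (both expectations being finite, since $\xi^{-}(X(t\wedge\beta_{n}))\le\xi^{-}(a_{n})<\infty$). Taking $\liminf_{n\to\infty}$ and then $\liminf_{t\to\infty}$ gives
\[
 \liminf_{t\to\infty}\liminf_{n\to\infty}t^{-1}\EE[G(X(t\wedge\beta_{n}))]\ \ge\ -C\,\limsup_{t\to\infty}\limsup_{n\to\infty}t^{-1}\EE[\xi^{-}(X(t\wedge\beta_{n}))]\ =\ 0 ,
\]
the final equality being \eqref{eq-xi-transversality}, which forces the iterated $\limsup$ to vanish. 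This is exactly \eqref{eq-transversality}.

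I expect the pointwise bound $G^{-}\le C\,\xi^{-}$ to be the crux of the argument: it rests on pinning $h_{0}$ between $\gamma c(a)$ and $\gamma\bar c(b)$ and, most importantly, on using Condition~\ref{cond-interior-max} to keep $F^{*}-h_{0}$ uniformly bounded away from zero, so that $G'$ and $\xi'$ are comparable throughout $\I$; once $0<G'\le C\xi'$ is in hand, converting it into a bound of $G$ against $\xi^{-}$ is a routine integration anchored at $x_{0}$, and the passage to the limits is then automatic from the transversality hypothesis.
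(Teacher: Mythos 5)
Your proof is correct, and it reaches the same destination as the paper's --- a lower bound of the form $G\ge -\,\mathrm{const}\cdot\xi^{-}-\mathrm{const}$ followed by an appeal to the transversality condition \eqref{eq-xi-transversality} --- but the mechanism for the pointwise bound is genuinely different. The paper splits into two cases: when $a$ is an entrance boundary it shows $G$ is uniformly bounded below (using $\xi(a)>-\infty$, $g\le 0$ on $(a,x_0]$, and the positivity of $F^*-\gamma g(x)/\xi(x)$ near $b$ from Lemma~\ref{lem2-Bg/Bxi:b}) and concludes by Fatou's lemma; when $a$ is natural it decomposes $G(x)=[F^*\xi(x)-\gamma g(x)]I_{\{x>x_0\}}+\gamma g^-(x)-F^*\xi^-(x)$, bounds the first term below by a constant via the same ratio asymptotics at $b$, and obtains $G\ge -C-F^*\xi^-$. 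You instead prove the single global estimate $0<G'\le C\,\xi'$ with $C=F^*-\gamma c(a)$ by pinning $h_0$ between $\gamma c(a)$ and $\gamma\bar c(b)<F^*$ (monotonicity of $h_0$ plus Lemma~\ref{lem-ell-limit at b}), and integrate from $x_0$ to get $G\ge -C\,\xi^-$ with no additive constant and no case split; you then verify that \eqref{eq-xi-transversality} holds automatically in the entrance case (exactly the observation the paper defers to Remark~\ref{rem-admissible-policy}). Your route is arguably cleaner and unifies the two boundary regimes, at the cost of using the explicit derivative formula for $G$ rather than only the asymptotics of $\gamma g/\xi$; both rest on the same essential inputs ($h_0$ nondecreasing, $\lim_{x\to b}h_0=\gamma\bar c(b)$, and $F^*>\gamma\bar c(b)$ from Condition~\ref{cond-interior-max}). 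One cosmetic caveat: your parenthetical claim $\xi^-(X(t\wedge\beta_n))\le\xi^-(a_n)$ can fail at a jump time that overshoots $a_n$, but this finiteness remark is not needed --- the inequality $\liminf_n t^{-1}\EE[G]\ge -C\limsup_n t^{-1}\EE[\xi^-]$ holds in $[-\infty,\infty]$ and the transversality condition controls the right-hand side for large $t$ and $n$ regardless.
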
 

\begin{proof} We establish this transversality condition in two cases. 
  
  First, if $a$ is an entrance point, then $\xi(a)> -\infty$ thanks to Table~6.2 on p.~232 of \cite{KarlinT81}. This, together with the fact that $\xi$ is monotone increasing, implies that   $ F^{*} \xi(x)$ is uniformly bounded from below. Since $g(x) \le 0$ for $x\le x_{0}$ and $\gamma > 0$, $G(x) = F^{*} \xi(x) - \gamma g(x)  $  is uniformly bounded from below on $(a, x_{0}]$. We next examine the behavior of $G$ at the right boundary $b$.  Using Lemma \ref{lem2-Bg/Bxi:b} and Proposition \ref{prop-Fmax}, we have 
  \begin{displaymath}
    \lim_{x\to b} \left(F^{*} - \frac{\gamma g(x)}{\xi(x)}\right)  > 0.
    \end{displaymath} 
    Thus for all $x$ in a neighborhood of $b$, $G(x) =(F^{*} - \frac{\gamma g(x)}{\xi(x)} ) \xi(x) > 0$. Therefore it follows  that $G$ is uniformly bounded from below on $(a, b)$. For each $n\in \NN$, let $\beta_{n}$ be as in \defref{admissible-policy}. 
    Since $G$ is uniformly bounded from below on $(a, b)$, and recalling  that $\beta_{n}\to \infty$ a.s. as $n\to \infty$,  we can apply Fatou's lemma to see that 
    \begin{displaymath}
   \liminf_{t\to \infty} \liminf_{n\to \infty}t^{-1} \EE[G(X(t\wedge \beta_{n}))] \ge  \liminf_{t\to \infty}  t^{-1} \EE[G(X(t))] \ge 0,
   \end{displaymath}
   establishing the desired  transversality condition.  This leads to \eqref{eq-transversality}.

 Second, if $a$ is a natural point, we can write 
 \begin{align}  \label{eq-Gp-bound}
    \nonumber G(x) & = F^{*} \xi(x) - \gamma g(x) =  [F^{*} \xi^+(x) - \gamma g^+(x)]  + \gamma g^-(x) -  F^{*} \xi^-(x)\\ 
    & =   [F^{*} \xi(x) - \gamma g(x)] I_{\{x > x_0\}}  + \gamma g^-(x) -  F^{*} \xi^-(x).
  \end{align}
   Thanks to Lemma \ref{lem2-Bg/Bxi:b} and Proposition \ref{prop-Fmax}, there exists a $y_0 \in (x_0, b)$ so that $F^{*} -\frac{\gamma g(x)}{\xi(x)} > \frac{F^*- \gamma \bar c(b)}{2} > 0$ for all $x\in (y_0, b)$. Note that $\xi(x) >0$ for $x > x_0$ and $F^{*} \xi(x) - \gamma g(x)$ is uniformly bounded for $x\in [x_0, y_0]$. Therefore it follows that there exists a constant $C> 0$ so that 
 \begin{align*}
     [F^{*} \xi(x) - \gamma g(x)] I_{\{x \ge  x_0\}}& = [F^{*} \xi(x) - \gamma g(x)] I_{\{x \in[  x_0, y_0]\}} + \left[F^{*} -\frac{\gamma g(x)}{\xi(x)}\right] \xi(x) I_{\{x > y_0\}} \ge -C.
   \end{align*} 
Plugging this observation into \eqref{eq-Gp-bound} and noting that $g^-(x) \ge 0$, we  derive
\begin{displaymath} 
    G(x) \ge -C + \gamma g^-(x) -  F^{*} \xi^-(x) \ge -C   -  F^{*} \xi^-(x), \quad \forall x\in (a, b).
\end{displaymath} 
  For any $R\in \AI$,   \eqref{eq-xi-transversality} holds, which, together with the last displayed equation, implies \eqref{eq-transversality}. The proof is complete. 
\end{proof}

\begin{prop}\label{prop-optimal-control}
Assume Conditions \ref{diff-cnd}, \ref{c-cond},  
and \ref{cond-interior-max} hold. Then for any admissible policy
 $R\in \A_{\rm Imp}$, 
  we have
\begin{equation}  \label{e-reward-bdd}   
J(R) \le  F^{*} = F(w^{*}, y^{*}), 
\end{equation} 
and the $(w^{*}, y^{*})$-strategy is an optimal policy, where $(w^{*}, y^{*})\in \cR$ is a maximizing pair for the function $F$ defined in \eqref{e:F_K}, whose existence is derived in Proposition \ref{prop-Fmax}. 
\end{prop}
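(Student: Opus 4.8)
The plan is a verification argument built on the quasi-variational structure \eqref{e:AG_BG} of the pair $(G,F^{*})$, with the impulse reward potential $G$ of \eqref{e:G_p} serving as a relative value function. First I would dispose of the routine cases. If $J(R) = -\infty$, then \eqref{e-reward-bdd} is trivial. If $R \in \AF$, then $J(R) = \gamma\bar c(b)$ by \eqref{e:reward-zero-policy} together with the fact (noted before \eqref{generator}) that every $R \in \AF$ has the same long-term average reward as the do-nothing policy, whereas \cndref{cond-interior-max} with \propref{prop-Fmax} gives $F^{*} = \sup_{(w,y)\in\cR}F(w,y) \ge F(\wdt w,\wdt y) > \gamma\bar c(b)$, so \eqref{e-reward-bdd} again holds. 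Hence it suffices to treat $R \in \AI$ with $J(R) > -\infty$. Writing $\Phi(t) := \int_0^{t}\gamma c(X(s))\,ds + \sum_k I_{\{\tau_k\le t\}}(pY_k - K)$, $N(t)$ for the (a.s.\ finite, since $\tau_k\uparrow\infty$) number of interventions by time $t$, and $Z(t) := \sum_k I_{\{\tau_k\le t\}}Y_k$, I may additionally assume $\EE[N(t)] < \infty$ and $\EE[Z(t)] < \infty$ for each $t$: otherwise, since $0\le\int_0^{t}\gamma c(X(s))\,ds\le\gamma c(b)\,t$ and $\sum_k I_{\{\tau_k\le t\}}(pY_k-K)\le pZ(t) - KN(t)$, one would have $\EE[\Phi(t)] = -\infty$ for all large $t$ and hence $J(R) = -\infty$.

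The core step is Dynkin's formula for $G(X(t\wedge\beta_n))$, with the localizing times $\beta_n$ of \defref{admissible-policy}. Since $G \in C^{2}(\I)$: between interventions the drift contribution is $AG(X(s)) = F^{*} - \gamma c(X(s))$ by the first line of \eqref{e:AG_BG}; the stochastic integral $\int_0^{t\wedge\beta_n}G'(X(s))\sigma(X(s))\,dW(s)$ is a genuine martingale because on $[0,t\wedge\beta_n]$ the state stays in $[a_n,b_n]$, where $G'\sigma$ is bounded; and at each intervention time $\tau_k\le t\wedge\beta_n$ the jump obeys $G(X(\tau_k)) - G(X(\tau_k-))\le K - pY_k$, by the second line of \eqref{e:AG_BG} applied with $(w,y) = (X(\tau_k-) - Y_k,\,X(\tau_k-))\in\cR$. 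Taking expectations (the martingale term vanishes) and bounding the jump sum gives
\begin{equation*}
\EE[\Phi(t\wedge\beta_n)] \;\le\; G(x_0) + F^{*}\,\EE[t\wedge\beta_n] - \EE[G(X(t\wedge\beta_n))] \;\le\; G(x_0) + F^{*}\,t - \EE[G(X(t\wedge\beta_n))].
\end{equation*}

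Next I would remove the localization and pass to the long-run limit. Dividing by $t$ and letting $n\to\infty$: the left-hand side tends to $\EE[\Phi(t)]$, by monotone convergence for the nonnegative running reward and by applying monotone convergence separately to the positive and negative parts of $\sum_k I_{\{\tau_k\le t\wedge\beta_n\}}(pY_k-K)$ (each nondecreasing in $n$, each of finite expectation by the reductions above), while on the right only the $n$-dependent term remains. This yields $t^{-1}\EE[\Phi(t)] \le t^{-1}G(x_0) + F^{*} - \liminf_{n\to\infty}t^{-1}\EE[G(X(t\wedge\beta_n))]$; taking $\liminf_{t\to\infty}$ and invoking the transversality bound of \lemref{lem-Gp-limit} (using $\limsup\ge\liminf$ when unwinding the nested limits) gives $J(R)\le F^{*}$, i.e.\ \eqref{e-reward-bdd}. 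For the matching lower bound, $R^{(w^{*},y^{*})}$ is admissible by \lemref{lem-(wy)admissible}, and the renewal computation of \sectref{sect-preliminaries} gives $J(R^{(w^{*},y^{*})}) = F(w^{*},y^{*}) = F^{*}$; combined with \eqref{e-reward-bdd}, this shows $\sup_{R\in\A}J(R) = F^{*}$ and that $R^{(w^{*},y^{*})}$ attains it.

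I expect the main difficulty to lie not in the Itô/QVI computation, which is essentially mechanical given \eqref{e:AG_BG}, but in the limit passage of the last paragraph: coordinating the $n\to\infty$ removal of the localization $\beta_n$ with the $t\to\infty$ long-run average, controlling the possibly negative jump rewards $pY_k-K$, and correctly threading the one-sided estimate of \lemref{lem-Gp-limit} through the nested $\liminf/\limsup$ — which is precisely why transversality was built into admissibility in \defref{admissible-policy}(iv)(b).
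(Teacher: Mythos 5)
Your proposal is correct and follows essentially the same route as the paper: dispose of $\AF$ via $J(R)=\gamma\bar c(b)<F^{*}$, then run a verification argument with It\^o's formula applied to the impulse reward potential $G$, use the QVI structure \eqref{e:AG_BG} to bound the drift and jump contributions, remove the localization by monotone convergence, and invoke the transversality estimate of \lemref{lem-Gp-limit}, with equality for $R^{(w^{*},y^{*})}$ from the third line of \eqref{e:AG_BG}. The only cosmetic difference is the order of operations in the limit passage: the paper keeps the exact It\^o identity, observes that both $\gamma c\ge 0$ and $G(X(\tau_k-))-G(X(\tau_k))\ge 0$ (since $G$ is strictly increasing) so monotone convergence applies to the whole nonnegative expression before the jump inequality is used, whereas you apply the jump bound first and then justify the limit by splitting positive and negative parts under auxiliary finiteness reductions — both are valid.
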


\begin{proof} We have observed that for any $R\in  \AF$, $J(R) = J(\mathfrak R) = \gamma \bar c(b)$, which is less than   $F^{*}$ thanks to Proposition \ref{prop-Fmax}.  The rest of the proof is focused on impulse controls with an infinite number of interventions.  
 
Fix an arbitrary admissible policy $R\in   \AI$, and denote by $X$ the controlled process. For each $n\in \NN$, define $\beta_{n}$  as in  Definition \ref{admissible-policy}.  By the It\^o formula and \eqref{e:AG_BG}, we have 
\begin{align*} 
\EE&_{x_0}[G(X(t\wedge \beta_{n}))] & \\
& = G(x_0)   + \EE_{x_0} \left[\int_{0}^{t\wedge \beta_{n}} AG(X(s))\, ds + \sum_{k=1}^{\infty} I_{\{\tau_{k}\le t\wedge \beta_{n}\}} BG(X(\tau_{k}-), X(\tau_{k}))\right] 
 \\
& =  G(x_0) + F^{*}\EE_{x_0} [t\wedge \beta_{n}]  
- \EE_{x_0} \left[\int_{0}^{t\wedge \beta_{n}} \gamma c(X(s))ds - \sum_{k=1}^{\infty} I_{\{\tau_{k}\le t\wedge \beta_{n}\}}  BG(X(\tau_{k}-), X(\tau_{k}))\right].
\end{align*} 
Rearrange the terms 
to obtain 
 \begin{align*} 
&  G(x_0)  +  F^{*}\EE_{x_0} [t\wedge \beta_{n}] -    \EE_{x_0}[G(X(t\wedge \beta_{n}))] \\ 
&\ \quad  =   \EE_{x_0} \left[\int_{0}^{t\wedge \beta_{n}} \gamma c(X(s))ds + \sum_{k=1}^{\infty} I_{\{\tau_{k}\le t\wedge \beta_{n}\}} [G(X(\tau_{k}-)) - G( X(\tau_{k}))] \right] .
\end{align*} 
Since $\gamma c\ge 0$ and $G(X(\tau_{k}-)) - G( X(\tau_{k})) \ge 0$ for each $k\in \mathbb N$ because $G$ is a strictly increasing function, 
 passing to the limit as $n\to \infty$, we have from the monotone convergence theorem that 
 \begin{align} \label{eq25} 
   \nonumber G(x_0)   & +  F^{*}t  - \liminf_{n\to\infty}   \EE_{x_0}[G(X(t\wedge \beta_{n}))]\\ 
   & =   \EE_{x_0} \left[\int_{0}^{t } \gamma c(X(s))ds + \sum_{k=1}^{\infty} I_{\{\tau_{k}\le t \}} [G(X(\tau_{k}-)) - G( X(\tau_{k}))] \right] \\ 
  \nonumber & \ge \EE_{x_0} \left[\int_{0}^{t } \gamma c(X(s))ds + \sum_{k=1}^{\infty} I_{\{\tau_{k}\le t \}} [p(X(\tau_{k}-) -   X(\tau_{k})) -K] \right],  
\end{align}  where the last inequality follows from the second line of \eqref{e:AG_BG}. 
Now divide both sides by $t$ and then let $t\to \infty$ to obtain 
\begin{align*}  
F^{*} &- \liminf_{t\to\infty} \liminf_{n\to\infty} \frac1t \EE_{x_0}[G(X(t\wedge \beta_{n}))] \\  
& \ge  \limsup_{t\to\infty} \frac 1t \EE_{x_0} \left[\int_{0}^{t } \gamma c(X(s))ds + \sum_{k=1}^{\infty} I_{\{\tau_{k}\le t \}}  [p(X(\tau_{k}-) -   X(\tau_{k})) -K] \right].
\end{align*}  
Finally we   use  \eqref{eq-transversality}    to obtain 
\begin{equation}\label{e:reward-bdd-1}  
 F^{*} \ge \limsup_{t\to\infty} \frac 1t \EE_{x_0} \left[\int_{0}^{t } \gamma c(X(s))ds + \sum_{k=1}^{\infty} I_{\{\tau_{k}\le t \}} [p(X(\tau_{k}-) -   X(\tau_{k})) -K] \right].
\end{equation} 
 This establishes \eqref{e-reward-bdd}.  On the other hand, we have shown in Section \ref{sect-preliminaries} that $R^{(w^*,y^*)}\in \A_{\rm Imp}$ with $J(R^{(w^*,y^*)}) = F^*$.   Hence  $R^{(w^*,y^*)}$ is an optimal policy. This completes the proof.  
\end{proof}

\begin{rem}\label{rem1-transversality}  
When $a$ is a natural boundary, $\xi(a)= -\infty$ and thus it follows from \eqref{e:g/xi-limit:a}, Condition \ref{cond-interior-max}, and Proposition \ref{prop-Fmax} that \begin{displaymath}
  \lim_{x\to a} G(x) = \lim_{x\to a}\left (F^{*} - \frac{\gamma g(x)}{\xi(x)} \right) \xi(x) =  \lim_{x\to a} (F^{*} -  \gamma c(a)) \xi(x) =-\infty. 
  \end{displaymath} 
   The  transversality condition \eqref{eq-xi-transversality} is therefore introduced to verify the optimality in $\A_{\rm Imp}$ of the $(w^{*}, y^{*})$-impulse strategy.  In addition, a close examination of the proof of Proposition \ref{prop-optimal-control} reveals that  \eqref{eq-xi-transversality} can be relaxed. Indeed, equation \eqref{e:reward-bdd-1} holds as long as \eqref{eq-transversality} is true. Moreover,  by a similar argument as that for the proof of Proposition 4.2 of \cite{helm:18}, we can show that for any $(w, y)\in \cR$, the $(w,y)$-policy defined in Definition \ref{thresholds-policy} satisfies \eqref{eq-transversality}. 
For policies having infinitely-many interventions, introduce the less restrictive transversality condition as an alternative to \defref{admissible-policy}(iv)(b):\smallskip
\begin{description}
\item[\defref{admissible-policy}(iv)(c)]  if $a$ is a natural boundary, $\tau_k < \infty$ for all $k\in \mathbb N$ and the inequality \eqref{eq-transversality} holds, in which the sequence $\{\beta_n\}$ is given in \defref{admissible-policy}.
\end{description}\smallskip
Define the larger class of admissible policies $\A_G$ to be those policies satisfying \defref{admissible-policy}(i), (ii), (iii), and (iv)(a) or (iv)(c).  
Then, the $(w^{*}, y^{*})$-strategy is optimal in the class $\A_G$.  
Notice that since $G$ is defined using $F^*$, it is necessary to solve the optimization of $F$ over $(w,y) \in \R$ before one can determine the class $\A_G$.  However, defining admissibility with \defref{admissible-policy}(iv)(b) uses the exit time potential $\xi$ in the transversality condition \eqref{eq-xi-transversality}, which is determined from the fundamentals of the underlying diffusion model.  Thus, in principle, one can determine admissibility of a policy $R \in \A_{\rm Imp}$ before solving any optimization problem.
\end{rem}

\comment{
\begin{rem}\label{rem-A_p-policies} {\blue \footnote{We need to revisit this remark also.}
We make several remarks about the class $\A_{p}$ here. First, Lemma \ref{lem2-transversality} below indicates that under Conditions \ref{diff-cnd}, \ref{c-cond},  
and \ref{cond-interior-max},  for any $R\in \A_{p}$, the resulting controlled process $X$ necessarily satisfies \eqref{e2-transversality} and \eqref{e:kappa<z0}.   Second, when $a$ is an entrance boundary, $\A_{p} =\A$ thanks to Remark \ref{rem1-transversality}. Finally, we note that $\A_{p}\neq \emptyset$ for any $p > 0$ because it contains the ``$(w, y)$-policy'' of \defref{thresholds-policy}. This can be established  by a similar argument as that for the proof of Proposition 4.2 of \cite{helm:18}.
}
\end{rem}}

\subsection{Uniqueness}\label{subsect-uniqueness} 
Proposition \ref{prop-optimal-control} establishes the existence of an optimal strategy in  $\A_{\rm Imp}$  for the long-term average reward \eqref{e:reward-fn}  under Conditions \ref{diff-cnd}, \ref{c-cond}, and \ref{cond-interior-max}; moreover, the optimal strategy is of $(w,y)$-type with the levels given by the maximizing pair $(w^{*}, y^{*})$ for the function $F$.  This section shows that such a maximizing pair  is unique under the additional Condition \ref{3.9-suff-cnd}.  

{  
The next lemma establishes the unimodal behavior of the function $h$ as a consequence of \cndref{3.9-suff-cnd}.  To help visualize the result, the functions $r$ satisfying \cndref{3.9-suff-cnd} and $h$ are represented graphically in Figure~1; the symbols $\wdh x$, $\wdt x$ are as in \cndref{3.9-suff-cnd} and $\wdh y$ is defined below.
}
\begin{figure} [!ht]

\begin{subfigure}[b]{0.45\textwidth}
       \centering
		\begin{tikzpicture}
		 \begin{axis}[ legend pos= north east, axis lines = center, domain=0:6.1, xlabel=$x$,  xmin=-1.0, xmax=6, ylabel=$r(x)$,ymin=-.5,ymax=3.0, ticks=none, samples=50,]
	 \draw [blue, xshift=0cm,thick] plot [smooth] coordinates { (0,0.23) (0.3077086327, 0.7) (0.9689640690, 1.78) (1.670,2.2)  }; 
	  \draw[blue, xshift=0cm,thick] plot [smooth] coordinates {(2.35006, 2.203)  (3.2, 1.99) (4.72, 0.99) (5.99,0.081)};
	 \draw[blue,thick, xshift=0cm]  (1.6659,2.203)  --  (2.35, 2.2054);

		\addplot [blue, mark = |] coordinates {(1.7,0)}; \draw (1.7,0) node [below] {\small {$\wdh x$}};
		\addplot [blue, mark = |] coordinates {(2.35,0)}; \draw (2.35,0) node [below] {\small {$\wdt x$}};
		 \draw (0,0) node [anchor=north east] {\small {$a$}};
		  \draw (5.9,0) node [below] {\small {$b$}};
		\end{axis}		
		\end{tikzpicture} \caption{}
	\end{subfigure} 
\ \ 
  \begin{subfigure}[b]{0.45\textwidth}
       \centering
		\begin{tikzpicture}
		 \begin{axis}[ legend pos= north east, axis lines = center, domain=0:6.1, xlabel=$x$,  xmin=-1.0, xmax=6, ylabel=$h(x)$,ymin=-.5,ymax=3.0, ticks=none, samples=50,]
	 \draw [blue, xshift=0cm,thick] plot [smooth] coordinates { (0,0.12) (0.3077, 0.574) (0.96, 1.52)  (2.4,2.509)(3.1,2.43) (3.5, 2.18) (3.7,2.0)(4.82, 0.97)(5.23,.65) (5.99,0.13)};
	
		
		
		\addplot [blue, mark = |] coordinates {(2.53,0)};
     \draw (2.53,0) node [below] {\small {$\wdh y$}};
		 \draw (0,0) node [anchor=north east] {\small {$a$}};
		  \draw (5.9,0) node [below] {\small {$b$}};
		\end{axis}		
		\end{tikzpicture}\caption{}
	\end{subfigure}  
	\caption{(A) The function $r$.\qquad  \qquad \qquad  \qquad (B)  The function $h$.}\vspace{-0.1in}
		\label{fig2}
	\end{figure}

{  
Using the integral representation \eqref{e:h'-expression} of $h'$, $r$ being strictly increasing up to $\wdh x$ implies that $h$ is increasing on this interval as well.  Moreover by \propref{prop-Fmax}, there exists an optimal pair $(w^*,y^*)$ such that $h(w^*) \geq F^* = h(y^*)$ and hence there exists some $w^* < \wdt y < y^*$ for which $h'(\wdt y ) = 0$. Consequently,  we can now  define  
\begin{equation}  \label{e-y-hat-p-defn}
\wdh y_{p,\gamma} = \wdh y : = \min \{x \in \I: h'(x) = 0\}.
\end{equation} 
Note that $\wdh y > \wdt x$, where $\wdt x$ is the value specified  in \cndref{3.9-suff-cnd},  and that $\wdh y$ is the smallest extreme point of $h$ in $\I$. 
}

\begin{lem}\label{lem-h-new}
 Assume Conditions \ref{diff-cnd},    \ref{c-cond}, \ref{3.9-suff-cnd}, and \ref{cond-interior-max}  hold. Then $h$ is strictly increasing on $(a, \wdh y )$ and strictly decreasing on $(\wdh y, b)$. 
\ Consequently the optimizing pair $(w^{*}, y^{*}) \in \cR$ of Proposition \ref{prop-Fmax} is unique. 
\end{lem}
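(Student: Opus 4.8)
The plan is to reduce both assertions to the sign pattern of $h'$, using the explicit formula
$$h'(x) = \frac{m(x)}{M^{2}[a,x]}\int_{a}^{x}\bigl[r(x)-r(u)\bigr]\,dM(u)$$
from \eqref{e:h'-expression} together with the shape of $r$ already pinned down in Lemma~\ref{lem-for-cond-new}. Since $m>0$ and $M[a,x]>0$ on $\I$, the sign of $h'(x)$ equals that of the auxiliary function $\phi(x):=\int_{a}^{x}[r(x)-r(u)]\,dM(u)$; and by the very definition of $\wdh y$ in \eqref{e-y-hat-p-defn} one has $\phi(\wdh y)=0$ with $\wdh y\in(\wdt x,b)$.

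First I would show $h'>0$ on $(a,\wdt x\,]$. By Lemma~\ref{lem-for-cond-new}, $r$ is nondecreasing on $(a,\wdt x)$ and strictly increasing on $(a,\wdh x)$, so for $x\in(a,\wdt x\,]$ one has $r(u)\le r(x)$ for every $u\in(a,x)$, with strict inequality on $(a,\min\{x,\wdh x\})$, a set of positive $M$-measure; hence $\phi(x)>0$ and $h'(x)>0$. Next I would prove that $\phi$ is strictly decreasing on $(\wdt x,b)$: for $\wdt x<x_{1}<x_{2}<b$, writing $\phi(x)=r(x)M[a,x]-\int_{a}^{x}r\,dM$ and regrouping gives
$$\phi(x_{2})-\phi(x_{1})=\bigl(r(x_{2})-r(x_{1})\bigr)M[a,x_{1}]+\int_{x_{1}}^{x_{2}}\bigl(r(x_{2})-r(u)\bigr)\,dM(u),$$
and both terms are strictly negative because $r$ is strictly decreasing on $(\wdt x,b)$ (Lemma~\ref{lem-for-cond-new}) and $M$ assigns positive mass to every subinterval of $\I$. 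Combining $\phi(\wdh y)=0$ with the strict monotonicity of $\phi$ on $(\wdt x,b)$ yields $\phi>0$ on $(\wdt x,\wdh y)$ and $\phi<0$ on $(\wdh y,b)$; together with $h'>0$ on $(a,\wdt x\,]$ this gives $h'>0$ on $(a,\wdh y)$ and $h'<0$ on $(\wdh y,b)$. Hence $h$ is strictly increasing on $(a,\wdh y\,]$ and strictly decreasing on $[\wdh y,b)$; in particular $\wdh y$ is the unique global maximizer of $h$ on $\I$ and $\sup_{x\in\I}h(x)=h(\wdh y)$.

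For uniqueness of the optimizing pair, let $(w^{*},y^{*})$ be any optimizer produced by Proposition~\ref{prop-Fmax} and set $F^{*}=F(w^{*},y^{*})$. I would first trap $F^{*}$ strictly between the two relevant values of $h$: on one hand $F^{*}>\gamma\bar c(b)$ by Condition~\ref{cond-interior-max}; on the other, applying \eqref{e:F<ell} to the optimizing pair gives $F^{*}<h(\theta)\le h(\wdh y)=\sup_{x\in\I}h(x)$ for some $\theta\in(w^{*},y^{*})$, and $\lim_{y\to b}h(y)=\gamma\bar c(b)$ by Lemma~\ref{lem-ell-limit at b}. Using the rearranged first-order conditions \eqref{e-1st-order-condition}--\eqref{e2-1st-order-condition} (i.e. $h(w^{*})=F^{*}=h(y^{*})$ when $w^{*}>a$, and $h(a)\ge F^{*}=h(y^{*})$ when $a$ is an entrance point and $w^{*}=a$) together with the shape of $h$ just established, one checks that necessarily $a\le w^{*}<\wdh y<y^{*}<b$: $y^{*}\le\wdh y$ would contradict strict monotonicity of $h$ on $(a,\wdh y\,]$ (compare $h(w^{*})$, or $h(a)$, with $h(y^{*})$), $y^{*}=\wdh y$ would force $F^{*}=h(\wdh y)=\sup_{x\in\I}h(x)$, and $w^{*}\ge\wdh y$ would contradict strict monotonicity of $h$ on $[\wdh y,b)$. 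Consequently $y^{*}$ must be the unique point of $(\wdh y,b)$ solving $h(y)=F^{*}$, so all optimizers share the same $y^{*}$. For $w^{*}$: when $a$ is natural, $w^{*}$ is the unique point of $(a,\wdh y)$ solving $h(w)=F^{*}$; when $a$ is an entrance point, a given optimizer has either $w^{*}=a$ or $w^{*}\in(a,\wdh y)$ with $h(w^{*})=F^{*}$, and these two possibilities cannot coexist among optimizers, since $h$ being strictly increasing on $(a,\wdh y)$ with left-hand value $h(a)$ makes $h(a)\ge F^{*}$ incompatible with the existence of $w\in(a,\wdh y)$ with $h(w)=F^{*}$. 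Thus $w^{*}$ is also uniquely determined, and $(w^{*},y^{*})$ is unique.

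The shape of $h$ is the cheap part, since Lemma~\ref{lem-for-cond-new} does the real work; the delicate step is the uniqueness bookkeeping near an entrance boundary, where one must exclude the coexistence of an optimizer with $w^{*}=a$ and one with $w^{*}>a$, and must secure the \emph{strict} inequality $F^{*}<\sup_{x\in\I}h(x)$ — for which it is essential that the supremum $F^{*}$ is actually attained (Proposition~\ref{prop-Fmax}) so that \eqref{e:F<ell} can be invoked precisely at the optimizing pair.
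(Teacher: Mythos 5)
Your proof is correct and follows essentially the same route as the paper: the sign of $h'$ is read off from \eqref{e:h'-expression} together with the shape of $r$ from Lemma~\ref{lem-for-cond-new} (your observation that $\phi(x)=\int_a^x[r(x)-r(u)]\,dM(u)$ is strictly decreasing on $(\wdt x,b)$ is just a repackaging of the paper's direct comparison of $h'(x)$ with $h'(\wdh y)=0$). The uniqueness bookkeeping, including the entrance-boundary case $w^*=a$, is left implicit in the paper's proof, and your explicit treatment of it is accurate.
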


\begin{proof}  We observed above that $h$ is strictly increasing on $(a, \wdh y)$. It remains to show that $h$ is strictly decreasing on $(\wdh y, b)$. To this end, we use \eqref{e:h'-expression} to compute for $x > \wdh y$: 
\begin{align*} 
h'(x)   & = \frac{m(x)}{M[a,x]^{2}} \int_{a}^{x} [r(x) - r(u)]\, dM(u) \\
& = \frac{m(x)}{M[a,x]^{2}} \left[ \int_{a}^{\wdh y} [r(\wdh y) - r(u) +r(x) - r(\wdh y) ]\, dM(u)  + \int_{\wdh y}^{x} [r(x) - r(u)]\, dM(u)\right] \\
& <  \frac{m(x)}{M[a,x]^{2}}  \int_{a}^{\wdh y} [r(\wdh y) - r(u)]\, dM(u)  
  = h'(\wdh y) =0,
\end{align*} 
where the inequality above follows from the facts that  $x > \wdh y >\wdt x $ and that $r$ is strictly   decreasing on $(\wdt x, b)$. The proof is complete. 
\end{proof}

The following proposition follows directly from Propositions \ref{prop-Fmax}  and \ref{prop-optimal-control} and Lemma  \ref{lem-h-new}.  

\begin{prop}\label{prop-sec3-uni+existence} 
Assume Conditions \ref{diff-cnd},  \ref{c-cond}, \ref{3.9-suff-cnd},  and \ref{cond-interior-max} hold.    
Then there exists a unique maximizing pair $(w^{*}, y^{*}) \in \cR$  for the function $F$ of \eqref{e:F_K}. In addition,   the $(w^{*}, y^{*})$-policy  is an optimal admissible policy. 
\end{prop}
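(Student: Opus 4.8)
\emph{Plan.}\/ The proposition bundles three facts already in hand: existence and the first-order characterization of a maximizer of $F$ (\propref{prop-Fmax}); the strict unimodality of $h$ (\lemref{lem-h-new}); and the optimality over $\A_{\rm Imp}$ of any $(w,y)$-policy whose levels realize $\sup_{\cR}F$ (\propref{prop-optimal-control}). The work is therefore mostly assembly, the one point meriting extra lines being the deduction of \emph{uniqueness} of the maximizing pair from the unimodality of $h$.

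I would first fix an optimizing pair $(w^{*},y^{*})\in\cR$, which exists by \propref{prop-Fmax}, and record from that proposition that $h(y^{*})=F^{*}$ always, while $h(w^{*})=F^{*}$ as well unless $a$ is an entrance point with $w^{*}=a$ (in which case only $h(a)\ge F^{*}$ is claimed). By \lemref{lem-h-new}, $h$ has a unique maximizer $\wdh y$, with $h$ strictly increasing on $(a,\wdh y)$ and strictly decreasing on $(\wdh y,b)$; by continuity $\sup_{\I}h=h(\wdh y)$, and applying \eqref{e:F<ell} to the attained maximizer gives $F^{*}<h(\wdh y)$, so $w^{*}\ne\wdh y$ and $y^{*}\ne\wdh y$. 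Next I would show $w^{*}$ and $y^{*}$ straddle $\wdh y$: they cannot both lie in $(a,\wdh y)$, where strict increase of $h$ would force $h(w^{*})<h(y^{*})=F^{*}$ against $h(w^{*})\ge F^{*}$; and they cannot both lie in $(\wdh y,b)$, since then $w^{*}>a$, so the first-order condition forces $h(w^{*})=F^{*}$, yet strict decrease of $h$ there gives $h(w^{*})>h(y^{*})=F^{*}$. Hence $a\le w^{*}<\wdh y<y^{*}<b$.

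I would then pin down each coordinate. Because $h$ is continuous, strictly decreasing on $(\wdh y,b)$, with $\lim_{x\to b}h(x)=\gamma\bar c(b)$ (\lemref{lem-ell-limit at b}) and $F^{*}>\gamma\bar c(b)$ (since $F^{*}\ge F(\wdt w,\wdt y)>\gamma\bar c(b)$ by \cndref{cond-interior-max}), the equation $h(y)=F^{*}$ has a unique solution on $(\wdh y,b)$, namely $y^{*}$; thus $y^{*}$ is unique. For the reset level, strict increase of $h$ on $(a,\wdh y)$ shows $h(x)=F^{*}$ has at most one solution $x_{1}$ on $(a,\wdh y)$. When $a$ is natural, \propref{prop-Fmax}(i) gives $w^{*}\in(a,\wdh y)$ with $h(w^{*})=F^{*}$, so $w^{*}=x_{1}$. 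When $a$ is an entrance point, I still have to exclude $w^{*}=a$ whenever $x_{1}$ exists; this I would do with the auxiliary function
$$\phi(w):=F^{*}\big(\xi(y^{*})-\xi(w)\big)-\big(p(y^{*}-w)-K+\gamma(g(y^{*})-g(w))\big),$$
which satisfies $\phi\ge0$ on $\{w: w<y^{*}\}$ (this is just $F(\cdot,y^{*})\le F^{*}$), $\phi(w^{*})=0$, and $\phi'(w)=\xi'(w)\big(h(w)-F^{*}\big)<0$ for $w<x_{1}$; hence $\phi(a)>\phi(x_{1})\ge0$, so $w^{*}=a$ would contradict $\phi(w^{*})=0$, forcing $w^{*}=x_{1}$. (If no such $x_{1}$ exists, then $h<F^{*}$ on $(a,\wdh y)$, the first-order condition leaves $w^{*}=a$ as the only option, and uniqueness is trivial.) In every case $(w^{*},y^{*})$ is uniquely determined. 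Finally, \propref{prop-optimal-control} applied to this unique pair gives that the $(w^{*},y^{*})$-policy is an optimal admissible policy, completing the proof.

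The main obstacle is the straddling step together with the exclusion of the degenerate possibility $w^{*}=a$; both rest on the \emph{strict} unimodality of \lemref{lem-h-new}, hence ultimately on \cndref{3.9-suff-cnd}. Everything else — existence of a maximizer, the first-order equalities, the supply-rate bound, and the optimality comparison against arbitrary admissible policies — is quoted directly from the results established earlier in the paper.
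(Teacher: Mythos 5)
Your proposal is correct and follows the same route as the paper, which disposes of this proposition in one line by citing Propositions \ref{prop-Fmax} and \ref{prop-optimal-control} together with Lemma \ref{lem-h-new} (the uniqueness being asserted, without further detail, as the final sentence of that lemma); your contribution is to actually spell out the straddling argument and the exclusion of a second maximizer with $w^*=a$, which the paper leaves implicit. One tiny slip in your parenthetical degenerate case: if no $x_1\in(a,\wdh y)$ solves $h(x)=F^*$, then since $h$ is continuous, strictly increasing there, and $h(\wdh y)>F^*$, necessarily $h>F^*$ (not $h<F^*$) on $(a,\wdh y)$; the conclusion that $w^*=a$ is forced is unaffected.
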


\comment{ \begin{proof}
Under Conditions \ref{diff-cnd} and \ref{cond-interior-max}, Proposition \ref{prop-Fmax} says that there exists a maximizing pair $(w^{*}_{p}, y^{*}_{p}) \in \cR$  for the function $F_{p}$ for which \eqref{e-1st-order-condition} holds. This together with \eqref{e:h'-expression} and the assumption that $r_{p} $  is  strictly increasing  on $(a, \wdh x_{p})$ implies that there exists an $ x > \wdh x_{p}$ so that $h_{p}'(x) =0$. Furthermore, we can define $y_{p} : = \min\{x \in \I: h_{p}'(x) =0\}$ as in \eqref{e-y-hat-p-defn}; note that $y_{p}> \wdh x_{p}$. Since  $r_{p} $ is decreasing on  $(\wdh x_{p}, b)$ by assumption, it is decreasing on $(y_{p}, b)$. In other words, Condition \ref{new-cond-Kurt} holds. Therefore, it follows from Lemma \ref{lem-h-new} that the maximizing pair $(w^{*}_{p}, y^{*}_{p})  $  for  $F_{p}$ is unique and   the assertions concerning $h_{p}$ of Lemma \ref{lem-h-new} hold. The optimality of  the $(w^{*}_{p}, y^{*}_{p})$-threshold strategy  follows from Proposition \ref{prop-optimal-control}. 
\end{proof}} 

\subsection{Probabilistic Cell Problem and Overtaking Optimality}\label{sect-stoch-cell}

 We briefly digress from the long-term average problem \eqref{e:reward-fn} to consider other approaches to infinite-horizon impulse control. For notational simplicity, for any $R\in \A_{\rm Imp}$, we define   the total expected reward up to time $T$
\begin{align*}
  J_T(x; R) : = \EE_x \left[\int_0^T \gamma c(X(s))\, ds+ \sum_{k=1}^{\infty} I_{\{\tau_k\le T\}} (p(X(\tau_k-) - X(\tau_k)) -K)  \right], 
\end{align*} where $x\in \I$ and $T> 0$. Often one wishes to maximizes the performance functional over a large time horizon; i.e., to maximize $\liminf_{T\to\infty} J_T(x;R)$ over all admissible policies $R$. However, as the running reward function $c$ is nonnegative, such a problem may be ill-posed in the sense that $\liminf_{T\to\infty} J_T(x;R) = \infty$ for many $R\in \A_{\rm Imp}$.  To address this issue, we consider a modified performance criterion in which a linear function of time is subtracted from $J_T(x;R)$. More specifically,
we consider the  infinite-horizon optimization problem of finding an income rate $\lambda$ and a finite-valued function $V$ which satisfies 
\begin{equation}  \label{e-value-cell-pr} 
V(x) := \sup_{R\in \A_{\mathbb T}}\liminf_{T\to\infty} [J_T(x;R) -\lambda T],  \qquad \forall x \in \I, 
\end{equation} 
where $\A_{\mathbb T}$ is the class of all $(w,y)$-policies introduced in Definition \ref{thresholds-policy}; a mnemonic for $\mathbb T$ is that these are thresholds policies. 
   With $\lambda T$ serving as a benchmark,  we can think of \eqref{e-value-cell-pr} as a  maximization problem for the long-term accumulated relative reward under impulse control.  Further, we seek an optimal policy $R \in \A_{\mathbb T}$ which achieves the maximal values for every $x \in \I$.

Such a problem is motivated by similar problems arising in Statistics and the so-called probabilistic cell problem studied in the context of the LQG problem; see, for example, \cite{BayrJ:25,JianJSY:24}. Similar to the LQG setting, in which the probabilistic cell problem is related to the ergodic Bellman equation, we demonstrate that for the impulse control model studied in this paper, the solution to \eqref{e-value-cell-pr} is closely connected to the function $G$ defined in \eqref{e:G_p}.  In fact, recall that $(G, F^*)$ solves the QVI \eqref{e:qvi} and that $F^*$  is the optimal value for   the long-term average   problem \eqref{e:reward-fn}. 
Proposition \ref{prop:overtaking} shows that if $a$ is an entrance boundary, then the value function $V$ of \eqref{e-value-cell-pr} is given by the impulse reward potential $G$, adjusted by its mean under the invariant measure resulting from the optimal $(w^*,y^*)$-policy; further adjustment is needed when the initial state $x$ is above the threshold $y^*$.

The rate $\lambda$ in \eqref{e-value-cell-pr} cannot be chosen arbitrarily.  To see this, assume the conditions of \propref{prop-sec3-uni+existence} hold, let $F^*=F(w^*,y^*)$ be the resulting optimal value and $R^* = R^{(w^*,y^*)}$ be the optimal policy.  If $\lambda < F^*$, then   
\begin{align*}
    \liminf_{T\to\infty}  [J_T (x, R^*) - \lambda T]
    = \infty, \quad \forall x\in\I.
  \end{align*} 
  On the other hand, if $\lambda > F^*$, then for any admissible policy $R\in \A_{\rm Imp}$, we have 
  \begin{align*}
  \liminf_{T\to\infty} 
  [J_T (x, R) - \lambda T]= -\infty. 
  \end{align*} 
Therefore, the only reasonable choice is $\lambda = F^*$.

Assume the conditions of \propref{prop-sec3-uni+existence}, let $(w^*,y^*)$ be the unique optimizer of the function $F$ of \eqref{e:F_K} and, as before, denote $F(w^*,y^*) = F^*$.  We now define piecewisely the function $\wdt G$ as follows:
\begin{align}\label{e:u_def}
    \wdt G(x) = \begin{cases}  
   F^* \xi(x) - \gamma  g(x), & x < y^*, \\  
p(x -w^*) - K +   G(w^*), & x \ge y^*.
\end{cases}
  \end{align}  
Notice that $\wdt G(x) = G(x)$ for $x \le  y^*$, where  the function $G$ is defined in \eqref{e:G_p}, but then $\wdt G$ grows linearly for $x \geq\ y^*$. Hence we can regard $\wdt G$ as a modified impulse reward potential. 

{    The next proposition imposes the restriction that the left boundary $a$ be an entrance boundary so that the function $G$ is bounded below.  This condition is used when takng limits in the paragraph before \eqref{e2:cell-problem}.  The proof does not hold in general when $G$ is unbounded below in a neighborhood of $a$.
}

 \begin{prop} \label{prop:overtaking}
  Assume  Conditions \ref{diff-cnd}, \ref{c-cond}, \ref{3.9-suff-cnd}, and  \ref{cond-interior-max}  hold and that $a$ is an entrance boundary.  Let $\wdt G$ be defined by \eqref{e:u_def} with $(w^*,y^*)$ being the unique optimizer of $F$.  Let $R^*$ denote the optimal $(w^*,y^*)$-policy and $\nu^{R^*}$ be the invariant measure induced by $R^*$.  Then the solution to problem \eqref{e-value-cell-pr} is the triplet  
$$ (F^*, \wdt G(\cdot) - \lan \wdt G, \nu^{R^*}\ran, R^*).$$ 
Therefore  the value function $V=\wdt G - \lan \wdt G, \nu^{R^*}\ran$ is the deviation of the modified impulse reward potential from its mean under $\nu^{R^*}$.
\end{prop}  

\begin{proof} We have observed 
in  Proposition \ref{prop-optimal-control} that the   policy  $R^*$ is optimal for   \eqref{e:reward-fn} with optimal payoff $F^*  $. We now show that the value function $V(\cdot)$ of \eqref{e-value-cell-pr} is given by $\wdt G(\cdot) - \lan \wdt G, \nu^{R^*}\ran$ and that $R^*$ is also optimal for \eqref{e-value-cell-pr}.  In view of  \eqref{e:F_K} and Proposition \ref{prop-sec3-uni+existence}, for any $R\in \A_{\mathbb T}$ with $R^{(w, y)}\neq R^*$, we have  $J(R^{(w, y)}) = F(w, y) < F^*$.
 Therefore it follows that 
 \begin{align}\label{e1:cell-problem}
    \liminf_{T\to\infty} 
    \big[J_T(x, R^{(w, y)})- F^* T\big] = -\infty. 
  \end{align} 
On the other hand, for the $R^*$-policy, denoting by $X$ the controlled process, the same calculations as those in deriving in \eqref{eq25} give  
  \begin{align}\label{e0:jump-G}   \nonumber   
  G(x)  &  +  F^{*}T  - \liminf_{n\to\infty}   \EE_{x}[ G(X(T\wedge \beta_{n}))]  \\ &
    =   \EE_{x} \left[\int_{0}^{T} \gamma c(X(s))ds + \sum_{k=1}^{\infty} I_{\{\tau_{k}\le T \}} [ G(X(\tau_{k}-)) -  G( X(\tau_{k}))] \right].
  \end{align}  
  We proceed in two cases. 
  
 \begin{itemize} \item[(i)] If $x < y^*$, then under the policy $R^*$, $X(\tau_{k}-) =y^*$ and $X(\tau_{k}) = w^*$ for all $k\in \mathbb N$. In addition, the last equaltion of \eqref{e:AG_BG} implies that 
\begin{align}\label{e1:jump-G}\nonumber 
   G(X(\tau_{k}-)) -  G( X(\tau_{k})) & =  G(y^*) -  G(w^*) = p(y^* - w^*) - K\\ &= p(X(\tau_k-) - X(\tau_k)) -K, \quad  k \in \NN.
  \end{align} 
Plugging \eqref{e1:jump-G} into \eqref{e0:jump-G} yields
\begin{align*} 
 G(x) & + F^* T -\lim_{n\to\infty} \EE_x[ G(X(T\wedge \beta_n))] \\ & =  \EE_{x} \left[\int_{0}^{T } \gamma c(X(s))ds + \sum_{k=1}^{\infty} I_{\{\tau_{k}\le T \}} [p(X(\tau_k-) - X(\tau_k)) -K] \right].\end{align*} 
  Rearrange the above displayed equation to obtain
  \begin{align}\label{e:J_T=G}
    J_T(x; R^*)- F^* T  & =    G(x) -\lim_{n\to\infty} \EE_x[  G(X(T\wedge \beta_n))] = \wdt G(x) -\lim_{n\to\infty} \EE_x[   G(X(T\wedge \beta_n))],  
\end{align} where the last equality holds because $\wdt G (x) = G(x)$ for all $x\le  y^*$. 
  
  \item[(ii)] If $x \ge y^*$, then $X$ jumps to $w^*$ at time $\tau_1 = 0$ and subsequently behaves as if it started from $w^*$. Therefore, 
\begin{align}\label{e2:jump-G} 
 \nonumber G(X&(\tau_{1}-)) -   G( X(\tau_{1}))  =   G(x) -   G(w^*) \\ \nonumber & = p(x - w^*) - K +  G(x) -  G(w^*)-[p(x - w^*) - K] \\    & 
= p(X(\tau_1-) - X(\tau_1)) -K +   G(x) -   G(w^*)-[p(x - w^*) - K];    
\end{align}
\eqref{e1:jump-G} still holds for all $k \ge 2$.  Using \eqref{e2:jump-G} and \eqref{e1:jump-G} in \eqref{e0:jump-G}, we derive 
\begin{align*}
    G&(x)     +  F^{*}T  - \liminf_{n\to\infty}   \EE_{x}[ G(X(T\wedge \beta_{n}))] \\ &  =  \EE_{x} \left[\int_{0}^{T } \gamma c(X(s))ds + \sum_{k=1}^{\infty} I_{\{\tau_{k}\le T \}} [p(X(\tau_k-) - X(\tau_k)) -K] \right] \\ & \quad +  G(x) -   G(w^*)-[p(x - w^*) - K]. 
\end{align*} Using the definition of $\wdt G$ given in \eqref{e:u_def}, a rearrangement of the above displayed equation  leads to 
  \begin{align*}
    J_T(x; R^*)- F^* T  & =  G(w^*) + [p(x - w^*) - K]   -\lim_{n\to\infty} \EE_x[ G(X(T\wedge \beta_n))]\\ &  = \wdt G(x) -\lim_{n\to\infty} \EE_x[   G(X(T\wedge \beta_n))]; 
  \end{align*} again establishing \eqref{e:J_T=G}
\end{itemize}

  Since $a$ is an entrance boundary, as observed in the proof of Lemma \ref{lem-Gp-limit}, the function $G$ is uniformly bounded from below.   On the other hand,  since  the process $X$ is bounded above by $x_0\vee y^*$, the random variables $  G(X(T\wedge \beta_n)), n \in \mathbb N $ are uniformly bounded. Hence  $\lim_{n\to\infty} \EE_x[ G(X(T\wedge \beta_n))] = \EE_x[ G(X(T))]$. Furthermore, since $X$ possesses the invariant measure $\nu^{R^*}$, we have $\lim_{T\to\infty} \EE_x[ G(X(T))] = \lan  G, \nu^{R^*}\ran$.  Since $\nu^{R^*}$ has support on $[a, y^*]$ and $G$ and $\wdt G$ coincide on this     interval,  we have $\lan  G, \nu^{R^*}\ran = \lan  \wdt G, \nu^{R^*}\ran$. Therefore taking the limit as $T \to \infty$ in \eqref{e:J_T=G}  yields 
  \begin{align}\label{e2:cell-problem} 
  \liminf_{T\to\infty}  [J_T(x; R^*)- F^* T ] = \wdt G(x) - \lan   G, \nu^{R^*}\ran= \wdt G(x) - \lan \wdt G, \nu^{R^*}\ran.
  \end{align} 
  A combination of \eqref{e1:cell-problem} and \eqref{e2:cell-problem} yields $V(\cdot)= \wdt G(\cdot) - \lan \wdt G, \nu^{R^*}\ran$ and that $R^*$ is an optimal policy for \eqref{e-value-cell-pr}. This completes the proof.
\end{proof} 

\begin{rem}
  As a side remark, we note that the function $\wdt G$ defined in \eqref{e:u_def} satisfies $\wdt G \in C^1(\I) \cap C^2(\I\backslash \{y^*\})$.  Moreover, $\wdt G$ and $F^*$ satisfy the system \eqref{e:AG_BG} and the QVI \eqref{e:qvi}.   In fact, in view of the  smooth pasting method for ergodic singular and impulse control problems, c.f. \cite{HelmSZ-17,JackZ-06,KunwXYZ-22,LiangLZ-25}, we may regard $(\wdt G, F^*)$ as the {\em usual} solution to \eqref{e:qvi}.
\end{rem}

\begin{rem}
  Now consider problem \eqref{e-value-cell-pr} over a  larger class   of   impulse control policies, namely, $\A_{\mathrm S}$, the class of all admissible policies $R$ for which the controlled process $X^R$ has a stationary distribution, then  it is not clear whether the value function $V$ and an optimal policy $R$ remain the same as those over $\A_{\mathbb T}$.  

  Suppose $R\in \A_{\mathrm S}$ is arbitrary and denote by $\nu^R$ the invariant measure induced by $R$. If the long-term average reward of $R$ is less than $F^*$, then using similar calculations as those preceding \propref{prop:overtaking}, we have 
   $ \liminf_{T\to\infty} 
     [J_T(x;R) - F^* T]= -\infty.
  $ 
  On the other hand, if the long-term average reward of $R$ equals $F^*$, then for any $n\in \mathbb N$  and $T> 0$, with $\beta_n$ similarly defined  as in Definition \ref{admissible-policy}, using the same calculations as those in deriving \eqref{e2:cell-problem}, we have   
  \begin{align*}
    \limsup_{T\to\infty}[J_T(x; R) - F^* T]
    &  \le \wdt G(x) - \liminf_{T\to\infty} \lim_{n\to\infty} \EE_x[\wdt G(X(T\wedge \beta_n))] \\ 
    &  \le \wdt G(x) - \liminf_{T\to\infty}  \EE_x[\wdt G(X(T))] \\
    & \le   \wdt G(x) - \lan \wdt G, \nu^R\ran,
  \end{align*} 
where the second inequality follows from Fatou's lemma and the assumption that $a$ is an entrance boundary,  which ensures that $\wdt G$ is bounded from below, and which together with the continuity of $\wdt G$ and the Portmanteau theorem imply the last inequality.  Therefore it follows that 
\begin{align}\label{e:cell-upper-bd}
  V(x) \le \wdt G(x) - \inf_{R\in \A_{\mathrm S}: J(R) = F^*} \lan \wdt G, \nu^R\ran.
\end{align}  
Since it is not clear whether the above inequality is in fact an equality,  we leave this issue as an interesting topic for future research.
\end{rem}

  We now turn to another perspective on impulse control over an infinite horizon: overtaking optimality. This important concept in infinite-horizon optimization was first introduced in \cite{Ramsey-28} and has applications in economics,   operation research, and control theory. 
  Weaker formulations were subsequently developed in \cite{BrockH:76, Weiz-65, Gale-67} and many other works; see also \cite{DockJLS-00, CarlsonHL:91} for extensive surveys and comprehensive lists of references on overtaking optimality. Following the terminology in \cite{HuangYZ-21}, we say that an admissible policy $\wdh R\in \A_{\rm Imp}$ is overtaking optimal starting from $x\in \I$ if for each $R \in \A_{\rm Imp}$,  we have 
\begin{align*}
  \limsup_{T\to\infty} [J_T(x; R) - J_T(x, \wdh R)] \le 0.
\end{align*} 
\begin{cor}\label{cor-overtaking}
  Assume  Conditions \ref{diff-cnd}, \ref{c-cond}, \ref{3.9-suff-cnd}, and  \ref{cond-interior-max}  hold. Then the $R^* = R^{(w^*,y^*)}$-policy is overtaking optimal in $\A_{\mathbb T}$ starting from each $x\in \I$.
\end{cor}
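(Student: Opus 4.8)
The plan is to derive the corollary as an almost immediate consequence of the two asymptotic identities established in the proof of \propref{prop:overtaking}, namely \eqref{e1:cell-problem} and \eqref{e2:cell-problem}, taking $\lambda = F^*$. First I would fix an arbitrary starting point $x \in \I$ and an arbitrary competing policy $R \in \A_{\mathbb T}$, so that $R = R^{(w,y)}$ for some $(w,y) \in \cR$, and split into two cases according to whether $R = R^*$ or $R \ne R^*$. In the trivial case $R = R^*$ the difference $J_T(x;R) - J_T(x;\wdh R)$ is identically zero, so the $\limsup$ is zero.

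In the nontrivial case $R^{(w,y)} \ne R^*$, \propref{prop-sec3-uni+existence} and the uniqueness of the maximizing pair give $J(R^{(w,y)}) = F(w,y) < F^*$, and the argument preceding \propref{prop:overtaking} yields $\liminf_{T\to\infty}[J_T(x;R^{(w,y)}) - F^* T] = -\infty$, which in particular forces $J_T(x;R^{(w,y)}) - F^* T \to -\infty$ along a subsequence, and in fact (since the quantity is eventually decreasing in the relevant sense used there) we get $\limsup_{T\to\infty}[J_T(x;R^{(w,y)}) - F^* T] = -\infty$ as well. Meanwhile, \eqref{e2:cell-problem} in the proof of \propref{prop:overtaking} shows $\liminf_{T\to\infty}[J_T(x;R^*) - F^* T] = G(x) - \lan G, \nu^{R^*}\ran$, a finite number. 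Writing
\begin{align*}
J_T(x;R) - J_T(x;R^*) = \bigl(J_T(x;R) - F^* T\bigr) - \bigl(J_T(x;R^*) - F^* T\bigr),
\end{align*}
and using the elementary inequality $\limsup(a_T - b_T) \le \limsup a_T - \liminf b_T$, we obtain
\begin{align*}
\limsup_{T\to\infty}\bigl[J_T(x;R) - J_T(x;R^*)\bigr] \le \Bigl(\limsup_{T\to\infty}[J_T(x;R) - F^* T]\Bigr) - \Bigl(\liminf_{T\to\infty}[J_T(x;R^*) - F^* T]\Bigr) = -\infty \le 0.
\end{align*}

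The only point requiring a little care — and the main obstacle — is confirming that $\limsup_{T\to\infty}[J_T(x;R^{(w,y)}) - F^* T] = -\infty$, not merely $\liminf = -\infty$; this needs the observation that $t^{-1}\bigl(J_T(x;R^{(w,y)}) - F^*T\bigr)$ has limit $J(R^{(w,y)}) - F^* < 0$ (using the renewal representation of the long-term average reward for $(w,y)$-policies from \sectref{sect-preliminaries}), so $J_T - F^*T$ is eventually dominated by a linear function with strictly negative slope and hence tends to $-\infty$. Note also that, unlike \propref{prop:overtaking}, no hypothesis that $a$ is an entrance boundary is needed here, because we never invoke \eqref{e2:cell-problem}'s finiteness for the \emph{competing} policy — only for $R^*$, whose controlled process is bounded above by $x_0 \vee y^*$ regardless of the boundary type, so $G(X(T\wedge\beta_n))$ remains bounded and the argument of \propref{prop:overtaking} for $R^*$ goes through verbatim.
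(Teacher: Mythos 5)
Your overall strategy is sound and the half of it concerning the competing policy is exactly what the paper does: from $T^{-1}J_T(x;R^{(w,y)})\to F(w,y)<F^*$ you correctly conclude $J_T(x;R^{(w,y)})-F^*T\le -\delta T\to-\infty$. But you route the control of the \emph{optimal} policy through the cell-problem identity \eqref{e2:cell-problem}, and your justification for extending that identity beyond the entrance-boundary case is wrong. You claim that because $X^{R^*}$ is bounded above by $x_0\vee y^*$, the random variables $G(X(T\wedge\beta_n))$ ``remain bounded'' regardless of boundary type. When $a$ is a natural boundary this is false: $X^{R^*}$ is not bounded away from $a$, and $G(x)\to-\infty$ as $x\to a$ (see \remref{rem1-transversality}), so $G(X(T\wedge\beta_n))$ is bounded above but not below, and the ``verbatim'' argument of \propref{prop:overtaking} (which needs two-sided uniform boundedness to pass from $\lim_n\EE_x[G(X(T\wedge\beta_n))]$ to $\EE_x[G(X(T))]$ and then to $\lan G,\nu^{R^*}\ran$) does not go through. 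What you actually need is only the one-sided bound $\liminf_{T\to\infty}[J_T(x;R^*)-F^*T]>-\infty$, which \emph{can} be salvaged — since $G$ is increasing and $X^{R^*}\le x_0\vee y^*$, one has $\EE_x[G(X(T\wedge\beta_n))]\le G(x_0\vee y^*)$, hence $J_T(x;R^*)-F^*T\ge G(x)-G(x_0\vee y^*)$ — but that is a different argument from the one you gave, and as written the step is a genuine gap.

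It is worth noting that the paper's proof avoids this machinery entirely and is strictly more elementary: it uses only the renewal limits $T^{-1}J_T(x;R^{(w,y)})\to F(w,y)$ and $T^{-1}J_T(x;R^*)\to F^*$, picks $0<\e<\tfrac{F^*-F(w,y)}{2}$, and observes that for $T$ large
\begin{equation*}
J_T(x;R^{(w,y)})<(F(w,y)+\e)T<(F^*-\e)T<J_T(x;R^*),
\end{equation*}
which gives overtaking optimality directly, with no need to show that $J_T(x;R^*)-F^*T$ stays bounded below and hence no dependence on the boundary classification of $a$. If you repair your argument as indicated above it is valid, but the detour through $G$ buys you nothing for this corollary.
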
 \begin{proof}
  This follows directly from \eqref{e:F_K} and Proposition \ref{prop-sec3-uni+existence}. Indeed, for any $(w, y)\neq (w^*, y^*)$, denoting the corresponding policies by $R = R^{(w, y)}$ and $R^* = R^{(w^*, y^*)}$, respectively, we have
  \begin{align*}
    \lim_{T\to\infty} \frac1T J_T(x; R) = F(w, y) < F^* = \lim_{T\to\infty} \frac1T J_T(x; R^*), \quad\forall x\in \I.
  \end{align*} This implies that for any $0 < \e  < \frac{F^* - F(w, y)}{2} $, there exists some $T_0 > 0$ so that for all $T \ge T_0$, we have \begin{align*}
    J_T(x; R) < (F(w, y) + \e) T < (F^* - \e) T < J_T(x; R^*).  \tag*{\qedhere} 
  \end{align*}
\end{proof}
\comment{ \begin{rem}
The optimal $(w^*,y^*)$ policy for \eqref{e:reward-fn} is {\em overtaking optimal} in the following sense. 
  For $x\in \I$, define the {\em non-time-averaged}\/ expected payoff up to time $T$ for a $(w,y)$ policy in $\A_{\mathbb T}$ by 
$$\wdt J(x,T;w,y) = \EE_x \bigg[\int_0^T c(X(s)) ds+ \sum_{k=1}^{\infty} I_{\{\tau_k\le T\}} (p(X(\tau_k-) - X(\tau_k)) -K)\bigg].$$
Then for each $(w,y) \in \A_{\mathbb T}$ with $(w,y) \neq (w^*,y^*)$, for each $x \in \I$ and for $T$ sufficiently large, 
$$\wdt J(x,T;w,y) \leq \wdt J(x,T;w^*,y^*);$$
that is, the payoff for the $(w^*,y^*)$ policy eventually overtakes the payoff of every other $(w,y)$ policy.  

To see this, arbitrarily pick $(w_1,y_1) \in \R$ with $(w_1,y_1) \neq (w^*,y^*)$ and denote the controlled process by $X_1$, its value by $F(w_1,y_1) = F_1$ and the interventions by $\{(\tau^{(1)}_k,Y^{(1)}_k): k \in \NN\}$.  Similarly denote $X^*$ as the process arising from the $(w^*,y^*)$-policy having interventions $\{(\tau^*_k,Y^*_k): k \in \NN\}$.  Let $\delta = F^* - F_1 > 0$.  Note that for the $(w_1,y_1)$-policy
$$F_1 = J(R^{(w_1,y_1)}) = \lim_{T \to \infty} T^{-1} \EE_x\bigg[\int_0^T c(X_1(s))\, ds + \sum_{k=1}^{\infty} I_{\{\tau^{(1)}_k\le T\}} (p Y^{(1)}_k -K)\bigg].$$
Thus there exists some $T_{1,x}$ such that for all $T \geq T_{1,x}$ and after multiplying through by $T$, 
$$\EE_x\bigg[\int_0^T c(X_1(s))\, ds + \sum_{k=1}^{\infty} I_{\{\tau^{(1)}_k\le T\}} (p Y^{(1)}_k -K)\bigg] <  (F_1 + \frac{\delta}{2}) T.$$
Using the same reasoning, there exists some $T^*_x$ such that for all $T \geq T^*_x$, 
$$\EE_x\bigg[\int_0^T c(X^*(s))\, ds + \sum_{k=1}^{\infty} I_{\{\tau^*_k\le T\}} (pY^*_k -K)\bigg] > (F^* - \frac{\delta}{2}) T.$$
Therefore since $F_1 + \frac{\delta}{2} = F^*-\frac{\delta}{2}$, for $T \geq T^*_x \vee T_{1,x}$, the expected payoff up to time $T$ of the $(w^*,y^*)$-policy has overtaken the corresponding expected payoff for the $(w_1,y_1)$-policy.
\end{rem}}


\section{The Singular Control Problem} \label{sect:singular}

This section is devoted to   the singular control problem \eqref{e:singular-reward}. 

\begin{defn}[Singular Admissibility] \label{singular-admissible-policy}
A process $Z = \{Z(t), t\ge 0\}$ is an {\em admissible singular control}\/ if it is nondecreasing, right-continuous, and $\{\F_t\}$-adapted process with $Z(0-) =0$, the controlled state process $X^Z$ satisfies \eqref{e:SDE-XZ} with $X^Z(t) \in \E$ for all $t \geq 0$, and when $a$ is a natural boundary,
\begin{align}\label{e:sing-transversality} 
\lim_{t\to\infty}\limsup_{n\to\infty}t^{-1}\EE [\xi^-(X^Z(t\wedge \beta_n)) ] =0,
 \end{align} 
where $\beta_n: =\inf\{t\ge0: X^Z(t) \notin (a_n,b_n)\}$, with $\{a_n\}$ and  $\{b_n\}$ as in Definition \ref{admissible-policy}(iv)(b).  Denote the set of admissible singular controls by $\A_{\rm Sing}$.

\end{defn}

We introduce the important subclass of reflection policies.

\begin{defn}[$x$-Reflection Policies] \label{local-time-policy}
For each $x\in \I$,  let $L_x$ denote the local time process at $x$ of the uncontrolled process $X_0$.  Define the {\em $x$-reflection policy} $Z_x = \{Z_x(t): t \geq 0\}$ by
$$Z_x(t)= (x_0-x)^+ + L_x(t), \qquad t \geq 0.$$ 
The policy $Z_x$ keeps the controlled process $X^{Z_x}$ within $[a, x]$ by reflection at $x$; if $x_0 > x$,  an initial jump to $x$ occurs at time $t=0$.   
\end{defn}


\begin{lem} \label{Zx-admissible}
For each $x \in \I$, the singular policy $Z_x$ is admissible in the sense of \defref{singular-admissible-policy}. Moreover, the following assertions are true: 
\begin{itemize}
  \item[(a)] the long-term average reward for the reflection policy $Z_x$ is 
\begin{equation} \label{e:Jp(Z_x)=hp}  
\wdh  J(Z_x) = 
\int_a^b r(u) \pi_x(u)\, du, 
\end{equation}
in which $r$ is given by \eqref{e:r_p}, and \begin{align}\label{e:singular-inv-meas} \pi_x(u) =  
  \frac{m(u)}{M[a,x]}I_{[a, x]}(u);  
\end{align} 
  \item[(b)]  $\lim_{x \to a} \wdh J(Z_x) = r(a) = \gamma c(a) + p \mu(a)$ and 
   $\lim_{x \to b} \wdh J(Z_x) = \gamma \bar c(b)$. 
 \end{itemize}
\end{lem}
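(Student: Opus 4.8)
The plan is to treat the $x$-reflection policy $Z_x$ as the singular analogue of a degenerate impulse policy: a reflecting boundary at $x$ is the limiting case of a $(w,y)$-policy as $w \uparrow y = x$, so the invariant measure and reward should follow from the same renewal-theoretic machinery used in Section~\ref{sect-preliminaries}, and the two boundary limits in part (b) should fall out of the asymptotic lemmas already established.

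\emph{Admissibility.} First I would verify that $X^{Z_x}$ takes values in $\E$ and that $Z_x$ is nondecreasing, right-continuous and $\{\F_t\}$-adapted; this is immediate from the construction, since $L_x$ is a continuous nondecreasing adapted process and the initial jump $(x_0-x)^+$ forces $X^{Z_x}(0) \le x$. When $a$ is a natural boundary I must check the transversality condition \eqref{e:sing-transversality}. Here I would mimic the proof of \lemref{lem-(wy)admissible}: apply It\^o's formula to $\xi(X^{Z_x}(t\wedge\beta_n))$, use $A\xi \equiv 1$ from \eqref{e:AxiAg}, and observe that the singular term contributes $-\int_0^{t\wedge\beta_n}\xi'(X^{Z_x}(s))\,dZ_x(s)$, which is nonpositive since $\xi' > 0$ and $Z_x$ is nondecreasing; moreover $X^{Z_x}$ is bounded above by $x_0 \vee x < b$, so $\xi$ is bounded above along the path and the argument bounding $\EE[|\xi(X^{Z_x}(t\wedge\beta_n))|]$ by $-\EE[\xi(X^{Z_x}(t\wedge\beta_n))] + C$ goes through verbatim, yielding \eqref{e:sing-transversality}.

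\emph{Part (a).} The controlled process $X^{Z_x}$ is the diffusion $X_0$ reflected at $x$ (with an instantaneous initial jump when $x_0 > x$), hence a positive recurrent diffusion on $[a,x]$ whose invariant density is proportional to the speed density $m$ restricted to $[a,x]$; normalizing by $M[a,x]$ gives \eqref{e:singular-inv-meas}. I would then compute $\wdh J(Z_x)$ by a renewal/ergodic argument paralleling the derivation of the integral representation of $F$: the running-reward part converges to $\int_a^b \gamma c(u)\,\pi_x(u)\,du$ by the ergodic theorem, and the term $t^{-1}\EE[p\,Z_x(t)]$ converges to $p$ times the long-run rate of local-time accumulation, which by the standard identity for reflected diffusions equals $p\int_a^b \mu(u)\,\pi_x(u)\,du$ — equivalently, apply It\^o to $\id(X^{Z_x})$, use $A\id = \mu$, divide by $t$, let $t\to\infty$, and note the boundary term vanishes since $X^{Z_x}$ is bounded. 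Adding the two pieces gives $\int_a^b [\gamma c(u) + p\mu(u)]\,\pi_x(u)\,du = \int_a^b r(u)\,\pi_x(u)\,du$. One can also recognize this as exactly $h(x)$ via the last representation in \eqref{sect 2-e-h-defn}, which is the connection flagged after \lemref{lem-ell-limit at b}.

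\emph{Part (b).} For $x \to a$: as $x \downarrow a$, $\pi_x$ concentrates at $a$ and continuity of $r$ (from Conditions~\ref{c-cond} and~\ref{diff-cnd}(c), which give continuity of $c$ and $\mu$ up to $a$) yields $\wdh J(Z_x) \to r(a) = \gamma c(a) + p\mu(a)$. For $x \to b$: since $\wdh J(Z_x) = h(x)$, this is precisely \lemref{lem-ell-limit at b}, giving $\lim_{x\to b}\wdh J(Z_x) = \gamma\bar c(b)$. The main obstacle I anticipate is the rigorous justification of the local-time rate identity $\lim_{t\to\infty}t^{-1}\EE[L_x(t)] = \int_a^b\mu\,d\pi_x$ together with the vanishing of the It\^o boundary term $t^{-1}\EE[\id(X^{Z_x}(t\wedge\beta_n))]$ uniformly enough to interchange limits; the boundedness of $X^{Z_x}$ makes this routine when $b<\infty$, but when $b=\infty$ (so $x$ can be large, though still $X^{Z_x}\le x_0\vee x$) one must be a little careful, and invoking the stopping times $\beta_n$ and the non-explosion guaranteed by \cndref{diff-cnd} is the cleanest route.
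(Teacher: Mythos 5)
Your part (a) and the $x\to a$ limit follow the paper's own route (invariant density proportional to the speed density, ergodic theorem for the running reward, It\^o applied to $\id$ to identify $\lim_t t^{-1}\EE[L_x(t)]=\int\mu\,\pi_x\,du$), and your $x\to b$ limit via the identity $\wdh J(Z_x)=h(x)$ together with \lemref{lem-ell-limit at b} is a legitimate and arguably cleaner shortcut than the paper's weak-convergence argument (which sends $\pi_x\Rightarrow\delta_b$ or $\pi_x\Rightarrow\pi$ and invokes \propref{lem-mu-at-b}(iii),(iv) to kill the drift contribution).

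There is, however, one concrete slip in your admissibility argument. You claim that because the singular term $-\int_0^{t\wedge\beta_n}\xi'(X^{Z_x}(s))\,dZ_x(s)$ is nonpositive, the argument of \lemref{lem-(wy)admissible} ``goes through verbatim.'' But nonpositivity of that term only yields an \emph{upper} bound $\EE[\xi(X^{Z_x}(t\wedge\beta_n))]\le \xi(x_0)+t$, whereas controlling $\xi^-$ requires the opposite direction: since $\xi^-(X)\le C-\xi(X)$ (using that $X^{Z_x}\le x_0\vee x$ bounds $\xi^+$), you need $\liminf_t\liminf_n t^{-1}\EE[\xi(X^{Z_x}(t\wedge\beta_n))]\ge 0$, i.e.\ an upper bound on $\EE\bigl[\int\xi'\,dZ_x\bigr]=\xi'(x)\,\EE[L_x(t\wedge\beta_n)]+O(1)$. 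This is exactly where the ergodic local-time identity $\lim_t t^{-1}\EE[L_x(t)]=\int\mu\,\pi_x\,du=1/\xi'(x)$ enters, so that the local-time term asymptotically cancels the drift term $\EE[t\wedge\beta_n]$ coming from $A\xi\equiv 1$ and one gets $t^{-1}\EE[\xi(X^{Z_x}(t\wedge\beta_n))]\to 0$ exactly, in parallel with the exact cancellation by the jump terms in \lemref{lem-(wy)admissible}. You do establish this identity in your part (a), so the repair is at hand; but as written, the admissibility step rests on an inequality pointing the wrong way. The paper's proof avoids this by explicitly invoking the ergodicity of linear diffusions at this point rather than a sign argument.
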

\begin{rem} \label{impulse-singular-connection}
The value of $\wdh  J(Z_x)$ of \eqref{e:Jp(Z_x)=hp} 
gives a remarkable connection between the impulse and singular control problems. 
Using  the integral representation of the function $h$ given  in \eqref{sect 2-e-h-defn}, we note that $\wdh  J(Z_x) = h(x)$ for each $x\in \I$.  Thus the function $h$ can be interpreted as the long-term average reward  of the $x$-reflection policy for the singular control problem.  In particular, 
  the impulse optimality condition \eqref{e-1st-order-condition} indicates that the optimal value $F^* =F(w^*,y^*)$ for the impulse control problem in \eqref{e:reward-fn} is the value of the $y^*$-reflection policy for the singular problem and also for the $w^*$-reflection policy when $w^* > a$.
\end{rem}
\begin{proof}
It is easy to see that $Z_x$  is a nondecreasing, right-continuous, and $\{\F_t\}$-adapted process
  with $Z(0-) =0$.  Referring to Section~5 of Chapter~15 in \cite{KarlinT81}, the diffusion $X^{Z_x}$ possesses the invariant measure having density $\pi_x$ on $\E$.
Then, using the ergodicity for linear diffusions in Chapter II, Section 6 of \cite{boro:02}  and a similar argument as that in the proof of Lemma \ref{lem-(wy)admissible}, we can show that the controlled process $X^{Z_x}$ satisfies \eqref{e:sing-transversality} and hence $Z_x\in \AS$. 

 Using the invariant density $\pi_x$ of \eqref{e:singular-inv-meas} and (a) of Chapter II.6.37 in  \cite{boro:02}, we have 
\begin{align}\label{e:lta-c}
   \lim_{t\to\infty} t^{-1} \EE_{x_0}\left[ \int_0^t \gamma c(X^{Z_x}(s)) ds  \right] =\int_a^b \gamma c(u) \pi_x(u)\, du.
\end{align} 
Turning to the reward obtained by reflection, first observe that 
$$\lim_{t\to \infty} t^{-1} \EE_{x_0}[ p Z(t)] = \lim_{t\to \infty} t^{-1} \left(p (x_0-x)^+ + \EE_{x_0}[p L_x(t)]\right) = p\, \lim_{t\to \infty} t^{-1} \EE_{x_0}[L_x(t)].$$
To compute this limit, note that $X^{Z_x}(t)$ is bounded.  Thus we can apply Itô's formula to $\id(X^{Z_x}(t))$ and then take expectations to obtain
\begin{align*}
\EE_{x_0}[X^{Z_x}(t)] & = x_0 + \EE_{x_0}\left[\int_0^t \mu(X^{Z_x}(s)) ds  -(x-x_0)^+ -L_x(t)\right]. 
\end{align*} 
Now dividing both sides by $t$ and sending $t\to\infty$ and again using (a) of Chapter II.6.37 in \cite{boro:02}, we have 
\begin{align*}
  0& = \lim_{t\to\infty} t^{-1} \EE_{x_0}\left[\int_0^t \mu(X^{Z_x}(s)) ds  \right] - \lim_{t\to\infty} t^{-1} \EE_{x_0}[L_x(t)]  \\ &
   = \int_a^x \mu(u) \pi_x(u)\, du - \lim_{t\to\infty} t^{-1} \EE_{x_0}[L_x(t)],
\end{align*} 
Hence we have 
\begin{equation} \label{lta-exp-local-time}
\lim_{t\to\infty} t^{-1} \EE_{x_0}[L_x(t)] = \int_a^x \mu(u) \pi_x(u)\, du
\end{equation} 
and the assertion \eqref{e:Jp(Z_x)=hp} now follows by combining \eqref{e:lta-c} and \eqref{lta-exp-local-time}. 

The first limit in part   (b) follows immediately from the facts that $c$ and $\mu$ extend continuously at $a$ with finite values and the observation that the stationary distribution having density $\pi_x$ converges weakly to a unit point mass at $a$ as $x \to a$.  The same argument applies at $b$ once one notices that the stationary distributions having density $\pi_x$ converge weakly (as $x \to b$) to a unit point mass at $b$ when $M[a,b] = \infty$ and to the stationary measure $\pi$ of \propref{lem-mu-at-b}(iii) having density $\frac{m}{M[a,b]}$ when $M[a,b] < \infty$.  This argument also uses \lemref{lem-mu-at-b}(iii) and (iv) to eliminate any dependence on $\mu$ in the limit $\gamma \bar c(b)$.
\end{proof}

The equality of the long-term average expected local time with the mean drift rate under the stationary distribution in \eqref{lta-exp-local-time} seems surprising.  The local time increases only when $X^{Z_x}$ is at $x$, whereas the mean drift rate is determined by the values of $u < x$.  To have a stationary process, however, the increase in the mean drift must be counterbalanced by the mean decrease provided by the local time so \eqref{lta-exp-local-time} does intuitively make sense.

\comment{
The expression \eqref{e:Jp(Z_x)=hp} enables us to use weak convergence to establish the limiting values of $\wdh J(Z_x)$ as $x$ approaches the boundaries.

\begin{lem}  \label{Zx-asymptotics}
For each $x \in \I$, let $Z_x$ denote the reflection policy of \defref{local-time-policy} so that the process $X^{Z_x}$ remains in the interval $\E \cap [a,x]$ .  Then
\begin{itemize}

\end{itemize}
\end{lem}

\begin{proof}

\end{proof}} 
Recall that the $\wdh J(Z_x) = h(x)$.  We now impose a condition, similar in spirit to \cndref{cond-interior-max}, that ensures that there exists some active singular control policy (more precisely, the $Z_{\wdt x}$ policy) that outperforms the do-nothing policy $\mathfrak{R}$. 
  
\begin{cnd}  \label{cond-hp-int-max}
There exists some $\wdt x\in\I$ so that 
  \begin{equation}  \label{e:cond-hp-barc}
  h(\wdt x) > \gamma \bar c(b).
\end{equation}
\end{cnd} 
Note that if Conditions \ref{diff-cnd}, \ref{c-cond},  \ref{3.9-suff-cnd}, and \ref{cond-interior-max} hold, then Condition \ref{cond-hp-int-max} holds automatically since $h(\wdh y) > F^* > \gamma \bar c(b)$ by \eqref{e:F<ell} and \eqref{e-1st-order-condition} or \eqref{e2-1st-order-condition}.   

\begin{rem} 
{  The argument leading to \eqref{e-y-hat-p-defn}} and the proof of \lemref{lem-h-new} rely on \cndref{cond-interior-max} to establish that $h(w^*) \geq h(y^*) > \gamma \bar c(b)$ in order to have the existence of $\wdt y$ such that $h'(\wdt y) = 0$.   The existence of $\wdh y$ of \eqref{e-y-hat-p-defn} then follows.  \cndref{cond-hp-int-max} provides a similar strict inequality for the singular control problem.  Therefore {  the argument preceding \eqref{e-y-hat-p-defn}} and \lemref{lem-h-new} on the behavior of $r$ and $h$, respectively, remain valid when \cndref{cond-hp-int-max} is imposed instead of \cndref{cond-interior-max}.
\end{rem} 

\begin{prop}\label{prop-singular-optimal} Assume Conditions \ref{diff-cnd}, \ref{c-cond},  \ref{3.9-suff-cnd}, and \ref{cond-hp-int-max} hold.
   Then for any admissible singular control policy $Z$, we have $\wdh J(Z) \leq h(\wdh y)$. Moreover,  the $Z_{\wdh y}$ policy is an optimal admissible singular control policy. 
\end{prop}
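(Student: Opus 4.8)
The plan is to mimic the verification-type argument used for the impulse problem in Proposition~\ref{prop-optimal-control}, but now with the function $h$ playing the role that $F^*$ and $G$ together played there. First I would introduce the candidate ``singular reward potential'' $\widehat G(x) := h(\widehat y)\,\xi(x) - \gamma g(x)$, by analogy with \eqref{e:G_p} but with $F^*$ replaced by $h(\widehat y)$. Using \eqref{e:AxiAg} one immediately gets $A\widehat G(x) + \gamma c(x) - h(\widehat y) = 0$ for all $x \in \I$. The key structural fact I need is that $\widehat G'(x) = \xi'(x)\bigl(h(\widehat y) - h_0(x)\bigr) \geq p$ for all $x$, equivalently $h(\widehat y) - h_0(x) \geq p/\xi'(x)$, i.e. $h(\widehat y) \geq h(x)$ for all $x \in \I$; this last inequality is exactly Lemma~\ref{lem-h-new} (valid here since \cndref{cond-hp-int-max} substitutes for \cndref{cond-interior-max} per the preceding remark). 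So $\widehat G'(x) \geq p$ on $\I$, which says that for any increment of a singular control the ``potential gain'' dominates the revenue rate $p$: for $v \leq x$, $\widehat G(x) - \widehat G(v) \geq p(x-v)$.

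Next I would run the Itô/Dynkin estimate. For an arbitrary admissible singular control $Z$ with controlled process $X = X^Z$, decompose $Z = Z^c + \sum \Delta Z$ into its continuous part and jumps, apply Itô's formula to $\widehat G(X(t\wedge\beta_n))$, and use $A\widehat G = h(\widehat y) - \gamma c$ together with $\widehat G'(x) \geq p$ to bound the contributions of both $dZ^c$ and the jumps from below by $p\,dZ$. This gives, after rearranging,
\begin{align*}
\widehat G(x_0) + h(\widehat y)\,\EE_{x_0}[t\wedge\beta_n] - \EE_{x_0}[\widehat G(X(t\wedge\beta_n))]
\;\geq\; \EE_{x_0}\Bigl[\int_0^{t\wedge\beta_n}\!\gamma c(X(s))\,ds + p\,Z(t\wedge\beta_n)\Bigr].
\end{align*}
Then I divide by $t$, let $n\to\infty$ (using $\beta_n\to\infty$ a.s.\ and monotone convergence on the nonnegative right side, the nonnegativity of $\gamma c$ and of the ``gain minus revenue'' jump terms), and let $t\to\infty$; invoking the transversality condition \eqref{e:sing-transversality} from \defref{singular-admissible-policy} (handled exactly as in Lemma~\ref{lem-Gp-limit}: when $a$ is natural, $\widehat G(x) \geq -C - h(\widehat y)\,\xi^-(x)$ using Lemma~\ref{lem2-Bg/Bxi:b} and the strict inequality $h(\widehat y) > \gamma\bar c(b)$ near $b$; when $a$ is an entrance point, $\widehat G$ is bounded below and Fatou applies) kills the $\widehat G(X(t\wedge\beta_n))$ term in the limit, yielding $h(\widehat y) \geq \widehat J(Z)$.

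Finally I would verify that the $Z_{\widehat y}$ policy attains $h(\widehat y)$. This is immediate from Lemma~\ref{Zx-admissible}(a) and Remark~\ref{impulse-singular-connection}: $Z_{\widehat y} \in \AS$ and $\widehat J(Z_{\widehat y}) = h(\widehat y)$. (Alternatively, one checks that for $Z_{\widehat y}$ the reflection keeps $X$ in $[a,\widehat y]$ where $\widehat G' \equiv$ nothing special, but the local time increments occur only at $\widehat y$ and the boundary condition can be arranged to make the Itô inequality an equality — the cleaner route is simply to cite Lemma~\ref{Zx-admissible}.) The main obstacle I anticipate is the careful bookkeeping in the Itô step: singular controls have both a continuous reflection part and possible jumps, and one must argue uniformly that $\widehat G'(x) \geq p$ controls the continuous part while $\widehat G(X(\tau-)) - \widehat G(X(\tau)) \geq p\,\Delta Z(\tau)$ controls each jump — plus the limit-interchange justifications (monotone convergence, the transversality estimate) need the sign structure of $\widehat G$ near both boundaries, which is where Lemmas~\ref{lem2-Bg/Bxi:b} and \ref{lem-h-new} do the real work.
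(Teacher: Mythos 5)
Your proposal is correct and follows the same verification strategy as the paper (It\^o's formula applied to a smooth potential satisfying $Av+\gamma c\le h(\wdh y)$ and $v'\ge p$, transversality handled as in Lemma~\ref{lem-Gp-limit}, attainment via Lemma~\ref{Zx-admissible}(a)), but your candidate function differs from the paper's. The paper uses the piecewise function $U$ of \eqref{e:singular-u}, equal to $h(\wdh y)\xi-\gamma g$ on $(a,\wdh y]$ and extended linearly with slope $p$ on $(\wdh y,b)$; this is the genuine QVI solution of \eqref{e:HJB-singular} (exactly one constraint active at each point), at the cost of having to check $C^2$ matching at $\wdh y$ and to verify separately that $AU+\gamma c-h(\wdh y)=r-h(\wdh y)\le r-h<0$ on $(\wdh y,b)$. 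You instead take $\wdh G=h(\wdh y)\xi-\gamma g$ globally, which is automatically $C^2$ and satisfies $A\wdh G+\gamma c-h(\wdh y)\equiv 0$; the only fact needed is $\wdh G'\ge p$ everywhere, which is precisely $h(\wdh y)\ge h(x)$ from Lemma~\ref{lem-h-new}, and the lower-bound estimates near $a$ and $b$ go through identically since $\gamma g(x)/\xi(x)\to\gamma\bar c(b)<h(\wdh y)$. Your $\wdh G$ is thus a supersolution rather than the QVI solution, which is all the upper bound requires; the paper's $U$ buys the exact variational structure (useful if one wants to identify the value function or the free boundary directly), while yours buys a shorter verification with no gluing argument. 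Both are complete proofs.
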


\begin{proof} 
 As mentioned above, the validity of the argument leading to \eqref{e-y-hat-p-defn} gives the existence of $\wdh y$ as in \eqref{e-y-hat-p-defn}.  Define the function  
\begin{align}
  U(x) & = \begin{cases}  h(\wdh y) \xi(x)   -  \gamma g(x), & \text{ if }x \in (a, \wdh y], \\ 
  U(\wdh y) + p( x - \wdh y ), &  \text{ if } x  \in ( \wdh y, b). 
  \end{cases} \label{e:singular-u}
 \end{align} 
By examining the left and right hand limits of $U, U'$, and $U''$ at $\wdh y$, we can readily verify that    $U$ is  twice continuously differentiable on $\I$. We next show   that  the pair $(U, h(\wdh y))$ solves the associated QVI for the singular control problem 
  \begin{align}  \label{e:HJB-singular}
  \max\{A U(x) + \gamma c(x) - \lambda_0, -U'(x)+ p\} = 0, \quad x\in \I,
 \end{align} 
 where $h$ and $\wdh y$ are  defined respectively in \eqref{e-h-fn-defn} and \eqref{e-y-hat-p-defn}.
\comment{
Obviously $U$ is continuous.  Using the definition of $h$ in \eqref{e-h-fn-defn}, we have 
 $$U'(\wdh y-) =   h(\wdh y) \xi'(\wdh y)- \gamma g'(\wdh y)= p = U'(\wdh y+).$$ 
 Similarly, using \eqref{e:xi-derivatives}, \eqref{e:g-derivatives},  \eqref{e:h'-expression}, and the fact that $h'(\wdh y) = 0$, we have 
 \begin{align*}
  U''(\wdh y-) & = h(\wdh y) \xi''(\wdh y) - \gamma g''(\wdh y) \\ 
  & = h(\wdh y)\bigg(-\frac{2\mu(\wdh y)}{\sigma^2(\wdh y)} \xi'(\wdh y) + s(\wdh y)m(\wdh y)\bigg) - \gamma \bigg(-\frac{2\mu(\wdh y)}{\sigma^2(\wdh y)} g'(\wdh y) + s(\wdh y)m(\wdh y)c(\wdh y)\bigg)\\ 
  &  = -\frac{2\mu(\wdh y)}{\sigma^2(\wdh y)} [h(\wdh y)\xi'(\wdh y) - \gamma g'(\wdh y)] + \frac{2}{\sigma^2(\wdh y)} [h(\wdh y) - \gamma c(\wdh y)] \\ 
  & = \frac{2}{\sigma^2(\wdh y)} [- p \mu(\wdh y) + h(\wdh y) - \gamma c(\wdh y) ] \\ 
  & = \frac{2}{\sigma^2(\wdh y)} [h(\wdh y) - r(\wdh y) ] \\ 
  & = 0 =U''(\wdh y+).
 \end{align*} 
 Thus $U$ is twice continuously differentiable on $\I$. 

Next,  we show that $(U, h(\wdh y))$ is a solution to \eqref{e:HJB-singular}.} 
To this end, we note that   $A U(x) + \gamma c(x) -   h(\wdh y) =0$ for $x\in (a, \wdh y)$  and $U'(x) =p$ for $x\in(\wdh y, b)$.  For $x \in (a, \wdh y)$, using  the facts that $h(\wdh y) =\max\{h(x): x\in \I\}$, that $\xi'(x) > 0$ and \eqref{e-h-fn-defn}, we have $U'(x) = h(\wdh y)\xi'(x) - \gamma g'(x) \ge h(x)\xi'(x) - \gamma g'(x)  = p  .$ 
Thus $-U'(x) + p \le 0$ for $x\in (a, \wdh y)$. For $x \in (\wdh y, b)$, we compute
\begin{align*}
  A U(x) + \gamma c(x) -   h(\wdh y) = p \mu(x) + \gamma c(x) - h(\wdh y) \le  r(x)- h(x)  < 0,
\end{align*} 
where the last inequality follows from \eqref{e:h'-expression} and the fact that $h$ is strictly decreasing on   $ (\wdh y, b)$. This shows that $(U, h(\wdh y))$ is indeed a solution to \eqref{e:HJB-singular}.

 We now show that $\wdh J(X^Z) \leq h(\wdh y)$ for all admissible singular control processes $Z$.
 Fix an arbitrary admissible singular control $Z\in \A_{\mathrm {Sing}}$ and denote by $X$ the controlled process. For each $n\in \NN$, define $\beta_{n}$  as in \defref{singular-admissible-policy}.  By the It\^o formula, we have 
 \begin{align}\label{e0:verification-proof-lta}
		\nonumber	\EE_{x_0}[U(X(T\wedge \beta_{n})]) &= U(x_0) +\EE_{x_0}\left[ \int_{0}^{T\wedge \beta_{n}} A U(X(s))d s - \int_{0}^{T\wedge \beta_{n}} U'(X(s-))\, dZ({s})^{c}
       \right.  \\ & \qquad  \left.
  +\sum_{0\leq s \leq T\wedge \beta_{n}} \left[U(X(s)) - U(X(s-))\right]\right].
\end{align}
The HJB equation \eqref{e:HJB-singular} implies that $   U' (x) \ge p$. Note also that $\Delta X(s)  =  X(s) -   X(s-) = -\Delta Z(s)  \le 0$.
		Consequently, we can use  the mean value theorem to obtain 
\begin{equation}\label{e1:verification-proof-lta}
		U( X(s)) - U( X(s-)) = U'(\theta)\Delta X(s) \le -p \Delta Z(s),
\end{equation} 
where $\theta $ is between $X(s)$ and $X(s-)$. Note that \eqref{e:HJB-singular} also implies that $A U(x) \le -\gamma c (x) + h(\wdh y)$. Plugging this
 and \eqref{e1:verification-proof-lta} into \eqref{e0:verification-proof-lta} gives us
\begin{align}\label{e2:verification-proof-lta}
		\EE_{x_0}[U(X(T\wedge \beta_{n}))] &  \le  U(x_0) -\EE_{x_0}\left[ \int_{0}^{T\wedge \beta_{n}}  \gamma c(X(s))d s +  h(\wdh y) (T\wedge \beta_{n})  
     - pZ(T\wedge \beta_{n})\right].
\end{align} Using an argument analogous to that in the proof of Lemma \ref{lem-Gp-limit}, it follows from \eqref{e:sing-transversality} that  $$\liminf_{t\to\infty}\liminf_{n\to\infty} T^{-1} \EE_x[U(X(T\wedge \beta_{n}))] \ge 0.$$ Therefore, 
by rearranging terms of \eqref{e2:verification-proof-lta}, dividing both sides by $T$,  and then sending $T\to\infty$, we 
obtain from the monotone convergence theorem that
\begin{displaymath}
		\wdh J(Z) = \liminf_{T\to\infty}\frac1T \EE_{x_0}\left[ \int_{0}^{T} \gamma c(X(s))d s +  p Z(T)   \right] \le h(\wdh y).
\end{displaymath} 

 With $x = \wdh y$, we have from \eqref{e:Jp(Z_x)=hp} that $\wdh J(Z_{\wdh y}) = h(\wdh y)$ and hence the $Z_{\wdh y}$ policy  is an optimal singular control policy.
\end{proof}

\section{Sensitivity Results} \label{sect:sensitivity}
This section examines the sensitivity of optimal quantities to changes in the parameters $p$, $K$ and $\gamma$ for the impulse control problem.    We impose Conditions \ref{diff-cnd}, \ref{c-cond}, \ref{3.9-suff-cnd} and \ref{cond-interior-max} to guarantee the uniqueness of the optimal $(w^*,y^*)$-policy.  

This analysis follows the same general argument for each parameter and is more straightforward when the optimizer $w^* > a$.  The analysis when $a$ is an entrance boundary and $w^* = a$ requires more care but the argument follows the same line of reasoning.  We therefore conduct the sensitivity analysis for those models for which $w^* > a$.

The first result establishes the differentiability of the optimizers in each of the parameters.  Let $q \in \{ p, K, \gamma\}$ denote the parameter under consideration and, to emphasize the $q$-dependence, denote the function $F$ of \eqref{e:F_K} by $F(q):=F(q;w,y)$, the optimizing pair by $(w_q^*,y_q^*)$ and the function $h$ of \eqref{e-h-fn-defn} by $h(q) := h(x;q)$.  Note that $h$ does not involve the parameter $K$ so $h(x;K) = h(x)$ but when $q=p$ or $q=\gamma$, $h(x;q)$ varies as $q$ varies and the other parameters are held fixed.  

First observe that if Condition~\ref{cond-interior-max} holds for some $q_0$, the continuity of $F(q)$ implies that it also holds for all $q$ in some open interval $U_1$ containing $q_0$.  Also, Conditions~\ref{diff-cnd}, \ref{c-cond}, and \ref{3.9-suff-cnd} do not involve the parameter $q$.  Therefore by Propositions \ref{prop-Fmax} and \ref{prop-sec3-uni+existence}, the family of optimization problems having $q \in U_1$ has unique optimizers which satisfy the first-order conditions \eqref{e-1st-order-condition}.  Define the vector-valued function $\Psi: U_1 \to \cR$ by
$$\Psi(q) = (w_q^*,y_q^*) = \argmax_{(w,y)\in \cR} F_q(w,y).$$

\begin{lem}\label{lem1-prop41-pf}
Assume Conditions \ref{diff-cnd},   \ref{c-cond}, \ref{3.9-suff-cnd} hold. Let $q_0$ be such that Condition~\ref{cond-interior-max} holds and let $U_1$ be as above so that Condition~\ref{cond-interior-max} holds for all $q \in U_1$. Then $\Psi$ is continuously differentiable at $q_0$. 
\end{lem}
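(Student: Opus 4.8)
The plan is to characterize the optimizing pair $(w_q^*,y_q^*)$ by first‑order conditions, establish continuity of $\Psi$ at $q_0$, and then invoke the implicit function theorem.

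\textbf{Setting up the first‑order system.} Since $w_{q_0}^*>a$ (the standing hypothesis of this section) and $y_{q_0}^*<b$, the optimizer is an interior critical point of $F(q_0;\cdot)$, so $\partial_wF=\partial_yF=0$ there. Writing $F=N/D$ with $N(w,y;q)=p(y-w)-K+\gamma(g(y)-g(w))$ and $D(w,y)=\xi(y)-\xi(w)>0$ on $\cR$, a direct computation using \eqref{e:xi-derivatives}, \eqref{e:g-derivatives} and the definition \eqref{e-h-fn-defn} of $h$ gives $\partial_wF=0\iff h(w;q)=F(q;w,y)$ and $\partial_yF=0\iff h(y;q)=F(q;w,y)$; this is just \eqref{e-1st-order-condition}. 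Accordingly I would define, on $\cR\times U_1$,
$$\Phi(w,y;q):=\bigl(\,h(w;q)-h(y;q),\ \ F(q;w,y)-h(w;q)\,\bigr).$$
Because $g,\xi\in C^2(\I)$, $\xi'>0$, $h\in C^1(\I)$ (by \eqref{e:h'-expression}), and $q$ enters both $h$ and $N$ polynomially, $\Phi$ is $C^1$ on $\cR\times U_1$. By \propref{prop-Fmax} and \propref{prop-sec3-uni+existence}, for each $q\in U_1$ the pair $\Psi(q)$ is the unique maximizer of $F(q;\cdot)$, and whenever $w_q^*>a$ it is a zero of $\Phi(\cdot;q)$.

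\textbf{Continuity of $\Psi$ at $q_0$.} The key step is to confine the optimizers to a fixed compact subset of $\cR$ for $q$ near $q_0$, via three observations. (i) By \lemref{lem-ell-limit at b}, $h(x;q)\to\gamma\bar c(b)$ as $x\to b$, uniformly for $q$ in a compact neighborhood of $q_0$ (the $q$‑dependence of $h$ being through the bounded coefficients $p,\gamma$), whereas $h(y_q^*;q)=F^*(q)\ge F(q;\wdt w,\wdt y)$, which by \cndref{cond-interior-max} (holding throughout $U_1$) stays strictly above $\gamma\bar c(b)$ near $q_0$; hence $y_q^*$ is bounded away from $b$. (ii) By \lemref{lem-h-new}, $h(\cdot;q_0)$ is strictly increasing on $(a,\wdh y)$ and $w^*\in(a,\wdh y)$, so $\lim_{x\downarrow a}h(x;q_0)<h(w^*;q_0)=F^*(q_0)$; joint continuity of $h$, together with $F^*(q)\ge F(q;w^*,y^*)\to F^*(q_0)$, then keeps $w_q^*$ bounded away from $a$. (iii) On the resulting compact window $[a+r_1,\beta_1]$, if $y_q^*-w_q^*\to0$ along some $q_n\to q_0$, then $D(w^*_{q_n},y^*_{q_n})\to0^+$ while $N(q_n;w^*_{q_n},y^*_{q_n})\to-K<0$ (the fixed cost dominates as the cycle shrinks), forcing $F^*(q_n)\to-\infty$, which contradicts $F^*(q_n)\ge F(q_n;w^*,y^*)\to F^*(q_0)$. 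Thus the optimizers lie in a compact $\mathcal K\subset\cR$. Consequently any subsequential limit $(\bar w,\bar y)\in\mathcal K$ of $\Psi(q_n)$, $q_n\to q_0$, satisfies $F(q_0;\bar w,\bar y)=\lim F^*(q_n)\ge\lim F(q_n;w^*,y^*)=F^*(q_0)$, so $(\bar w,\bar y)$ maximizes $F(q_0;\cdot)$ and hence equals $(w^*,y^*)$ by uniqueness; therefore $\Psi(q_n)\to(w^*,y^*)=\Psi(q_0)$. In particular $w_q^*>a$ for $q$ near $q_0$, so the interior first‑order characterization—i.e.\ $\Phi(\Psi(q);q)=0$—is valid on a neighborhood of $q_0$.

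\textbf{Jacobian and the implicit function theorem.} Evaluating $D_{(w,y)}\Phi$ at $\bigl((w^*,y^*),q_0\bigr)$ and using $\partial_wF=\partial_yF=0$ at the optimizer,
$$D_{(w,y)}\Phi\big|_{((w^*,y^*),q_0)}=\begin{pmatrix} h'(w^*) & -h'(y^*) \\ -h'(w^*) & 0 \end{pmatrix},$$
whose determinant is $-h'(w^*)h'(y^*)$. Since $w^*<\wdh y<y^*$ and, by \eqref{e:h'-expression}, the definition \eqref{e-y-hat-p-defn} of $\wdh y$, and the computation in the proof of \lemref{lem-h-new}, one has $h'>0$ on $(a,\wdh y)$ and $h'<0$ on $(\wdh y,b)$, this determinant is strictly positive, so the Jacobian is invertible. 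The implicit function theorem then yields a $C^1$ map $\psi$ on a neighborhood $W$ of $q_0$ with $\psi(q_0)=(w^*,y^*)$, $\Phi(\psi(q);q)=0$, and $\psi(q)$ the unique zero of $\Phi(\cdot;q)$ in a neighborhood $V$ of $(w^*,y^*)$. By the continuity of $\Psi$ established above, $\Psi(q)\in V$ and $\Phi(\Psi(q);q)=0$ for $q$ near $q_0$, whence $\Psi\equiv\psi$ there; thus $\Psi$ is continuously differentiable at $q_0$.

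\textbf{Main obstacle.} The implicit function step is routine once the Jacobian nondegeneracy is in hand. The delicate part is the continuity of $\Psi$, and within it the confinement of $(w_q^*,y_q^*)$ away from the diagonal of $\cR$ in step (iii): this is exactly where strict positivity of the fixed cost $K$ is essential, since in the singular limit $K\to0$ of \sectref{sect-imp_sing} the optimal interval can collapse to a point and the present argument would break down.
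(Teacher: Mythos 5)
Your proof is correct and follows essentially the same route as the paper: characterize the optimizer by the first-order conditions \eqref{e-1st-order-condition}, then apply the implicit function theorem, with invertibility of the Jacobian supplied by $h'(w^*_{q_0};q_0)>0$ and $h'(y^*_{q_0};q_0)<0$ from \lemref{lem-h-new} (the paper works with the system $H=(h(w;q)-F,\,h(y;q)-F)$, whose Jacobian is diagonal, while your equivalent system $\Phi$ gives a non-diagonal but still invertible one). Your explicit compactness/continuity argument for $\Psi$ is a worthwhile addition: it justifies identifying the global maximizer with the local implicit function $\psi$, a step the paper simply asserts.
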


\begin{proof}
We consider the function $H: U_1 \times \cR \mapsto \R^{2}$ defined by 
\begin{displaymath}
H(q,w,y) = \begin{pmatrix} H_1(q,w,y) \\ H_2(q,w,y) \end{pmatrix} : = \begin{pmatrix} h(w;q) - F(q;w, y) \\ h(y;q) - F(q;w, y) \end{pmatrix}.
\end{displaymath} 
Note that $H$ is continuously differentiable with  $H(q_0, w^{*}_{q_0}, y^{*}_{q_0}) =0$ and, thanks to \eqref{e-1st-order-condition}, we have (for the Jacobian matrix in $(w,y)$) 
\begin{align*}
\mathscr J & (q_0,  w_{q_0}^{*}, y_{q_0}^{*}):  = \begin{pmatrix} \partial_{w} H_{1} & \partial_{y} H_{1} \\  \partial_{w} H_{2} & \partial_{y}F_{2} \end{pmatrix}(q_0, w^{*}_{q_0}, y^{*}_{q_0}) = \begin{pmatrix}
h'(w_{q_0}^{*};q_0) & 0 \\ 0 & h'(y_{q_0}^{*};q_0)
\end{pmatrix}.
\end{align*} 
In view of the monotonicity of $h(\cdot\,;q_0)$ derived in \lemref{lem-h-new}, we have $h'( w_{q_0}^{*};q_0 ) > 0$  and $h'( y_{q_0}^{*};q_0) < 0$.
Therefore ${\mathscr J}(q_0, w_{q_0}^{*}, y_{q_0}^{*})$ is an invertible matrix and hence we can apply the implicit function theorem to conclude that there exists an open neighborhood $U_2 \subset U_1$ containing  $q_0$ and a unique continuously differentiable function $\psi: U_2\mapsto \cR$ such that $\psi(q) = ( w_{q}^{*}, y_{q}^{*}) = \Psi(q)$ and $H_{1}(q,\psi(q)) =0, H_{2}(q,\psi(q)) =0$ for $q\in U_2$.  In particular, this gives the continuous differentiability of $\Psi$ at $q_0$ as desired. 
\end{proof}

Having established that the optimizers are differentiable in each parameter $q$, the Envelope Theorem implies that the value function $F^*(q) = F(q;w_q^*,y_q^*)$ and the supply rate expression $\mathfrak{z}^*_q  :=  (\frac{B \id}{B \xi})(w_q^*,y_q^*) $ are differentiable functions as well.  The next proposition summarizes the sensitivity to variation in the parameters of important quantities of the solution to the impulse control problem.  This summary is presented in table form wherein $\nearrow$ indicates that the quantity increases, while $\searrow$ represents a decrease, and {\em differentiation with respect to the parameter}\/ is denoted by a dot over the function.  Thus $h'(x;q) = \frac{dh}{dx}(x;q)$ while $\dot h(x;q) = \frac{dh}{dq}(x;q)$.  These sensitivity results vary one parameter while holding the remaining parameters fixed.

{  A mild condition on the drift $\mu$ is imposed to determine the sensitivity results with respect to the price parameter $p$.

\begin{cnd} \label{drift-cnd}
There exists $\wdh x_\mu \in \I$ such that $\mu$ is strictly increasing on the interval $(a, \wdh x_\mu)$ and is concave on $(\wdh x_\mu, b)$.
\end{cnd}
}

\begin{prop} \label{prop-sensitivity}
 Let $q \in \{ p, K, \gamma\}$.  Assume Conditions \ref{diff-cnd}, \ref{c-cond}, \ref{3.9-suff-cnd}, \ref{drift-cnd} hold and that $q$ is such that Condition~\ref{cond-interior-max} holds. Let the pair $(w_q^*,y_q^*)$ denote the maximizer of $F(q)$.  Then the following table exhibits the response of the quantities to an increase in the parameter.                                                                                                  
\begin{center}
\renewcommand{\arraystretch}{1.3}
\setlength{\tabcolsep}{10pt}

\begin{tabular}{|c||c|c|c|c|c|}
\hline
$q$ &  $F^*(q)$    &  $\dot F^*(q)$ &  $w_q^*$        & $y_q^*$      & $\mathfrak{z}^*_q$ \\  \hline \hline
$p$ & $\nearrow$ & $\nearrow$    & Indeterminate & $\searrow$ & $\nearrow$              \\ \hline
$K$ & $\searrow$ & $\nearrow$ & $\searrow$& $\nearrow$ & Indeterminate. \\ \hline
$\gamma$ & $\nearrow$  & $\nearrow$ & $\nearrow$  & $\nearrow$ & Indeterminate \\ \hline
\end{tabular}
\end{center}
\end{prop}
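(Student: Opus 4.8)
The plan is to derive every entry of the table from two observations: for each fixed $(w,y)\in\cR$ the map $q\mapsto F(w,y;q)$ is \emph{affine}, so $F^*(q)=\sup_{(w,y)\in\cR}F(w,y;q)$ is convex in $q$; and the maximizer $(w_q^*,y_q^*)$ is characterised by the first-order conditions $F^*(q)=h(w_q^*;q)=h(y_q^*;q)$ of \eqref{e-1st-order-condition}, which hold in this two-equation interior form because we restrict to models with $w_q^*>a$ (and $w_q^*>a$ for all $q$ near $q_0$ by the continuity of $\Psi$ from \lemref{lem1-prop41-pf}). Writing $\mathfrak z(w,y)=\tfrac{y-w}{\xi(y)-\xi(w)}$, $T(w,y)=\tfrac1{\xi(y)-\xi(w)}$, and $G_0(w,y)=\tfrac{g(y)-g(w)}{\xi(y)-\xi(w)}$, equation \eqref{e:F_K} reads $F=\gamma G_0+p\,\mathfrak z-K\,T$. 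By the Envelope Theorem together with \lemref{lem1-prop41-pf}, $\dot F^*(q)=\dot F(w_q^*,y_q^*;q)$, so $\dot F^*(p)=\mathfrak z(w_p^*,y_p^*)>0$, $\dot F^*(K)=-T(w_K^*,y_K^*)<0$, and $\dot F^*(\gamma)=G_0(w_\gamma^*,y_\gamma^*)>0$ (positivity holding since $w_\gamma^*>a$ and $g'>0$ on $\I$ by \cndref{c-cond}). This yields the $F^*(q)$ column at once, and the convexity of $F^*$ in $q$ yields the $\dot F^*(q)$ column (nondecreasing in each case; for $q=K$ this also follows from $\dot F^*(K)=-1/(\xi(y_K^*)-\xi(w_K^*))$ once the monotonicity of $w_K^*,y_K^*$ below is established).

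For the threshold columns, I would differentiate $h(w_q^*;q)=F^*(q)$ and $h(y_q^*;q)=F^*(q)$ in $q$ and use $\dot F(w_q^*,y_q^*;q)=\dot F^*(q)$ (Envelope Theorem) to get
\[
\dot w_q^*=\frac{\dot F^*(q)-\dot h(w_q^*;q)}{h'(w_q^*;q)},\qquad
\dot y_q^*=\frac{\dot F^*(q)-\dot h(y_q^*;q)}{h'(y_q^*;q)},
\]
with $h'(w_q^*;q)>0$ and $h'(y_q^*;q)<0$ by \lemref{lem-h-new} (cf.\ the proof of \lemref{lem1-prop41-pf}). Every claim about $w_q^*$ and $y_q^*$ thus reduces to the sign of the numerator. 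For $q=K$: $\dot h\equiv0$ ($h$ does not involve $K$), so the signs are those of $\dot F^*(K)<0$, giving $w_K^*\searrow$, $y_K^*\nearrow$. For $q=\gamma$: $\dot h(x;\gamma)=g'(x)/\xi'(x)=:h_0(x)$ is strictly increasing on $\I$ by \eqref{e:h'-expression} and \cndref{c-cond}, while the generalized mean value theorem gives $\dot F^*(\gamma)=G_0(w^*,y^*)=g'(\theta)/\xi'(\theta)=h_0(\theta)$ for some $\theta\in(w^*,y^*)$; hence $h_0(w^*)<\dot F^*(\gamma)<h_0(y^*)$, the numerator is positive at $w^*$ and negative at $y^*$, and with the signs of $h'$ we get $w_\gamma^*\nearrow$, $y_\gamma^*\nearrow$. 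For the supply-rate column in these two cases write $\mathfrak z^*_q=\tfrac{y_q^*-w_q^*}{\xi(y_q^*)-\xi(w_q^*)}$: the monotonicities just found make the numerator and, through the monotonicity of $\xi$, the denominator move in the same direction, so $\mathfrak z^*_K$ and $\mathfrak z^*_\gamma$ are indeterminate.

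The case $q=p$ is the delicate one and is, I expect, the main obstacle. Here $\dot h(x;p)=1/\xi'(x)=:h_1(x)$, which is \emph{not} monotone on $\I$ in general (its variation is governed by the sign of $\mu-h_1$), so the mean value theorem argument used for $\gamma$ breaks down. Instead I would combine the two first-order conditions with the affine decomposition of $F$ and $h=\gamma h_0+p\,h_1$: from $h(y^*;p)=F(w^*,y^*;p)$,
\[
p\bigl(h_1(y^*)-\mathfrak z(w^*,y^*)\bigr)=\gamma\bigl(G_0(w^*,y^*)-h_0(y^*)\bigr)-K\,T(w^*,y^*)<0,
\]
since $G_0(w^*,y^*)=h_0(\theta)<h_0(y^*)$ and $K,T>0$; therefore $h_1(y^*)<\mathfrak z(w^*,y^*)=\dot F^*(p)$, the numerator at $y^*$ is positive, and $y_p^*\searrow$. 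The same identity at $w^*$ reads $p\bigl(h_1(w^*)-\mathfrak z(w^*,y^*)\bigr)=\gamma\bigl(h_0(\theta)-h_0(w^*)\bigr)-K\,T(w^*,y^*)$, a difference of two positive quantities, so its sign---and hence that of $\dot w_p^*$---is indeterminate. Finally $\mathfrak z^*_p=\dot F^*(p)>0$, which is nondecreasing by convexity of $F^*$ in $p$, so $\mathfrak z^*_p\nearrow$. Beyond the Envelope Theorem, \lemref{lem1-prop41-pf}, and \lemref{lem-h-new}, the only genuine work is the fixed-cost identity displayed above, which is what forces $\dot y_p^*<0$ and simultaneously exposes the indeterminacy of $\dot w_p^*$.
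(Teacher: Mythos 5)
Your proposal is correct and reproduces every entry of the table, and its skeleton (Envelope Theorem plus implicit differentiation of the first-order conditions $h(w_q^*;q)=F^*(q)=h(y_q^*;q)$, with the signs of $h'(w_q^*)>0>h'(y_q^*)$ from \lemref{lem-h-new}) is the same as the paper's. Two of your steps are genuinely different and worth comparing. First, for the key inequality $h_\mu(y_p^*)<\mathfrak z^*_p$ that forces $\dot y_p^*<0$, the paper argues via the unimodality of $h_\mu$ (existence of the peak $\wdt x_\mu$, the relation $h_\mu(w^*_p)>h_\mu(y^*_p)$ extracted from the first-order condition and the monotonicity of $h_c$, and the location of the mean-value point $\theta_p$), whereas you obtain it directly from the identity $p\bigl(h_\mu(y^*)-\mathfrak z\bigr)=\gamma\bigl(h_c(\theta)-h_c(y^*)\bigr)-K\,T<0$; this is cleaner, bypasses the shape analysis of $h_\mu$ entirely, and simultaneously exposes why the sign at $w^*$ is a race between two positive terms and hence indeterminate (the paper reaches the same indeterminacy through a longer case analysis involving the conjugate point $\wdt w_p$). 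Second, you replace the paper's explicit computation of $\ddot F^*(p)>0$ by the observation that $F^*(q)$ is a supremum of functions affine in $q$ and is therefore convex. You also supply the $\gamma$ row in full, which the paper leaves to the reader.

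The one caveat concerns strictness in the $\dot F^*(q)$ and $\mathfrak z^*_p$ columns. Convexity of a supremum of affine functions yields only that $\dot F^*(p)=\mathfrak z^*_p$ is \emph{nondecreasing}, while the paper's computation gives $\ddot F^*(p)>0$ strictly (the strictly positive term being $\dot y^*_p\,\xi'(y^*_p)\bigl(h_\mu(y^*_p)-\mathfrak z^*_p\bigr)/B\xi(w^*_p,y^*_p)$, a product of two strict negatives). You already have the ingredients to close this: if $\mathfrak z^*_{p_1}=\mathfrak z^*_{p_2}$ for $p_1<p_2$, then $F^*$ is affine on $[p_1,p_2]$ and the supporting affine function $F(w^*_{p_1},y^*_{p_1};\cdot)$ attains the supremum throughout, so by the uniqueness of the maximizer (\propref{prop-sec3-uni+existence}) the pair $(w^*_p,y^*_p)$ is constant on $[p_1,p_2]$, contradicting your strict inequality $\dot y^*_p<0$. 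A parallel remark applies to the $\gamma$ row, where strictness of $h_c(\theta)-h_c(w^*)>0$ implicitly uses that $c$ is non-constant below the relevant points; the paper glosses over the same point. These are minor patches rather than gaps in the method.
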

\medskip

\begin{rem}
We highlight some implications of these results.    For the price parameter $p$, the fact that $ \dot F^*(p)$ is monotone increasing implies that the value function $F^*(p)$ is convex. Furthermore, $\dot F^*(p) = \mathfrak{z}^*_p$ by the Envelope Theorem, so the nonnegativity of $\dot {\mathfrak {z}} ^*_p$  reflects a fundamental postulate of micro-economics, namely that the supply function of producers is increasing in the market price.  That is, the higher the market price the more producers are willing to expand their production. 
\end{rem}

It will be helpful to break the function $h$ of \eqref{e-h-fn-defn} into the linear combination of two functions:
$$h(x;q) = h(x) = \frac{\gamma g'(x) + p}{\xi'(x)} =: \gamma h_c(x) + p h_\mu(x), \qquad x \in \I.$$
This notation is derived from the facts (see \eqref{g'-xi'}, and \eqref{e:1/s-identity} with \eqref{e:xi-derivatives}, respectively) that for each $x \in \I$, 
$$h_c(x) = \frac{g'(x)}{\xi'(x)} = \frac{\int_a^x c(u)\, m(u)\, du}{ M[a,x] } \quad \text{and}\quad h_\mu(x) = \frac{1}{\xi'(x)} = \frac{\int_a^x \mu(u)\, m(u)\, dx }{ M[a,x]}.$$
Observe that $h_c$ is monotone increasing and that   $h'  :=  (p h_\mu  +  \gamma h_c)'$ evaluated at $w_q^*$ is positive, while $h'(y_q^*) < 0$.    Considering $h_\mu$, note that 
\begin{equation} \label{h-mu-prime}
h'_\mu(x) = \frac{m(x)}{M[a,x]^2}\int_a^x (\mu(x)- \mu(y))dM(y).
\end{equation} 
{  Hence $h_\mu$ is positive and strictly increasing on $(a, \wdh x_\mu)$ thanks to Condition \ref{drift-cnd}.} On the other hand,  \eqref{e-sM-infty} implies that $\lim_{x\to b} h_\mu(x) = 0$.  {  Therefore, there exists some $\wdt x_\mu$ at which $h'_\mu(\wdt x_\mu) = 0$.  Since $\mu$ is strictly increasing on $(a, \wdh x_\mu)$, \eqref{h-mu-prime} implies $h_\mu' > 0$ on this interval.  For $h'(\wdt x_\mu) = 0$ to occur, there must exist some $\bar x_\mu$ with $\wdh x_\mu \le  \bar x_\mu < \wdt x_\mu$ at which $\bar x_\mu$ is the largest maximizer of $\mu$ and $\mu(x) < \mu(\bar x_\mu)$ on $(\bar x_\mu, \wdt x_\mu)$.  The concavity of $\mu$ on $(\wdh x_\mu, b)$ then implies that $h_\mu$ is strictly decreasing on $(\wdt x_\mu, b)$ while \eqref{h-mu-prime} also implies $h_\mu$ is strictly increasing on $(a, \wdt x_\mu)$.
}


The proofs of the results in \propref{prop-sensitivity} are based on a common idea: (i) use the first-order conditions \eqref{e-1st-order-condition} of \propref{prop-Fmax} in conjunction with uniqueness from \propref{prop-sec3-uni+existence}; (ii) employ the
  Envelope Theorem and evaluate the derivatives with respect to $q$ of the expression which depends on $w^*_q$ and on $y^*_q$, and of the expression which depends on $F^*(q)$; and (iii) carefully analyze the resulting expressions and equations to identify the sign of $(h_\mu(w^*_q)  -  \mathfrak{z}^*_q)$.  We now present the proof of \propref{prop-sensitivity}, beginning with the simplest case to analyze.
  
\begin{proof}[Proof of \propref{prop-sensitivity}]
\noindent
{\bf i.} Consider the fixed cost parameter $K$, allowing this to vary but keeping $p$ and $\gamma$ fixed.  It follows from \lemref{lem1-prop41-pf} that
$$F^*(K) :=  F(K;w_K^*,y_K^*)  = \left(\frac{p  B \id}{B\xi}  +  \frac{\gamma  B g}{B\xi}    -   \frac{K}{B\xi}  \right)(w_K^*,y_K^*)$$
is differentiable in $K$. Hence, by the Envelope Theorem
\begin{equation} \label{Fdot-K}
\dot F^*(K)   =    -  \frac{1}{B\xi}(w_K^*,y_K^*)
\end{equation}
and since $B\xi(w_K^*,y_K^*) > 0$,  $F^*(K)$ is strictly monotone decreasing. Intuitively, the result is obvious! Moreover, it follows from the first-order condition \eqref{e-1st-order-condition} that
$$\dot F^*(K)  =  h'(w_K^*)  \dot w_K^*  =  h'(y_K^*)  \dot y_K^*.$$
Since $h'(w_K^*)$ is positive and $h'(y_K^*)$  is negative, the results pertaining to $w_K^*$ and $y_K^*$ follow.  For the final definitive result, as $w_K^*$ and $y_K^*$ move apart $\dot F^*(K)$ increases in value.  Finally, $B\id(w^*_K,y^*_K) = y^*_K - w^*_K$ increases and so does $B\xi(w^*_K,y^*_K)$ but the change in the ratio $\mathfrak{z}(w^*_K,y^*_K) = \frac{B\id}{B\xi}(w^*_K,y^*_K)$ cannot be determined in general.  This now verifies all of the results pertaining to an increase in the fixed price $K$, as presented in the table.
\medskip

\noindent
{\bf ii.} Now consider varying the price parameter $p$ while holding $K$ and $\gamma$ fixed.  We first prove that $y_p^*$ decreases as $p$ increases.  Implementing the first two steps, (i) and (ii), of the common idea (taking the derivative with respect to $p$ in the first-order condition and using the Envelope Theorem) yields 
\begin{equation} \label{envelope-analysis}
  \frac{dh(y^*_p;p)}{dp} = h'(y^*_p)  \dot y^*_p  +  h_\mu(y^*_p)  =  \dot F^*(p) = \mathfrak{z}_p(w^*_p,y^*_p)  =  \frac{B \id}{B \xi}(w^*_p,y^*_p) = \mathfrak{z}^*_p.
 \end{equation}
  Next, to be specific about idea (iii), observe the following facts: 
  \begin{itemize}
  \item[(a)] $h_c$ is a strictly increasing function on $\I$,
  \item[(b)] the first-order condition gives 
$$     (p h_\mu  +  \gamma h_c)(w^*_p)  =  (p h_\mu  +  \gamma h_c)(y^*_p),   \quad \text{with }w^*_p  <  y^*_p, \quad p > 0, \gamma \ge 0, $$
and 
\item[(c)] there exists a point $\theta_p$, with $w^*_p < \theta_p < y^*_p$, such that 
$(B \id / B \xi)(w^*_p,y^*_p)  =  h_\mu(\theta_p)$.
\end{itemize} 

Using (a) in the first-order condition (b) establishes that  $h_\mu(w^*_p)   >   h_\mu(y^*_p)$.   Thus since $h_\mu$ is strictly increasing on $(a,\wdt x_{\mu})$ and strictly decreasing on $(\wdt x_{\mu},b)$, the relation $w^*_p < \theta_p < y^*_p$ implies $h_\mu(\theta_p)  >   h_\mu(y^*_p)$.  Now using the second and sixth expressions in \eqref{envelope-analysis} yields
\begin{equation} \label{y-dot}
\dot y^*_p = \frac{ \mathfrak{z}^*_p - h_\mu(y^*_p)}{h'(y^*_p)} = \frac{h_\mu(\theta_p) - h_\mu(y^*_p)}{h'(y^*_p)}.
\end{equation}
Since the numerator is positive and $h_\mu'(y^*_p) < 0$, the optimal jump-from position $y^*_p$ is decreasing in $p$.

We now examine $\dot F^*(p)$.  Following a similar argument as for the first equation in \eqref{y-dot}, it follows that
\begin{equation} \label{w-dot}
\dot w^*_p = \frac{ \mathfrak{z}^*_p - h_\mu(w^*_p)}{h'(w^*_p)}.
\end{equation}
Using the equality of the third and fifth expressions in \eqref{envelope-analysis}, taking derivatives with respect to $p$, noting that $h_\mu(x) = \frac{1}{\xi'(x)}$ in the fourth equality and using \eqref{w-dot} in the last equality yields
\begin{align*}
\ddot F^*(p) & = \frac{d}{dp}\left(\frac{B\id(w^*_p,y^*_p)}{B\xi(w^*_p,y^*_p)}\right) \\
& = \frac{B\xi(w^*_p,y^*_p) (\dot y^*_p - \dot w^*_p) - (y^*_p - w^*_p) (\xi'(y^*_p) \dot y^*_p - \xi'(w^*_p) \dot w^*_p)}{(B\xi(w^*_p,y^*_p))^2} \\
& = \frac{1}{B\xi(w^*_p,y^*_p)} \left[ (\dot y^*_p - \dot w^*_p) - \mathfrak{z}(w^*_p,y^*_p) (\xi'(y^*_p) \dot y^*_p - \xi'(w^*_p) \dot w^*_p)\right] \\
& = \frac{\xi'(y^*_p)}{B\xi(w^*_p,y^*_p)} \left[ \dot y^*_p \left( h_\mu(y^*_p) - \mathfrak{z}^*_p \right) \right] - \frac{\xi'(w^*_p)}{B\xi(w^*_p,y^*_p)} \left[ \dot w^*_p \left( h_\mu(w^*_p) - \mathfrak{z}^*_p \right) \right] \\
& = \frac{\xi'(y^*_p)}{B\xi(w^*_p,y^*_p)} \left[ \dot y^*_p \left( h_\mu(y^*_p) - \mathfrak{z}^*_p \right) \right] + \frac{\xi'(w^*_p)}{B\xi(w^*_p,y^*_p)} \left[ \frac{ \left( h_\mu(w^*_p) - \mathfrak{z}^*_p \right)^2}{h'(w^*_p)} \right].
\end{align*}
Evaluating the signs of these terms, both $\dot y^*_p$ and $h_\mu(y^*_p) - \mathfrak{z}^*_p$ are negative while both $\xi'(y^*_p)$ and $B\xi(w^*_p,y^*_p)$ are positive so the first summand is positive.  Similarly, all terms in the second summand are positive which establishes that $\ddot F^*(p) > 0$.  This immediately shows that $\dot F^*_p$ is increasing and by \eqref{envelope-analysis} that $\mathfrak{z}^*_p$ is increasing.  Moreover since $\mathfrak{z}^*_p > 0$, it again follows from \eqref{envelope-analysis} that $F^*_p$ is increasing, establishing all of the definitive results in the table pertaining to sensitivity with respect to $p$.  

We now show that the sensitivity of $w^*_p$ to an increase in $p$ cannot be determined in general.  In \eqref{w-dot}, $h'(w^*_p) > 0$ so the sign of $\dot w^*_p$ is the same as the sign of $\mathfrak{z}^*_p - h_\mu(w^*_p) = h_\mu(\theta_p) - h_\mu(w^*_p)$, where $\theta_p$ is given by observation (c).  It is not obvious whether the last expression is positive or negative.  We examine two cases.
\medskip

\noindent 
{(\bf a)} {\em Suppose $w^*_p \geq \wdt x_{\mu}$.}\/  Since $h_\mu$ is strictly decreasing on $(\wdt x_{\mu}, b)$ and $\theta_p > w^*_p$, it follows that $h_\mu(\theta_p) - h_\mu(w^*_p) < 0$ and hence that $\dot w^*_p < 0$.
\medskip

\noindent
{(\bf b)} {\em Suppose $w^*_p < \wdt x_{\mu}$.}\/  This analysis compares $h_\mu$ on $[w^*_p,b)$ to the function $y \mapsto \mathfrak{z}(w^*_p,y) = \frac{B\id}{B\xi}(w^*_p,y)$ for $y > w^*_p$.  Several subcases need to be examined.
First, applying the extended mean value theorem, for each $y > w^*_p$, there is some $\theta_y$ with $w^*_p < \theta_y < y$ for which $\mathfrak{z}(w^*_p,y) = \frac{1}{\xi'(\theta_y)} = h_\mu(\theta_y)$.  Extending $\mathfrak{z}(w^*_p,y)$ by continuity to $y=w^*_p$, we have $\mathfrak{z}(w^*_p,w^*_p) = h_\mu(w^*_p)$.  
In addition, on the interval $(w^*_p, \wdt x_{\mu})$, $h_\mu$ is increasing so since $w^*_p < \theta_y$, it follows that $h_\mu(w^*_p) < \mathfrak{z}(w^*_p,y)$.  {\em If $\theta_p \in (w^*_p,\wdt x_{\mu})$}, then 
$$\mathfrak{z}^*_p = \mathfrak{z}(w^*_p,y^*_p) = h_\mu(\theta_p) > h_\mu(w^*_p)$$ 
and the expression $\dot w^*_p$ of \eqref{w-dot} is positive.  

However, it may occur that $\theta_p \geq \wdt x_{\mu}$.  So more analysis is required.  On the interval $(\wdt x_{\mu}, b)$, the function $h_\mu$ is decreasing and converges to $0$ as $y \to b$.  There is therefore some conjugate value $\wdt w_p < b$ for which 
\begin{equation} \label{tilde-w_p}
h_\mu(\wdt w_p) = h_\mu(w^*_p).
\end{equation}

Consider the case in which $\wdt x_{\mu} < \theta_p < \wdt w_p$.  Then 
$$h_\mu(w^*_p) < h_\mu(\theta_p) = \mathfrak{z}^*_p$$ 
and the expression for $\dot w^*_p$ is again positive.

But if $\theta_p > \wdt w_p$, then 
$$\mathfrak{z}^*_p = h_\mu(\theta_p) < h_\mu(\wdt w_p) = h_\mu(w^*_p)$$
and the expression for $\dot w^*_p$ is negative.

As this analysis demonstrates, the existence of $\theta_p$ from an application of the mean value theorem does not provide sufficient information on its location and hence the effect on $w^*_p$ of an increase in $p$ can only be determined using this analysis on a case by case basis.

The sensitivity analysis for the scaling parameter $\gamma$ follows a similar approach and is left to the reader.
\end{proof}

\begin{rem}
For models in which there are no subsidies, that is when $\gamma = 0$, the effects on $w^*_p$ and $\mathfrak{z}^*_K$ to increases in the parameters $p$ and $K$, respectively, can be resolved.  For example, the function $h$ of \eqref{e-h-fn-defn} is the same as $h_\mu$.  As a result, the value $\wdt w_p$ that satisfies \eqref{tilde-w_p} is given by $\wdt w_p = y^*_p$ and since $\theta_p \in (w^*_p,y^*_p)$, it follows that $\mathfrak{z}^*_p = h_\mu(\theta_p) > h_\mu(w^*_p)$.  Therefore, $\dot w^*_p > 0$ and $w^*_p$ increases as $p$ increases.

Turning to the sensitivity of $\mathfrak{z}^*_K$ to an increase in $K$, we analyze ${   \dot{\mathfrak{z}}^*_K}$ as follows.
\begin{align*}
\dot{\mathfrak{z}}^*_K & = \frac{d}{dK}\left(\frac{B\id(w^*_K,y^*_K)}{B\xi(w^*_K,y^*_K)}\right) \\
& = \frac{B\xi(w^*_K,y^*_K) ({\dot y}^*_K - {\dot w}^*_K) - B\id(w^*_K,y^*_K) (\xi'(y^*_k) {\dot y}^*_K - \xi'(w^*_K) {\dot w}^*_K }{(B\xi(w^*_K,y^*_K))^2} \\
& = \frac{1}{B\xi(w^*_K,y^*_K)} \left[ \dot y^*_K  (1 - \mathfrak{z}(w^*_K,y^*_K) \xi'(y^*_K) - \dot w^*_K (1 - \mathfrak{z}(w^*_K,y^*_K) \xi'(w^*_K) \right] \\
& = \frac{1}{B\xi(w^*_K,y^*_K)} \left[ \dot y^*_K  \xi'(y^*_K) \left(h_\mu(y^*_K) - h_\mu(\theta_p)\right) - \dot w^*_K \xi'(w^*_K) \left(h_\mu(w^*_K) - h_\mu(\theta_p)\right)  \right] ;
\end{align*}
the last equality utilizes the fact that $h_\mu(x) = \frac{1}{\xi'(x)}$ for every $x \in \I$ and from (c) above that $\mathfrak{z}_K(w^*_K,y^*_K) = h_\mu(\theta_p)$ since $h = h_\mu$ when $\gamma=0$.  Observe that $B\xi(w^*_K,y^*_K) > 0$, $\dot y^*_K > 0$, and $\xi'(x) > 0$ for all $x \in \I$, as well as $\dot w^*_K < 0$.  Again, since $h_\mu$ is strictly increasing on $(w^*_K,\wdt x_\mu)$ and is strictly decreasing on $(\wdt x_\mu, y^*_K)$, the facts that $w^*_K < \theta_p < y^*_K$ and $h_\mu(w^*_K) = h_\mu(y^*_K)$ by the first order condition, imply $h_\mu(y^*_K) - h_\mu(\theta_p) = h_\mu(w^*_K) - h_\mu(\theta_p) < 0$ and hence that $\dot{\mathfrak{z}}^*_K < 0$.  Thus for the special model without subsidies, higher fixed costs will increase the production volume and extend the (average) length of a production cycle in such a manner that the supply rate is reduced.
\end{rem}


\section{Impulse and Singular Control Connections}\label{sect-imp_sing}

This section explores some of the connections between the impulse and singular control problems.  \remref{impulse-singular-connection} already observed that the first-order conditions \eqref{e-1st-order-condition} for the impulse control problem relates the optimal value $F^*$ to the sub-optimal singular payoffs for reflection policies at the optimizers $w^* > a$ and $y^*$ for the impulse problem.  We now prove that the solutions to the impulse problems, parametrized by the fixed cost $K$, converge to the solution of the singular control problem as $K \to 0$.  The parameters $p$ and $\gamma$ are held constant throughout.

Interpreting \propref{prop-sensitivity} when $K$ decreases, both the continuation region $(a, w^*_{K})$   and  the intervention region $(y^*_{K}, b)$ expand. The next proposition highlights the intrinsic connection between the impulse control problem \eqref{e:reward-fn} and the singular control problem \eqref{e:singular-reward},  and formalizes the intuition that the latter is  the limiting case of the former when the fixed cost tends to zero. 

 \begin{prop}  \label{prop-K-to-0} 
 Assume Conditions \ref{diff-cnd}, \ref{c-cond},  \ref{3.9-suff-cnd}, and \ref{cond-interior-max} hold. Then 
 \begin{itemize}
  \item[{\em (a)}] As $K\downarrow 0$, the optimal impulse control value $F^*_K$ for \eqref{e:reward-fn} converges to the optimal singular control value $h(\wdh y)$ for \eqref{e:singular-reward}, in which $\wdh y$ is defined in \eqref{e-y-hat-p-defn}. 
\item[{\em(b)}] Denote by $(w^*_K,y^*_K)$ the unique maximizing pair for the function  
\begin{equation} \label{F-K-defn}
F(K) :=  F(K;w,y)  = \left (\frac{p  B \id}{B\xi}  +  \frac{\gamma  B g}{B\xi}    -   \frac{K}{B\xi}  \right)(w,y), \qquad (w,y) \in \cR.
\end{equation}
As $K\downarrow 0$, the optimal $(w^*_K,y^*_K)$-impulse control policy for \eqref{e:reward-fn} converges to the optimal $Z_{\wdh y}$ singular control policy for \eqref{e:singular-reward},
 and the invariant measure induced by the $(w^*_K,y^*_K)$-policy converges weakly to that induced by the $Z_{\wdh y}$ policy.
 \end{itemize} 
 \end{prop}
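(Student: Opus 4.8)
The plan is to sandwich the optimal impulse value $F^*_K := F(K;w^*_K,y^*_K)$ between the common upper bound $h(\wdh y)$ and a lower bound produced by a carefully chosen family of shrinking-band $(w,y)$-policies; this gives part (a), and part (b) then follows from the first-order conditions \eqref{e-1st-order-condition}, the strict unimodality of $h$ in \lemref{lem-h-new}, and the explicit invariant density \eqref{e:nu_density-wy}.

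For part (a), the upper bound is immediate: by \eqref{e:F<ell} and \lemref{lem-h-new}, $F^*_K < \sup_{x\in\I}h(x) = h(\wdh y)$ for every $K>0$, and since $K\mapsto F(K;w,y)$ is decreasing for each fixed $(w,y)\in\cR$, the map $K\mapsto F^*_K$ is nonincreasing, so $F^*_0:=\lim_{K\downarrow 0}F^*_K$ exists with $F^*_0\le h(\wdh y)$. For the matching lower bound I would test $F(K;\cdot,\cdot)$ against the band $(w_K,y_K):=(\wdh y-\sqrt K,\,\wdh y+\sqrt K)$, which lies in $\cR$ once $K<(\wdh y-a)^2$. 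A Taylor expansion of $\id$, $g$, $\xi$ about $\wdh y$ gives $B\id(w_K,y_K)=2\sqrt K$, $Bg(w_K,y_K)=2g'(\wdh y)\sqrt K+O(K^{3/2})$, $B\xi(w_K,y_K)=2\xi'(\wdh y)\sqrt K+O(K^{3/2})$, so that by \eqref{F-K-defn}
\[
F(K;w_K,y_K)=\frac{2\bigl(p+\gamma g'(\wdh y)\bigr)\sqrt K-K+O(K^{3/2})}{2\xi'(\wdh y)\sqrt K+O(K^{3/2})}\longrightarrow \frac{p+\gamma g'(\wdh y)}{\xi'(\wdh y)}=h(\wdh y)\quad\text{as }K\downarrow 0,
\]
the point being that the per-cycle fixed-cost term $K/B\xi$ is of order $\sqrt K$ and vanishes. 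Since $F^*_K\ge F(K;w_K,y_K)$, this forces $F^*_0=h(\wdh y)$, which by \propref{prop-singular-optimal} (available because \cndref{cond-interior-max} implies \cndref{cond-hp-int-max}) is precisely the optimal singular value.

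For part (b) I would first show $w^*_K>a$ for all sufficiently small $K$: if not, the alternative first-order condition \eqref{e2-1st-order-condition} would force $h(a)\ge F^*_K$, impossible once $F^*_K$ is close to $h(\wdh y)$, since $h$ extends continuously to $a$ with $h(a)=r(a)<h(\wdh y)$ by the strict monotonicity of $h$ on $(a,\wdh y)$. Hence for small $K$ the interior condition \eqref{e-1st-order-condition} applies, $F^*_K=h(w^*_K)=h(y^*_K)$, and strict unimodality of $h$ (peak at $\wdh y$) gives $w^*_K<\wdh y<y^*_K$. Letting $K\downarrow 0$, continuity of $h$ on $[a,b)$ together with $F^*_K\to h(\wdh y)=\max_{\I}h$ shows every limit point $w^\dagger$ of $\{w^*_K\}\subset(a,\wdh y)$ satisfies $h(w^\dagger)=h(\wdh y)$, hence $w^\dagger=\wdh y$; since $(a,\wdh y)$ is bounded this gives $w^*_K\to\wdh y$, and the same argument on the continuous strictly decreasing branch $h|_{(\wdh y,b)}$ (which also covers $b=\infty$) gives $y^*_K\to\wdh y$. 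Alternatively one may invoke the monotonicity of $w^*_K$ and $y^*_K$ in $K$ from \propref{prop-sensitivity}. Thus $(w^*_K,y^*_K)\to(\wdh y,\wdh y)$, i.e.\ the $(s,S)$-band collapses onto the reflecting barrier at $\wdh y$ — the $Z_{\wdh y}$ policy. For the invariant measures, write $\varrho_K=(\xi(y^*_K)-\xi(w^*_K))^{-1}$ as in \eqref{e:nu_density-wy}; using $\xi(y)-\xi(w)=\int_w^y M[a,v]\,dS(v)$ and continuity of $v\mapsto M[a,v]$ one gets $\varrho_K S[w^*_K,y^*_K]=S[w^*_K,y^*_K]\big/\int_{w^*_K}^{y^*_K}M[a,v]\,dS(v)\to 1/M[a,\wdh y]$. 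Hence $\nu(x;w^*_K,y^*_K)=\varrho_K S[w^*_K,y^*_K]\,m(x)\to m(x)/M[a,\wdh y]$ for $x$ below the band, the transitional band $(w^*_K,y^*_K]$ carries mass at most $\varrho_K S[w^*_K,y^*_K]\int_{w^*_K}^{y^*_K}m(u)\,du\to 0$, and $\nu\equiv 0$ above $y^*_K$; so $\nu(\cdot;w^*_K,y^*_K)\to\pi_{\wdh y}$ of \eqref{e:singular-inv-meas} pointwise a.e., and since these are all probability densities, Scheff\'e's lemma upgrades this to convergence in total variation, a fortiori weak convergence.

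The hard part is the lower bound in part (a): one must guess the right near-optimal family — a band centered at $\wdh y$ with half-width of order $\sqrt K$ — so that the vanishing per-cycle fixed cost and the vanishing cycle length balance at compatible rates; any faster shrinkage makes $K/B\xi\to\infty$ and kills the payoff. Once that calibration is found, the remaining steps are essentially bookkeeping: the upper bound is a one-line consequence of \eqref{e:F<ell}, the convergence of the optimizers is soft (continuity plus strict unimodality of $h$, or \propref{prop-sensitivity}), and the invariant-measure limit is a direct computation with the closed-form densities, closed off by Scheff\'e. The two minor subtleties to handle with care are the exclusion of the case $w^*=a$ for small $K$ before invoking \eqref{e-1st-order-condition}, and, when $b=\infty$, arguing $y^*_K\to\wdh y$ via the monotone branch of $h$ rather than by compactness.
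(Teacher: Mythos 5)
Your proposal is correct and, apart from one sub-step, follows the same route as the paper: upper bound from \eqref{e:F<ell}, a near-optimal $(w,y)$-test policy for the lower bound, then part (b) from the first-order conditions \eqref{e-1st-order-condition} together with the strict unimodality of $h$ and the explicit densities \eqref{e:nu_density-wy}, \eqref{e:singular-inv-meas}. The one genuine difference is the lower bound in (a): you use a band $(\wdh y-\sqrt K,\wdh y+\sqrt K)$ that shrinks with $K$ and calibrate rates via Taylor expansion, whereas the paper fixes a band first — given $\e>0$ it picks $w_\e<\wdh y<y_\e$ with $h(w_\e)=h(y_\e)=h(\wdh y)-\e/2$, sets $K_\e=\tfrac{\e}{2}B\xi(w_\e,y_\e)$, and observes via the generalized mean value theorem that $F(K;w_\e,y_\e)>h(\wdh y)-\e$ for all $K<K_\e$. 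The paper's version avoids the rate calibration you describe as ``the hard part'' and needs no expansion at all; yours makes the mechanism (fixed cost per cycle of order $\sqrt K$ against cycle length of order $\sqrt K$) quantitatively visible, which is a nice complement to \remref{rem-comparison-singular}. One small imprecision: since $g$ and $\xi$ are only guaranteed $C^2$, the symmetric differences are $2g'(\wdh y)\sqrt K+O(K)$ rather than $O(K^{3/2})$; this is harmless because $O(K)=o(\sqrt K)$ still kills the error terms in the limit. Your explicit exclusion of $w^*_K=a$ via \eqref{e2-1st-order-condition} and $h(a)=r(a)<h(\wdh y)$, your treatment of $y^*_K\to\wdh y$ on the unbounded branch, and your Scheff\'e upgrade to total-variation convergence are all finer-grained than the paper's statements but consistent with them.
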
 
 
\begin{proof}
We first show that as $K\downarrow 0$, $F^*_K \uparrow h(\wdh y)$. Since $F_K^*$ and $h(\wdh y)$ are respectively the optimal impulse control value for \eqref{e:reward-fn} and the optimal singular control value for \eqref{e:singular-reward}, this convergence gives assertion (a) of the proposition.  For any $\e > 0$, since $h$ is continuous and achieves its maximum value at the unique maximizer $\wdh y$, we can find $w_\e < \wdh y < y_\e$ such that $h(w_\e) = h(y_\e) = h(\wdh y) -\frac\e2$. Note that $h(x) > h(\wdh y) -\frac\e2 $ for all $x\in (w_\e, y_\e)$. Let $K_\e: = \frac\e2 B\xi(w_\e, y_\e) $. Since $\xi$ is strictly increasing,  we have $K_\e > 0$. Then, using the calculations in \eqref{e:F<ell}, we can write 
\begin{align*} 
\frac{\gamma Bg(w_\e, y_\e) + pB\id(w_\e, y_\e)}{B\xi(w_\e, y_\e)} = \frac{\gamma g'(\theta)+ p}{\xi'(\theta)} = h(\theta) > h(\wdh y) -\frac\e2,  
\end{align*} 
where $\theta \in (w_\e, y_\e)$. This together with the definition of $F_{K_\e}$ in \eqref{F-K-defn} and \propref{prop-sensitivity} imply that for any $0 < K < K_\e$, we have
   \begin{align*}
   F_K^* > F_{K_\e}^* \ge F_{K_\e}(w_\e, y_\e) =  \frac{Bg(w_\e, y_\e) + pB\id(w_\e, y_\e)}{B\xi(w_\e, y_\e)}- \frac{K}{B\xi(w_\e, y_\e)} > h(\wdh y) -\e. 
   \end{align*} 
On the other hand, $F_K^* < h(\wdh y)$ for all $K>0$ in view of \eqref{e:F<ell}.     Thus  the limit  $\lim_{K\downarrow 0} F_K^* = h(\wdh y)$ is established. This limit also implies that $w^*_K > a$ for all $K>0$ sufficiently small.

Now letting $K \downarrow 0$ in \eqref{e-1st-order-condition}, we have 
$$\lim_{K\downarrow 0} h(w_K^*) = \lim_{K\downarrow 0} h(y_K^*)= \lim_{K\downarrow 0} F_K^* = h(\wdh y).$$ 
This, in turn, implies that   both $w^*_{K}$ and $y^*_{K}$ converge to $\wdh y$  as $K\downarrow 0$. Then  it follows from Propositions \ref{prop-sec3-uni+existence} and \ref{prop-singular-optimal} that    the optimal $(w_K^*,y_K^*)$-impulse control policy converges to the optimal $Z_{\wdh y}$-singular control policy as $K\downarrow 0$. 
  Furthermore,  the convergence of $w^*_{K}$ and $y^*_{K}$  to $\wdh y$ implies that $\lim_{K\downarrow 0}\frac{S[w^*_{K}, y^*_{K}]}{B\xi(w^*_{K}, y^*_{K})} = \frac{1}{M[a, y_p]}$. Combined  with   \eqref{e:nu_density-wy} and \eqref{e:singular-inv-meas}, this shows that the invariant measure induced by the $(w_K^*,y_K^*)$-policy converges weakly to that induced by the $Z_{\wdh y}$-reflection policy. 
  \end{proof}

\begin{rem}\label{rem-comparison-singular} 
We observed in \eqref{e:Jp(Z_x)=hp} that $h(x)$ is the long-term average reward of the $Z_x$ policy for any $x\in \I$.   In particular, $h(\wdh y)$ is the optimal long-term average reward  for the singular control problem  \eqref{e:singular-reward}. On the other hand,  Proposition \ref{prop-Fmax}  shows that for any fixed cost $K>0$, the optimal impulse control value for \eqref{e:reward-fn} satisfies $F^*_K = h(y^*_K) \le h(w^*_K)$, and the $(w^*_K, y^*_K)$-policy is an optimal impulse control policy for \eqref{e:reward-fn}, where from \propref{prop-sec3-uni+existence}, $(w^*_K, y^*_K)$ is the unique maximizing pair for the function $F(K)$. 
  
Therefore,the optimal impulse control value for \eqref{e:reward-fn} equals the  long-term average reward of the $Z_{y^*_K}$ policy (and the $Z_{w^*_K}$ policy if $w_K^* > a$) for the singular control problem  \eqref{e:singular-reward}. 
Both $Z_{w^*_K}$ and $Z_{y^*_K}$ policies are suboptimal for \eqref{e:singular-reward}. The difference $h(\wdh y) - h(y^*_K)$ between the optimal singular control value and the optimal  impulse control value  measures the reduction in value due to the  presence of the positive fixed cost $K$. 

 Exploring this a little further, using \eqref{e-1st-order-condition} and the definition of $h$ in \eqref{e-h-fn-defn}, an application of the extended mean value theorem yields 
\begin{align*}
  h( y_K^*) &  = F(w_K^*,y_K^*) = \frac{\gamma Bg(w_K^*,y_K^*) + p B\id(w_K^*,y_K^*) - K}{B\xi(w_K^*,y_K^*)} \\ & = \frac{\gamma g'(\theta_1) + p}{\xi'(\theta_1)} - \frac{K}{B\xi(w_K^*,y_K^*)}=h(\theta_1) - \frac{K}{B\xi(w_K^*,y_K^*)} 
\end{align*}
in which $w_K^* < \theta_1 < y_K^*$.  Hence $\frac{K}{B\xi(w_K^*,y_K^*)} = h(\theta_1) - h(y_K^*)$.  This implies that the inclusion of a positive fixed cost $K$ in the model creates a difference between the singular and impulse optimal values of 
$$\hat J(\wdh y) - F_K^* = h(\wdh y) - h(\theta_1) + h(\theta_1) - h(y_K^*) = ( h(\wdh y) - h(\theta_1) ) + \frac{K}{B\xi(w_K^*,y_K^*)}.$$
The difference $h(\wdh y) - h(\theta_1)$ represents an additional reduction in value beyond the rate at which the fixed cost would be continually assessed.  This reduction arises from using impulse policies, which form a subclass of the singular policies.  The difference $\hat J(\wdh y) - F_K^*$ can thus be interpreted to be a penalty that is incurred in models having the positive fixed cost $K$.  
This intrinsic and profound connection between impulse and singular controls has, to the best of our knowledge, not been explored in the existing literature.
\end{rem}

\begin{rem} \label{quantifying-fixed-cost}
Now consider the singular control problem from an operational viewpoint.  When there is no positive fixed cost, the optimal singular control policy is $Z_{\wdh y}$ with value $h(\wdh y)$.  Implementing such a  reflection policy, however, is impossible in many applications.  The upshot of \propref{prop-K-to-0} is that we can approximate the optimal reflection policy $Z_{\wdh y}$ by an $\epsilon$-optimal $(\wdt w,\wdt y)$-policy.  To see this, select $\wdt y > \wdh y$ with $h(\wdt y) > h(a)$ and such that $h(\wdh y) - h(\wdt y) < \eps$.  Let $\wdt w < \wdh y$ be the conjugate value defined by requiring $h(\wdt w) = h(\wdt y)$ and note that $\wdt w > a$.  Setting $\wdt F = h(\wdt y)$, we can solve for the constant $\wdt K$ such that
$$\wdt F = \frac{\gamma Bg(\wdt w,\wdt y) + p B\id(\wdt w,\wdt y) - \wdt K}{B\xi(\wdt w,\wdt y)}$$
yielding $\wdt K = \gamma Bg(\wdt w,\wdt y) + p B\id(\wdt w, \wdt y) - \wdt F B\xi(\wdt w,\wdt y)$.  Note that $\wdt K > 0$ since $\wdt F < h(\wdh y)$.  It then follows from \eqref{e:F_K} that for this  fixed cost $\wdt K$, the $(\wdt w,\wdt y)$-impulse policy  has a long-term average reward  $\wdt F = h(\wdt y) > h(\wdh y) - \eps$.  
 Moreover, \remref{rem-comparison-singular} shows that the difference $h(\wdh y) - h(\wdt y)$ includes the fixed cost rate $\wdt K/B\xi(\wdt w,\wdt y)$ when the fixed cost $\wdt K$ is charged for each intervention.
  For the singular control problem, however, there is no fixed cost so the $(\wdt w,\wdt y)$-policy will not incur this charge, increasing the value.  Hence the corresponding value $\wdh J(R^{(\wdt w,\wdt y)})$ also satisfies $\wdh J(R^{(\wdt w,\wdt y)}) > h(\wdh y) - \eps$.  The ``chattering'' policy $R^{(\wdt w,\wdt y)}$ is an implementable $\eps$-optimal policy for the singular control problem.
\end{rem}

\section{Conclusions and Remarks}\label{sect-conclusion} 
This paper presents a comprehensive analysis of a class of long-term average impulse control problems  having two sources of income, together with their closely related singular control formulations. Working within a general one-dimensional diffusion framework with boundary conditions motivated by applications, we derive explicit solutions and characterize the optimal controls for both problems.  Our findings rigorously establish an intrinsic connection between the two control formulations, with the singular control solution emerging as a natural limit of the impulse control problem as fixed costs vanish.   It also identifies the penalty incurred on the optimal impulse value (relative to the optimal singular value) by the presence of the positive fixed cost $K$.  This penalty includes a reduction due to the restriction to the subclass of impulse controls along with the long-term average reduction arising directly from the fixed costs.  

 The paper conducts a detailed sensitivity analysis of the impulse control solution with respect to key parameters, providing practical guidance for applications, for example by identifying parameters that require particularly accurate estimation. The related long-term total reward criterion and its connection to overtaking optimality are also explored.  The explicit nature of the solutions makes the results directly applicable to canonical problems in areas such as renewable resource  and portfolio management, where decision makers must balance continuous subsidies against the income from interventions. 

The explicit solutions obtained here provide a solid foundation for the study of long-term average impulse control with mean field interactions in the companion paper \cite{HelmSZ:25}.

Several avenues for future research remain. Some of the technical conditions and assumptions imposed in this paper may be relaxed. Moreover, the framework could be extended to more general state processes, including those with McKean-Vlasov dynamics, to risk-sensitive objective functions, or to an exploration of the connections between long-term average, discounted and finite-time horizon problems, homogenization theory, and turnpike properties in the context of impulse and singular controls.

 \appendix
 \section{Technical Proofs}
 \label{Appen-proofs-sect2}
 
  \subsection{Proofs of Section  \ref{sect-formulation}}  \label{appen-pfs-sec2}
 \begin{proof}[Proof of Proposition \ref{lem-mu-at-b}] 
(i) Since $a$ is a non-attracting point by Condition \ref{diff-cnd}, $S(a, y] = \infty$ for any $y\in \I$; see  Table 6.2 on p.~234  of \cite{KarlinT81}. This, in turn, implies that $s(a+) =\infty $. Since $  \sigma^{2}(x) > 0$ for  $x\in \I$, the fact that $s(a+) =\infty $ also implies  that $\mu(a) \ge 0$.  Moreover, we can apply L'H\^opital's rule to derive 
\begin{equation}\label{e:xi':a}
\lim_{x\downarrow a} s(x) M(a, x] = \lim_{x\downarrow a} \frac{M(a, x] }{\frac{1}{s(x)}} = \lim_{x\downarrow a} \frac{m(x)}{\mu(x) \frac{2}{s(x) \sigma^{2}(x)}} = \frac1{\mu(a)}.
\end{equation} In particular, if $\mu(a) > 0$, then   for any $y \in \I$, we have \begin{displaymath}
  \int_{a}^{y} s(x) M(a,x] dx =  \int_{a}^{y}   M(a,x] dS(x) < \infty.
\end{displaymath} This together with $S(a, y] = \infty$ imply  that $a$ is an entrance  boundary in view of Table 6.2 on p.~234 of \cite{KarlinT81}. 

(ii) Since  $\frac{d}{du} \big(\frac1{s(u)} \big)  = \frac{2\mu(u)}{ s(u) \sigma^{2}(u)} = \mu(u) m(u),$     we have 
\begin{displaymath}
\int_{l}^{x} \mu(u) m(u)\, du =  \int_{l}^{x} \mu(u)\,  dM (u) = \frac1{s(x)}- \frac1{s(l)}, \quad \forall a <l < x<b.
\end{displaymath} 
Therefore  letting $l \downarrow a$, the bounded convergence theorem gives \eqref{e:1/s-identity}.

Turning now to the proofs of (iii) and (iv), we first note that  for each $x \in \I$, dividing \eqref{e:1/s-identity} by $M[a,x]$ and using \eqref{e-sM-infty} yields
\begin{equation} \label{limit=0}
\ell(x) := \frac1{M[a,x]}\int_a^x \mu(u)\, {dM(u)} = \frac{1}{s(x) M[a,x]} \stackrel{x\to b}{\longrightarrow} 0.
\end{equation} In view of the expression for $\xi' $ in \eqref{e:xi-derivatives}, we have $\ell(x) = \frac1{\xi'(x)}$ for any $x\in \I$.
We show that $\ell(b)$ takes value $\langle \mu,\pi \rangle$ in (iii) and $\mu(b)$ in (iv).

  (iii)   Consider the case   when $M[a,b] < \infty$.   Since $\mu$ is continuous on $\I$ and extends continuously to the boundary points by Condition \ref{diff-cnd}(c),  $\mu(b)$ can be either $ -\infty$, $\infty$, or finite. Let us first consider the case when $\mu(b) = -\infty$.  
  Then there exists a $y_{0} \in \I$ so that $\mu(y) < 0$ and $M[a, y] > \frac12 M[a, b]$ for all $y\in (y_{0} , b)$. 
  Recall we have observed that $\mu(a) \ge 0$ and since $\mu$ is continuous, $\mu$ is bounded on $[a, y_{0}]$. 
  Now for any $x\in [y_{0}, b)$, we can write 
\begin{align} \label{e:mu-dM} 
\ell(x) & = \frac{M[a, b]}{M[a, x]} \left(\int_{a}^{y_{0}} \mu(y) d\pi(y) + \int_{y_{0}}^{b}  \mu(y)I_{\{y\in [y_{0} ,x] \}} d\pi(y) \right).
\end{align} 
We have    $\lim_{x\to b}\frac{M[a,b]}{M[a,x]} = 1$ and, thanks to the Monotone Convergence Theorem, $$\lim_{x\to b}  \int_{y_{0}}^{b}  \mu(y)I_{\{y\in [y_{0} ,x] \}} d\pi(y) =  \int_{y_{0}}^{b}  \mu(y)  d\pi(y) .$$ Note also that $\int_{a}^{y_{0}} \mu(y) d\pi(y) $ is bounded.  
Using these observations in \eqref{e:mu-dM}, we have  
\begin{displaymath}
\lim_{x\to b} \ell(x) =  \int_{a}^{y_{K}} \mu(y)\, d\pi(y) + \int_{y_{K}}^{b}  \mu(y)\, d\pi(y) = \int_{a}^{b}  \mu(y)\, d\pi(y)= \lan\mu,\pi\ran,
\end{displaymath} 
which, together with \eqref{limit=0}, establish   assertion (iii) of the proposition  when $\mu(b) = -\infty$. The proof for the case when  $\mu(b) = \infty$ is similar. 

When $\mu(b)$ is finite, $\mu$ is bounded on $[a,b]$ and $\lim_{x \to b} \mu(y) I_{\{y\in [a,x]\}} = \mu(y)$ for all $y \in [a,b]$.  The bounded convergence theorem then implies
$$0= \lim_{x\to b} \ell(x) =\lim_{x\to b} \frac{M[a, b]}{M[a, x]} \int_a^b \mu(y) I_{\{y \in [a,x]\}}\, \pi(dy) = \int_a^b \mu(y)\,\pi(dy).$$  
(Defining the probability measures $\pi_x$ for $x \in (a,b)$ by $\pi_x(du) = I_{\{u \in [a,x]\}}\, \frac{dM(u)}{M[a,x]}$, this argument for an arbitrary bounded, continuous function $f$ on $[a,b]$ also proves that $\pi_x \Rightarrow \pi$ and, as we will use in the proof of (iv), $\pi_x \Rightarrow \delta_b$.)

 (iv) If $M[a, b] = \infty$, we verify that $\mu(b)$ is finite using proof by contradiction.  
First, observe that if $\mu(u) \to \infty$ as $u \to b$, then for any $K>0$, there is some $z_K$ such that for all $u \in [z_K,b)$, $\mu(u) \geq K$.  Recall $\mu(a) \geq 0$ and $\mu$ is continuous, so $|\mu| \leq K_1$ on $[a,z_K]$ for some $K_1 < \infty$.  Since $M[a,x] \to \infty$ as $x \to b$,
\begin{align*}
\lim_{x\to b} \ell(x) =\liminf_{x\to b} \int_a^x\mu(u)\, \frac{dM(u)}{M[a, x]} & = \liminf_{x\to b} \left( \int_a^{z_K} \mu(u)\, \frac{dM(y)}{M[a,x]} + \int_{z_K}^x \mu(u)\, \frac{dM(u)}{M[a,x]} \right) \\
& \geq \liminf_{x\to b} \left( \frac{-K_1\, M[a,z_K]}{M[a, x]} + \frac{K\, M[z_K, x]}{M[a,x]} \right) = K,
\end{align*}
which contradicts \eqref{limit=0}.  Thus $\mu(b)\neq \infty$.  A similar argument shows $\mu(b) \neq -\infty$ and therefore $\mu(b)$ is finite.  

Again for each $x \in \I$, define the probability measure $\pi_x$ on $[a,b]$ such that $\pi_x(G) = \int_G I_{[a,x]}(y)\, \frac{dM(y)}{M[a,x]}$ for $G \in \B[a,b]$.  Notice that $\lim_{x\to b} M[a,x] = \infty$ implies $\pi_x \Rightarrow \delta_b$ as $x \to b$.  Since $\mu$ is bounded and extends continuously on $[a,b]$ by assumption, it follows from weak convergence and \eqref{limit=0} that $ \mu(b) = \lim_{x \to b} \ell(x) = 0$.
\end{proof} 
 
\subsection{Proofs of Section \ref{sect-preliminaries}}\label{appen-pfs-sec3}
\begin{proof}[Proof of Lemma \ref{lem-limit:Bid/Bxi-a}]
(i) Since $a > -\infty$ is natural, 
  it follows from Table 6.2 on p.~234 of \cite{KarlinT81} that  \begin{equation}
\label{e-xi a=infty}
\lim_{w\to a} (\xi(y_{0}) - \xi(w)) = \int_{a}^{y_{0}} M(a, v] dS(v)   =  \infty, \ \ \forall y_{0}\in \I.
\end{equation} 
 Then  the first two limits in \eqref{e1:limit:Bid/Bxi-a} follow directly from   \eqref{e-xi a=infty}. 
   We now prove  the third limit. Since $a$ is non-attracting and natural, we can apply Proposition \ref{lem-mu-at-b}(i) and \eqref{e:xi':a} to derive $$ \lim_{x\to a} \frac{1}{\xi'(x)} =  \lim_{x\to a} \frac{1}{s(x) M(a, x]} = \mu(a) = 0.$$ 
   On the other hand,  the mean value theorem says that   $  \frac{y  -w}{ \xi(y ) - \xi(w)} = \frac{1}{\xi'(\theta)} 
$ for some $\theta$  between $w$ and $y$. Therefore, if $(w, y)\to (a, a)$, then $\theta \to a$. These observations  lead to the third limit of  \eqref{e1:limit:Bid/Bxi-a}.

(ii) The first two limits in \eqref{e:limit:Bid/Bxi-b} follow directly from \eqref{eq:xi_limit_b} if $b < \infty$, and  
 from L'H\^opital's Rule and \eqref{e-sM-infty} if $b = \infty$.  The last limit  in \eqref{e:limit:Bid/Bxi-b}  follows from the mean value theorem and \eqref{e-sM-infty}. 
\end{proof}

\begin{proof}[Proof of Lemma \ref{lem1-Bg/Bxi:a}]
For any $\e > 0$, thanks to the continuity of $c$, we can pick a $w_{\e} > a$ so that \begin{displaymath}
|c(w) - c(a)| < \e, \quad \forall w\in (a, w_{\e}].  
\end{displaymath} It then follows that for any $(w, y) \in \cR $ with $y\le w_{\e}$, we have \begin{equation}\label{e:Bg} \begin{aligned}
 (c(a) - \e)(\xi(y) - \xi(w)) & < g(y) - g(w)\\& = \int_{w}^{y} \int_{a}^{v} c(u) dM(u) dS(v) <    (c(a) + \e)(\xi(y) - \xi(w)). \end{aligned}
\end{equation} This gives  $ \lim_{(w,y)\to (a,a)} \frac{g(y) - g(w) }{\xi(y) - \xi(w)}= c(a)$ and hence establishes \eqref{e0:g/xi-limit:a}. 

Next we prove \eqref{e:g/xi-limit:a} when $a$ is a natural point. To this end, fix an  arbitrary  $y_{0}\in \I$.  
Conditions \ref{diff-cnd}(b) and \ref{c-cond} imply that $\int_{a}^{v} c(u) dM(u)$ is uniformly bounded on $v\in [w_{\e} \wedge y_0, w_{\e}\vee y_0] $
 This, in turn, implies that  $g(y_{0}) - g(w_{\e})$ is bounded. Similarly, the condition $M[a, y] < \infty$ for each $y \in \I$ implies that  $\xi(y_{0}) - \xi(w_{\e})$ is finite. Using these observations as well as \eqref{e-xi a=infty} and \eqref{e:Bg},  we can compute  
\begin{align*} 
 \limsup_{w\to a}\frac{g(y_{0}) - g(w) }{\xi(y_{0}) - \xi(w)} & =\limsup_{w\to a} \frac{g(y_{0}) - g(w_{\e}) +g(w_{\e}) - g(w)  }{\xi(y_{0}) - \xi(w_{\e}) +\xi(w_{\e}) - \xi(w)} \\
 & =  \limsup_{w\to a} \left(\frac{g(w_{\e}) - g(w) }{\xi(w_{\e}) - \xi(w)} + \frac{g(y_{0}) - g(w_{\e})}{\xi(w_{\e}) - \xi(w)}\right) \cdot \frac1{1+ \frac{\xi(y_{0}) - \xi(w_{\e})}{\xi(w_{\e}) - \xi(w)}}\\
 & \le  c(a) +\e,  
\end{align*}  and similarly \begin{displaymath}
 \liminf_{w\to a}\frac{g(y_{0}) - g(w) }{\xi(y_{0}) - \xi(w)} \ge c(a) -\e. 
\end{displaymath} Hence the limit $ \lim_{w\to a} \frac{g(y_{0}) - g(w) }{\xi(y_{0}) - \xi(w)} = c(a)$ holds. Finally, by talking  $y_{0} = x_{0}$, we have  $\lim_{w\to a} \frac{g(w) }{ \xi(w)} = c(a)$. The proof is complete. 
\end{proof}

\begin{proof}[Proof of Lemma \ref{lem2-Bg/Bxi:b}] 
Arbitrarily fix some $y_0 \in \I$. Since $c$ is   increasing with $c(b) > c(a) \ge 0$, we can find some $y_{1} > y_{0} $ so that $c(x) \ge c(y_{1}) > 0$ for all $x\in [y_{1}, b)$. Then for all $y\ge y_{1}$, 
\begin{align*} 
g(y) - g(y_{0}) & \ge g(y) - g(y_{1})  = \int_{y_{1}}^{y} \int_{a}^{v} c(x)dM(x) dS(v)  \\
 & \ge \int_{y_{1}}^{y}  \int_{y_{1}}^{v} c(x)dM(x) dS(v) \ge c(y_{1}) \int_{y_{1}}^{y} M[y_{1}, v] dS(v); 
\end{align*} the last expression converges to $\infty$ as $y\to b$ thanks to \eqref{eq:xi_limit_b}. Therefore we can apply  L'H\^opital's rule  to obtain \begin{equation}\label{e1-lem35-pf}
\lim_{y\to b} \frac{g(y) - g(y_{0})}{\xi(y) - \xi(y_{0})} = \lim_{y\to b}    \frac{s(y)  \int_{a}^{y}  c(u) dM(u)}{s(y) M[a, y]} =  \lim_{y\to b} \frac{ \int_{a}^{y}  c(u) dM(u)}{ M[a, y]}.
\end{equation} If $M[a, b] =\infty$, again using the fact that $c(x) \ge c(y_{1}) > 0$ for all $x\in [y_{1}, b)$, we have $\lim_{y\to b}  \int_{a}^{y}  c(u) dM(u) = \infty$. Therefore another application of L'H\^opital's rule leads to \begin{equation}\label{e1.1-lem35-pf}
  \lim_{y\to b} \frac{ \int_{a}^{y}  c(u) dM(u)}{ M[a, y]} =  \lim_{y\to b} \frac{c(y) m (y)}{m(y)} = c(b). 
\end{equation}  If $M[a, b] <\infty,  $   we can choose a $y_{2} \in (y_1,   b)$ so that $1\le \frac{M[a, b]}{M[a, y]} \le 2$ for all $y\ge y_{2}$. Then for $y\ge y_{2}$, we have   \begin{align}\label{e1.2-lem35-pf} \nonumber
  \frac{ \int_{a}^{y}  c(u) dM(u)}{ M[a, y]}&  = \int_{a}^{b}c(u) \frac {M[a, b]}{M[a, y]}I_{\{u\in [a, y]\}} \frac{ dM(u)}{M[a, b]}\\ & =  \int_{a}^{b}c(u) \frac {M[a, b]}{M[a, y]}I_{\{u \in [a, y]\}} \pi(du) \to  \int_{a}^{b} c(u) \pi(du) = \langle c, \pi\rangle ;
\end{align}   as $y\to b$, where we  used the dominated convergence theorem to derive the convergence in the second line of \eqref{e1.2-lem35-pf}. Plugging \eqref{e1.1-lem35-pf} and \eqref{e1.2-lem35-pf} into \eqref{e1-lem35-pf} gives us the limit \begin{equation}\label{e2-lem35-pf}
\lim_{y\to b} \frac{g(y) - g(y_{0})}{\xi(y) - \xi(y_{0})}  = \bar c(b) = \begin{cases}
 c(b)     & \text{ if  }M[a, b] =\infty, \\
   \langle c, \pi\rangle    & \text{ if } M[a, b] < \infty.
\end{cases}
\end{equation} Taking $y_{0} = x_{0}$ in \eqref{e2-lem35-pf} yields the first limit $\lim_{y\to b} \frac{g(y)}{\xi(y) } = \bar c(b)$ in  \eqref{e1:g/xi-limit:b}.  

It remains to show that $ \frac{g(y) - g(w) }{\xi(y) - \xi(w)}$ converges to the same limit $\bar c(b) $ as $(w, y) \to (b, b)$.   Recall that $\xi'(x)> 0$ for all $x\in \I$. 
 Hence we can apply the generalized mean value theorem 
  and the expressions for $\xi'$ and $g'$ in \eqref{e:xi-derivatives} and \eqref{e:g-derivatives} respectively to compute
\begin{align} \label{e:Bg/Bxi-bb}
 \frac{g(y) - g(w) }{\xi(y) - \xi(w)} & = \frac{g'(\theta)}{\xi'(\theta)}
  =\frac{ \int_{a}^{\theta} c(u) dM(u)}{M[a,  \theta]},
\end{align} 
where $\theta\in (w, y)$. In particular, we have $\theta\to b$ when     $(w,y) \to (b, b)$. As a result, using the same arguments as those for \eqref{e1.1-lem35-pf} and \eqref{e1.2-lem35-pf},  we see that the last expression of \eqref{e:Bg/Bxi-bb} converges to $\bar c(b)$ as $(w,y) \to (b, b)$. 
Therefore all limits in \eqref{e1:g/xi-limit:b} are  established. The proof is complete.   
\end{proof}

\subsection{Proof of Proposition \ref{prop-Fmax}}  \label{sect:Appen-B}

\begin{proof} 
We follow the approach in \cite{helm:18} to prove this proposition. 
Let $(\wdt w, \wdt y) \in \cR$ be as in Condition \ref{cond-interior-max}.    
We will show  that the value of $F$ on the boundary of $\cR$ is less than $F(\wdt w, \wdt y) $. Therefore it follows that the maximum value of $F$ is achieved at some point  $(w^*,y^*)\in \cR$. In case $a$ is an entrance boundary, it possible that the maximizing pair $(w^*,y^*)$ of $F$ satisfies $w^{*} =a$. The first order optimality condition then leads to    \eqref{e-1st-order-condition} or \eqref{e2-1st-order-condition}. 

We now spell out the detailed arguments. 
\begin{enumerate}
  \item[(i)] {\em The diagonal line $y=w$}.  
  
  Since $  K > 0$, we obviously   have $$\lim_{y-w\to 0, (w, y)\in \cR } F(w, y) =-\infty .$$ 
  \item[(ii)] {\em The boundary $y = b$, excluding $(a, b)$ when $a$ is natural}. 
  
  Fix an arbitrary  $w\in \I$. 
Thanks to Lemmas \ref{lem2-Bg/Bxi:b} and \ref{lem-limit:Bid/Bxi-a}, we have  
\begin{displaymath}
 \lim_{y\to b}  F(w, y) =    \lim_{y\to b} \frac{\gamma Bg(w, y)}{B\xi(w, y)} + \lim_{y\to b} \frac{p(y-w)- K}{\xi(y) - \xi(w)}  = \gamma \bar c(b). 
\end{displaymath} 

  \item[(iii)] {\em The vertex $(b, b)$}.  
  
Note that \begin{align*} 
\limsup_{(w,y) \to ( b, b)} \frac{ p(y-w) - K}{\xi(y) - \xi(w)}  \le \limsup_{(w,y) \to ( b, b)} \frac{p(y-w)}{\xi(y) - \xi(w)} =0,
\end{align*} 
where  the last equality follows from \eqref{e:limit:Bid/Bxi-b}.  Consequently, as in the previous case, this and Lemma~\ref{lem2-Bg/Bxi:b} imply that 
\begin{displaymath}
 \limsup_{(w,y) \to ( b, b)}  F(w, y) \le \gamma \bar c(b). 
\end{displaymath}

\item[(iv)] {\em The left boundary $w=a$, including the vertex $(a, a)$ when $a$ is a natural boundary}.

First we note that 
$$\limsup_{(w,y) \to (a, a),(w,y)\in \cR}  \frac{p(y-w)- K}{\xi(y) - \xi(w)}  \le 0. $$
 Also recall the limit \eqref{e0:g/xi-limit:a}.   Thus we have
\begin{displaymath}
\limsup_{(w,y) \to (a, a),(w,y)\in \cR} F(w, y) \le \gamma c(a). 
\end{displaymath}

Next, we fix some $y\in \I$ and assume $a$ is natural.  Then it follows from   Lemmas \ref{lem1-Bg/Bxi:a} and \ref{lem-limit:Bid/Bxi-a} that  
\begin{align*} 
\limsup_{w \downarrow a } F(w, y)& \le \limsup_{w \downarrow a }  \frac{\gamma(g(y) - g(w))}{\xi(y) - \xi(w)} + \limsup_{w \downarrow a }  \frac{p(y-w)- K}{ \xi(y) - \xi(w) } = \gamma c(a).  
\end{align*}
 When $a$ is an entrance boundary, $\xi(a): = \lim_{w\downarrow a} \xi(w) > -\infty$ thanks to Table 6.2 of \cite{KarlinT81}. On the other hand, for any $y\in \I$ fixed, since $c \ge 0$, the monotone convergence theorem implies that $\lim_{w \downarrow a} [g(y) - g(w)] = g(y)- g(a)$. Note also that   $a > -\infty$. Therefore  we have $$\lim_{w \downarrow a } F(w, y)= \frac{\gamma(g(y) - g(a)) + p(y-a) -K}{\xi(y) -\xi(a)}.$$ 
 It is possible  that an optimizing pair $(w^{*}, y^{*})$ of $F$ satisfies $w^{*} =a$. This possibility is accounted for in \eqref{e-Fmax} since $a\in \E$ and so $(a, y^{*})\in \cR$.

  \item[(v)] {\em The vertex $(a, b)$ when $a$ is a natural boundary}.   
  
  We fix an arbitrary  positive number $2 \e < F(\wdt w, \wdt y) -  \gamma \bar c(b)  $. 
    Consider the case when $b =\infty$; the other cases when $b< \infty$ can be proved by similar  arguments. 
    For this chosen $\e$,  let $R: = \frac{3}{2 \e }$.  Thanks to  \eqref{e-sM-infty}, there exists a $y_{0} \in (a,  b)$ so that 
\begin{displaymath}
\xi(y) - \xi(y_{0})  = \xi'(\theta) (y-y_{0}) = s(\theta) M[a, \theta]  (y-y_{0}) \ge p R    (y-y_{0}), 
\end{displaymath} 
for all $y > y_{0}$, where $\theta$ is between $y_{0}$ and $y$. Consequently for all $y> y_{0} \vee \frac{a+ 1 + K}{p}$ and $a < w < (a+1)\wedge y_{0}$, we have 
\begin{align*} 
   \frac{p(y-w)-  K}{\xi(y) - \xi(w) }   &=  \frac{p(y-w)- K}{\xi(y) - \xi(y_{0}) + \xi(y_{0})-\xi(w)} \le \frac{p(y-a)- K}{p R   (y-y_{0})} .\end{align*}
Furthermore, since $\lim_{y\to\infty} \frac{p(y-a)-  K}{pR(y-y_{0})} =\frac1R$, we can find a $\wdt y_{0} >  y_{0} \vee \frac{a+ 1 +  K}{p} $ so that   \begin{equation}\label{e1:region-v} \frac{p(y-w)-   K}{\xi(y) - \xi(w) }   \le \frac{p(y-a)-  K}{pR(y-y_{0})} < \frac{3}{2 R } = \e,  
 \end{equation}  for all $(w, y) \in \cR $ with $w \in (a, (a+ 1)\wedge y_{0})$ and $y > \wdt y_{0}$.   
 
 Using Lemmas \ref{lem1-Bg/Bxi:a} and \ref{lem2-Bg/Bxi:b}, we can find $\wdh w_{0} \in (a, (a+ 1)\wedge y_{0})$ and $\wdh y_{0}\in (\wdt y_{0}, b)$ so that \begin{align*} 
\gamma (g(y) - g(x_{0}))  & \le  
  (\gamma \bar c(b) + \e) (\xi (y) - \xi(x_{0})),     \ \quad  \text{ for all } y \ge \wdh y_{0}, \\
 \gamma(g(x_{0}) - g(w)) & \le (\gamma c(a) + \e) (\xi(x_{0}) - \xi(w) ),  \quad \text{ for all } w \le \wdh w_{0}. 
\end{align*} 
Consequently for all $(w, y)\in \cR$ satisfying $w \le \wdh w_{0}$ and $y \ge \wdh y_{0}$, we have 
 \begin{align} \label{e2:region-v}
\frac{\gamma(g(y) - g(w))}{\xi(y) - \xi(w)}  & = \frac{\gamma(g(y) - g(x_{0}) )+   \gamma(g(x_{0})- g(w))}{\xi(y) -\xi(x_{0}) + \xi(x_{0})-  \xi(w)}  \le \gamma\bar c(b)\vee \gamma c(a) + \e =\gamma \bar c(b) +\e.
\end{align} 
A combination of \eqref{e1:region-v} and \eqref{e2:region-v} then gives us 
\begin{displaymath}
F(w, y) = \frac{\gamma(g(y) - g(w))}{\xi(y) - \xi(w)} +  \frac{p(y-w)-  K}{\xi(y) - \xi(w) }  < \gamma \bar c(b) + 2\e < F (\wdt w, \wdt y) 
\end{displaymath} 
for all $(w, y)\in \cR$ with $w \le \wdh w_{0}$ and $y \ge \wdh y_{0}$.
\end{enumerate}
Finally combining all the above cases, we see that the maximum value of $F$ must occur at some point $(w^{*}, y^{*} ) \in \cR$; this establishes  \eqref{e-Fmax}. 
If $(w^{*}, y^{*})$ is an interior extreme point, we have $\frac{\partial F(w^{*}, y^{*}}{\partial w}) = \frac{\partial F(w^{*}, y^{*})}{\partial y} =0$; these equations lead to \eqref{e-1st-order-condition}. 
 If $a$ is an entrance point and  $(w^{*}, y^{*}) = (a, y^{*})$, then we have $\frac{\partial F(w^{*}, y^{*})}{\partial w} \le 0 = \frac{\partial F(w^{*}, y^{*})}{\partial y} $; these equations lead to \eqref{e2-1st-order-condition}.
\end{proof} 

\def\cprime{$'$} \def\polhk#1{\setbox0=\hbox{#1}{\ooalign{\hidewidth
  \lower1.5ex\hbox{`}\hidewidth\crcr\unhbox0}}}

\end{document}